\newcommand{\halfplane}{\reals_+\times\reals}
\newcommand{\reals}{\mathbb{R}} 
\newcommand{\rats}{\mathbb{Q}} 
\newcommand{\ints}{\mathbb{Z}}
\newcommand{\E}[1]{\mathbb{E}\left[#1\right]} 
\newcommand{\Et}[1]{\tilde{\mathbb{E}}\left[#1\right]} 
\newcommand{\EP}[2]{\mathbb{E}_{#1}\left[#2\right]}   
\newcommand{\setsF}{\mathcal{F}} 
\newcommand{\nat}{\mathbb{N}}
\newcommand{\Prob}[1]{\mathbb{P}\left(#1\right)} 
\newcommand{\Probt}[1]{\tilde{\mathbb{P}}\left(#1\right)} 
\newcommand{\salg}{$\sigma$-algebra} 
\newcommand{\acd}{ACD}
\newcommand{\support}[1]{{\rm Supp}(#1)}
\newcommand{\msupport}[1]{{\rm MSupp}(#1)}
\newcommand{\slbrack}{{[\hspace*{-0.15em}[}}  
\newcommand{\srbrack}{{]\hspace*{-0.15em}]}}  
\newcommand{\PP}{\mathbb{P}}
\newcommand{\QQ}{\mathbb{Q}}
\newcommand{\D}{{\rm\bf D}}
\newcommand{\borel}{\mathcal{B}}
\newtheorem{definition}{Definition}[section]
\newtheorem{theorem}[definition]{Theorem}
\newtheorem{lemma}[definition]{Lemma}
\newtheorem{corollary}[definition]{Corollary}
\begin{document}

\begin{frontmatter}

\title{Limits of One Dimensional Diffusions}
\runtitle{Limits of Diffusions}

\begin{aug}
\author{\fnms{George} \snm{Lowther}}
\runauthor{George Lowther}
\end{aug}

\begin{abstract}
In this paper we look at the properties of limits of a sequence of real valued inhomogeneous diffusions. When convergence is only in the sense of finite-dimensional distributions then the limit does not have to be a diffusion. However, we show that as long as the drift terms satisfy a Lipschitz condition and the limit is continuous in probability, then it will lie in a class of processes that we refer to as the \emph{almost-continuous diffusions}. These processes are strong Markov and satisfy an `almost-continuity' condition.
We also give a simple condition for the limit to be a continuous diffusion.

These results contrast with the multidimensional case where, as we show with an example, a sequence of two dimensional martingale diffusions can converge to a process that is both discontinuous and non-Markov.
\end{abstract}

\begin{keyword}[class=AMS]
\kwd[Primary ]{60J60}
\kwd{60G44}
\kwd[; secondary ]{60F99}
\end{keyword}

\begin{keyword}
\kwd{diffusion}
\kwd{martingale}
\kwd{strong Markov}
\kwd{finite-dimensional distributions}
\end{keyword}

\end{frontmatter}

\section{Introduction}

Suppose that we have a sequence of one dimensional diffusions, and that their finite-dimensional distributions converge.
The aim of this paper is to show that, under a Lipschitz condition for the drift components of the diffusions, then the limit will lie in a class of processes that is an extension of the class of diffusions, which we refer to as the \emph{almost-continuous diffusions}. Furthermore, we give a simple condition on this limit in order for it to be a continuous diffusion.

One way that an inhomogeneous diffusion can be defined is by an SDE
\begin{equation}\label{eqn:SDE}
dX_t = \sigma(t,X_t)\,dW_t + b(t,X_t)\,dt
\end{equation}
where $W$ is a Brownian motion. Under certain conditions on $\sigma$ and $b$, such as Lipschitz continuity, then it is well known that this SDE will have a unique solution (see \citep{Protter} V.3, \citep{Revuz} IX.2, \citep{Rogers} V.11). Furthermore, whenever the solution is unique then $X$ will be a strong Markov process (see \citep{Protter} V.6, \citep{Rogers} V.21). More generally, we can consider all possible real valued and continuous strong Markov processes.

We now ask the question, if we have a sequence $X^n$ of such processes whose finite-dimensional distributions converge, then does the limit have to be a continuous and strong Markov process?
In general, the answer is no. There is no reason that the limit should either be continuous or be strong Markov.
In the case of tight sequences (under the topology of locally uniform convergence) then convergence of the finite-dimensional distributions is enough to guarantee convergence under the weak topology, and the limits of continuous processes under the weak topology are themselves continuous (see \citep{Revuz} Chapter XIII or \citep{HeWangYan} Chapter 15).

However, we shall look at the case where the finite-dimensional distributions converge, but do not place any tightness conditions on the processes. In fact, we shall only place a Lipschitz condition on the increasing part of $b(t,x)$ (w.r.t. $x$) for processes given by the SDE (\ref{eqn:SDE}), and place no conditions at all on $\sigma(t,x)$. We further generalize to processes that do not necessarily satisfy an SDE such as (\ref{eqn:SDE}), but only have to satisfy the strong Markov property and a continuity condition.

In this case there is no need for the limit of continuous processes to be continuous, as we shall see later in a simple example. However, in the main result of this paper, we show that as long as the limit is continuous in probability, then it will be strong Markov and satisfy a pathwise continuity condition --- which we shall refer to as being \emph{almost-continuous}. Furthermore, under simple conditions on the limit, then it can be shown to be a continuous process.

The extension of continuous one dimensional diffusions that we require is given by the \emph{almost-continuous diffusions} that we originally defined in \citep{Lowther1}. 

\begin{definition}\label{defn:acd}
Let $X$ be a real valued stochastic process. Then,
\begin{enumerate}
\item $X$ is \emph{strong Markov} if for every bounded, measurable $g:\reals\rightarrow\reals$ and every $t\in\reals_+$ there exists a measurable $f:\halfplane\rightarrow\reals$ such that
\begin{displaymath}
f(\tau,X_\tau) = \E{g(X_{\tau+t})\mid \setsF_\tau}
\end{displaymath}
for every finite stopping time $\tau$.

\item $X$ is almost-continuous if it is cadlag, continuous in probability and given any two independent, identically distributed cadlag processes $Y,Z$ with the same distribution as $X$ and for every $s<t\in\reals_+$ we have
\begin{equation*}
\Prob{Y_s<Z_s,\,Y_t>Z_t{\rm\ and\ }Y_u\not= Z_u\textrm{ for every }u\in(s,t)}=0
\end{equation*}

\item $X$ is an \emph{almost-continuous diffusion} if it is strong Markov and almost-continuous.
\end{enumerate}
\end{definition}

We shall often abbreviate `almost-continuous diffusion' to \acd.
Note that the almost-continuous property simply means that $Y-Z$ cannot change sign without passing through zero, which is clearly a property of continuous processes.
In \citep{Lowther1} we applied coupling methods to prove that conditional expectations of functions of such processes satisfy particularly nice properties, such as conserving monotonicity and, in the martingale case, Lipschitz continuity and convexity. These methods were originally used by \citep{Hobson} in the case of diffusions that are a unique solution to the SDE (\ref{eqn:SDE}). As the results in this paper show, almost-continuous diffusions arise naturally as limits of continuous diffusions, and our method of proof will also employ similar coupling methods.
Furthermore, in a future paper we shall show that, subject to a Lipschitz constraint on the drift component, any almost-continuous diffusion is a limit of continuous diffusions (under the topology of convergence of finite-dimensional distributions).

We now recall that the weak topology on the probability measures on $\left(\reals^d,\borel(\reals^d)\right)$ is the smallest topology making the map $\mu\mapsto \mu(f)$ continuous for every bounded and continuous $f:\reals^d\rightarrow\reals$.
In particular, a sequence $(\mu_n)_{n\in\nat}$ of probability measures on $\reals^n$ converges weakly to a measure $\mu$ if and only if
\begin{equation*}
\mu_n(f)\rightarrow\mu(f)
\end{equation*}
for every bounded and continuous $f:\reals^d\rightarrow\reals$.

Now, suppose that we have real valued stochastic processes $(X^n)_{n\in\nat}$ and $X$, possibly defined on different probability spaces. Then, for any subset $S$ of $\reals_+$, we shall say that $X^n$ converges to $X$ in the sense of finite-dimensional distributions on $S$ if and only if for every finite subset $\{t_1,t_2,\ldots,t_d\}$ of $S$ then the distributions of $(X^n_{t_1},X^n_{t_2},\ldots,X^n_{t_d})$ converges weakly to the distribution of $(X_{t_1},X_{t_2},\ldots,X_{t_d})$.

We shall use the space of cadlag real valued processes (Skorokhod space) on which to represent the probability measures, and use $X$ to represent the coordinate process.
\begin{align*}
&\D=\left\{\textrm{cadlag functions }\omega:\reals_+\rightarrow\reals\right\},\\
&X:\reals_+\times\D\rightarrow\reals,\ (t,\omega)\mapsto X_t(\omega)\equiv \omega(t),\\
&\setsF=\sigma\left(X_t:t\in\reals_+\right),\\
&\setsF_t=\sigma\left(X_s:s\in[0,t]\right).
\end{align*}
Then, $(\D,\setsF)$ is a measurable space and $X$ is a cadlag process adapted to the filtration $\setsF_t$.

With these definitions, a sequence $\PP_n$ of probability measures on $(\D,\setsF)$ converges to $\PP$ in the sense of finite-dimensional distributions on a set $S\subseteq\reals_+$ if and only if
\begin{equation*}
\EP{\PP_n}{f(X_{t_1},X_{t_2},\ldots,X_{t_d})}\rightarrow\EP{\PP}{f(X_{t_1},X_{t_2},\ldots,X_{t_d})}
\end{equation*}
as $n\rightarrow\infty$ for every finite $\{t_1,t_2,\ldots,t_d\}\subseteq S$ and every continuous and bounded $f:\reals^d\rightarrow\reals$.

We now state the main result of this paper.

\begin{theorem}\label{thm:limit of acd is acd}
Let $(\PP_n)_{n\in\nat}$ be a sequence of probability measures on $(\D,\setsF)$ under which $X$ is an almost-continuous diffusion. Suppose that there exists a $K\in\reals$ such that, for every $n\in\nat$, the process $X$ decomposes as
\begin{equation*}
X_t=M^n_t+\int_0^t b_n(s,X_s)\,ds
\end{equation*}
where $M^n$ is an $\setsF_\cdot$-local martingale under $\PP_n$ and $b_n:\halfplane\rightarrow\reals$ is locally integrable and satisfies
\begin{equation*}
b_n(t,y)-b_n(t,x)\le K(y-x)
\end{equation*}
for every $x<y\in\reals$ and every $t\in\reals_+$.

If $\PP_n\rightarrow\PP$ in the sense of finite-dimensional distributions on a dense subset of $\reals_+$, and $X$ is continuous in probability under $\PP$, then it is an almost-continuous diffusion under $\PP$.
\end{theorem}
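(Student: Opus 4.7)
Since $\PP$ is a measure on $\D$, the coordinate process is automatically cadlag under $\PP$, and continuity in probability is assumed. Only the strong Markov property and the almost-continuity condition of Definition \ref{defn:acd} require verification. My plan is to transfer two structural features from each $\PP_n$ to $\PP$ via the f.d.d.\ convergence on $S$: first, monotonicity of conditional expectations of increasing functions, which follows from the one-sided Lipschitz drift bound via the Hobson-type coupling of \citep{Lowther1}; second, the vanishing of the bad sign-change event for two independent copies, which is the ACD property of each $\PP_n$.

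\textbf{Strong Markov property.} Fix $s_1<\dots<s_j\le s<t$ in $S$ and a bounded increasing continuous $g$, and set $\phi_n(x)=\EP{\PP_n}{g(X_t)\mid X_s=x}$. The Lipschitz-drift coupling of \citep{Lowther1} makes $\phi_n$ bounded and increasing, with $\|\phi_n\|_\infty\le\|g\|_\infty$. Helly's selection theorem yields a subsequence $\phi_{n_k}\to\phi$ pointwise off an at-most-countable set, with $\phi$ bounded and increasing. The Markov property of each $\PP_n$ combined with f.d.d.\ convergence then gives
\begin{equation*}
\EP{\PP}{g(X_t)\prod_i h_i(X_{s_i})}
=\lim_k\EP{\PP_{n_k}}{\phi_{n_k}(X_s)\prod_i h_i(X_{s_i})}
=\EP{\PP}{\phi(X_s)\prod_i h_i(X_{s_i})},
\end{equation*}
where in the last step the exceptional set of $\phi$ carries no mass after removing at most a countable further set of problem times from $S$ (permitted since $S$ is dense). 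A monotone class argument extends this to bounded measurable $g$, and continuity in probability lifts the Markov identity from $S$ to $\reals_+$. The strong Markov form in Definition \ref{defn:acd} then follows from Markov plus cadlag plus continuity of transitions in $t$ (itself a consequence of continuity in probability plus the monotonicity of transition kernels), via approximation of $\tau$ by simple stopping times.

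\textbf{Almost-continuity, and the main obstacle.} For independent copies $(Y,Z)$ under $\PP\otimes\PP$ and $s<t$ in $S$, define the f.d.d.-event
\begin{equation*}
A_F=\{Y_s<Z_s,\,Y_t>Z_t,\,Y_u\ne Z_u\ \forall u\in F\},\qquad F\subset(s,t)\cap S\text{ finite}.
\end{equation*}
F.d.d.\ convergence on a finite set yields $\PP\otimes\PP(A_F)=\lim_n\PP_n\otimes\PP_n(A_F)$. Because each $\PP_n$ is ACD, applying its strong Markov property at the first $u\in F$ where $Y-Z$ flips sign (and iterating $F=F_m$ increasing to a dense subset of $(s,t)\cap S$) forces $\PP_n\otimes\PP_n(A_{F_m})\to 0$ as $m\to\infty$. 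The hardest step of the proof is to make this convergence quantitative and uniform in $n$: since the common constant $K$ governs every $\PP_n$, the coupling estimates on the transitions can be made $n$-independent, producing an $\epsilon_m\downarrow 0$ with $\PP_n\otimes\PP_n(A_{F_m})\le\epsilon_m$ for all $n$. This uniformity permits the interchange $\lim_m\lim_n\PP_n\otimes\PP_n(A_{F_m})=\lim_m\PP\otimes\PP(A_{F_m})=0$, giving almost-continuity restricted to times in $S$. A final cadlag-regularity step then upgrades this to the full pathwise condition: right-continuity of $Y-Z$ reduces the set of zeros at times in $(s,t)\setminus S$ to a null event, using the Markov property and continuity in probability already established.
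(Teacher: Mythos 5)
There are two genuine gaps, and the first is fatal to the almost-continuity half of your argument. You claim that, for fixed $n$, $\PP_n\otimes\PP_n(A_{F_m})\rightarrow 0$ as $F_m$ increases to a countable dense subset of $(s,t)\cap S$. This is false. Almost-continuity of $X$ under $\PP_n$ only forces the two paths to \emph{touch} at some time in $(s,t)$ before the sign change; it does not force them to be equal at any time in a prescribed countable set. Take $Y,Z$ independent Brownian motions: the zero set of $Y-Z$ is a closed Lebesgue-null set, and for each fixed $u$ one has $\Prob{Y_u=Z_u}=0$, so almost surely $Y_u\ne Z_u$ for \emph{every} $u$ in any fixed countable set, even on the event that the paths cross. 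Hence $\PP_n\otimes\PP_n(A_{F_m})$ decreases not to $0$ but to $\PP_n\otimes\PP_n(Y_s<Z_s,\,Y_t>Z_t)$, and no discretization in time, uniform in $n$ or not, can witness the touching. This is precisely why the paper does not try to express the pathwise condition as a limit of finite-dimensional events; instead it encodes almost-continuity in the two-point coupling inequality (\ref{eqn:lemma:finite dist of ac process:1}) of Lemma \ref{lemma:finite dist of ac process} (a genuine statement about the distributions at two times, comparing ``crossing'' and ``non-crossing'' probabilities), shows that inequality survives f.d.d.\ limits (Lemma \ref{lemma:cond2 preserved under lim}, Corollary \ref{cor:cond2 preserved under dense lim}), and then reconstructs the pathwise property from the inequality via first-crossing stopping times, the marginal support, and the no-simultaneous-jumps Lemma \ref{lemma:no simultaneous jumps}. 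That reconstruction (Lemmas \ref{lemma:finite dist of ac 2 pathwise}--\ref{lemma:mgale never crosses msupp then ac:increasing times}) is the real content and is absent from your proposal.

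The strong Markov half also has a gap. With only monotonicity of $\phi_n$, Helly gives subsequential pointwise limits off a countable set, but you cannot conclude $\EP{\PP_{n_k}}{\phi_{n_k}(X_s)\prod_i h_i(X_{s_i})}\rightarrow\EP{\PP}{\phi(X_s)\prod_i h_i(X_{s_i})}$: the localization step requires an equicontinuity bound of the form $\left|\theta(X_s)\left(\phi_n(X_s)-\phi_n(x)\right)\right|\le\epsilon\,\theta(X_s)$ for $\theta$ supported near $x$, which fails for merely monotone $\phi_n$ that may jump inside the window; and the exceptional set is a set of space points possibly charged by the law of $X_s$, which removing times from $S$ does not cure. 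This is exactly why the paper works with the Lipschitz property (Definition \ref{defn:lipschitz prop}), obtained from the drift bound by the coupling argument of Lemma \ref{lemma:acd gives lip} applied to $e^{-Kt}X_t$: the uniform $1$-Lipschitz bound is what makes the limit $f$ identifiable on the support and the convergence $\EP{\PP_n}{|f_n(X_s)-f(X_s)|}\rightarrow 0$ provable (Lemma \ref{lemma:lim everywhere of lip prop gives lip}). Finally, your passage from the Markov property at fixed times to the strong Markov property rests on an unproved claim that transition kernels are continuous in $t$; the paper itself warns that Markov but non-strong-Markov diffusions can converge to non-Markov limits, and its Lemma \ref{lemma:lipschitz prop gives sm} derives the needed right-continuity of $f(t,x)$ in $t$ on the marginal support from the uniform Lipschitz continuity in $x$ --- again the regularity your monotonicity-only approach lacks.
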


The proof of this result will be left until Sections \ref{sec:ac} and \ref{sec:sm}.
Note that in the special case where $K=0$ then the condition simply says that $b_n(t,x)$ is decreasing in $x$. Furthermore, Theorem \ref{thm:limit of acd is acd} reduces to the following simple statement in the martingale case.

\begin{corollary}\label{cor:limit of acd mart is acd}
Let $(\PP_n)_{n\in\nat}$ be a sequence of probability measures on $(\D,\setsF)$ under which $X$ is an \acd\ martingale. If $\PP_n\rightarrow\PP$ in the sense of finite-dimensional distributions on a dense subset of $\reals_+$ and $X$ is continuous in probability under $\PP$, then it is an almost-continuous diffusion under $\PP$.
\end{corollary}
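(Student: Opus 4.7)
The plan is to obtain Corollary \ref{cor:limit of acd mart is acd} as a direct specialization of Theorem \ref{thm:limit of acd is acd}, which the paper has just stated. Since the hypotheses of the corollary are strictly weaker in content (the martingale assumption eliminates the drift), the strategy is simply to verify that the decomposition required by the theorem is trivially available in this setting.

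First I would produce the decomposition $X_t = M^n_t + \int_0^t b_n(s,X_s)\,ds$ by taking $M^n = X$ and $b_n \equiv 0$ for every $n \in \nat$. Under $\PP_n$ the process $X$ is assumed to be an \acd\ martingale, hence $M^n = X$ is an $\setsF_\cdot$-martingale and in particular an $\setsF_\cdot$-local martingale under $\PP_n$. The drift $b_n \equiv 0$ is obviously locally integrable, and the Lipschitz-type bound
\begin{equation*}
b_n(t,y) - b_n(t,x) \le K(y-x)
\end{equation*}
holds with $K = 0$ for every $x < y$ in $\reals$ and every $t \in \reals_+$, the left-hand side being identically zero while the right-hand side is non-negative. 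Crucially, this choice of $K$ is independent of $n$, as required by the theorem.

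Having verified the structural hypothesis of Theorem \ref{thm:limit of acd is acd} with a common $K = 0$, the remaining hypotheses (convergence of finite-dimensional distributions on a dense subset of $\reals_+$ and continuity in probability of $X$ under $\PP$) are carried over verbatim from the statement of the corollary. Applying the theorem then yields that $X$ is an almost-continuous diffusion under $\PP$, which is the conclusion required.

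There is no genuine obstacle in this reduction; all the substantive analytic and probabilistic work is encapsulated inside Theorem \ref{thm:limit of acd is acd} and deferred to Sections \ref{sec:ac} and \ref{sec:sm}. The corollary serves mainly to highlight how clean the martingale case is: one does not need to invoke the Lipschitz hypothesis on any drift, and the almost-continuity plus strong Markov property of the limit follow solely from the convergence of finite-dimensional distributions together with continuity in probability of the limit.
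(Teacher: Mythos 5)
Your reduction is exactly the one the paper intends: the corollary is stated as the martingale specialization of Theorem \ref{thm:limit of acd is acd}, obtained by taking $M^n=X$, $b_n\equiv 0$ and $K=0$, and all remaining hypotheses transfer verbatim. The proposal is correct and follows the same route as the paper.
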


We can also give a simple condition on the measure $\PP$ from Theorem \ref{thm:limit of acd is acd} and Corollary \ref{cor:limit of acd mart is acd} in order for $X$ to be a continuous process. Recall that the support of the real valued random variable $X_t$ is the smallest closed subset $C$ of the real numbers such that $\Prob{X_t\in C}=1$.

\begin{lemma}\label{lemma:ac connected supp is cont}
Let $X$ be an almost-continuous process. If the support of $X_t$ is connected for every $t$ in $\reals_+$ outside of a countable set then $X$ is continuous.
\end{lemma}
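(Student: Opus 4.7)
The approach is to argue the contrapositive: assume $X$ has the connected-support property but admits a pathwise jump with positive probability, and derive a violation of Definition~\ref{defn:acd}(2). A countable reduction over rational levels first produces rationals $a$, $\eta>0$, and $T>0$ such that the first-jump stopping time
$\tau:=\inf\{u\ge 0: X_{u-}\le a-\eta,\ X_u\ge a+\eta\}$
satisfies $\PP(\tau\le T)>0$; the down-jump case is handled symmetrically. Continuity in probability gives $\PP(\tau=t)\le\PP(X_{t-}\ne X_t)=0$ for each $t$, so the law of $\tau$ is atomless, and $\tau$ lies almost surely in the complement of the countable set $B\subset\reals_+$ on which $\mathrm{supp}(X_t)$ fails to be connected. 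Writing $\mu_u$ for the law of $X_u$ and $C_u:=\mathrm{supp}(\mu_u)$, on $\{\tau\le T\}$ the set $C_\tau$ is therefore a closed interval in $\reals$.

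Next, carry two independent copies $Y,Z$ of $X$ on a product space, and let $\tau^Y$ denote the above stopping time applied to $Y$. Since $Z$ is independent of $Y$ and continuous in probability, $Z_{\tau^Y-}=Z_{\tau^Y}$ a.s. Suppose $\omega$ satisfies $\tau^Y(\omega)\le T$ and $Z_{\tau^Y}(\omega)\in(Y_{\tau^Y-}(\omega),Y_{\tau^Y}(\omega))$, and set $\alpha:=Z_{\tau^Y}-Y_{\tau^Y-}>0$, $\beta:=Y_{\tau^Y}-Z_{\tau^Y}>0$. By cadlag oscillation control, there exists $\delta>0$ (depending on $\omega$) such that every pair of rationals $s<t$ with $\tau^Y-\delta<s<\tau^Y<t<\tau^Y+\delta$ satisfies $Y_s<Z_s$, $Y_t>Z_t$, and $Y_u\ne Z_u$ for every $u\in(s,t)$. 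Almost-continuity gives probability zero to any such rational configuration, so a countable union yields $\PP(\tau^Y\le T,\ Z_{\tau^Y}\in(Y_{\tau^Y-},Y_{\tau^Y}))=0$; conditioning on $Y$ (using $Z\perp Y$ and $Z\sim X$) then gives
\[
\mu_{\tau^Y}\bigl((Y_{\tau^Y-},Y_{\tau^Y})\bigr)=0 \quad \text{a.s.\ on } \{\tau^Y\le T\}.
\]

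Because $(Y_{\tau^Y-},Y_{\tau^Y})\supseteq(a-\eta,a+\eta)$, the connected interval $C_{\tau^Y}$ is disjoint from $(a-\eta,a+\eta)$ on $\{\tau^Y\le T\}$. Granted the claim that $Y_{\tau^Y-}$ and $Y_{\tau^Y}$ themselves lie in $C_{\tau^Y}$, the interval $C_{\tau^Y}$ would then contain both $Y_{\tau^Y-}\le a-\eta$ and $Y_{\tau^Y}\ge a+\eta$ and hence the open interval $(a-\eta,a+\eta)$; any nonempty open subset of the support carries positive $\mu_{\tau^Y}$-mass, contradicting the preceding display. The chief obstacle is therefore the claim $Y_{\tau^Y-},Y_{\tau^Y}\in C_{\tau^Y}$ at the random time $\tau^Y$. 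For deterministic $t\in\reals_+\setminus B$ this is immediate since $Y_t$ and $Y_{t-}$ both have law $\mu_t$, but at the random stopping time one must approximate: take rationals $q_n\downarrow\tau^Y$ in $\rats\cap(\reals_+\setminus B)$, establish $Y_{q_n}\in C_{q_n}$ a.s.\ by conditioning on the countably many values of $q_n$, and then pass to the limit using the weak convergence $\mu_{q_n}\to\mu_{\tau^Y}$ and the interval structure of $C_{\tau^Y}$ to rule out $Y_{\tau^Y}$ escaping to the Kuratowski upper limit of the $C_{q_n}$ (which can strictly exceed $C_{\tau^Y}$ in general). This limiting step carries essentially all the technical difficulty; the remainder is a clean coupling.
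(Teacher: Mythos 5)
Your overall strategy mirrors the paper's: at a jump time show that the open interval $(X_{\tau-},X_{\tau})$ carries no mass under the law of $X_{\tau}$ while both endpoints lie in $\support{X_{\tau}}$, so that $\support{X_{\tau}}$ is disconnected and $\tau$ must fall in the countable exceptional set, which is then ruled out because the law of $\tau$ is atomless. The pieces you do carry out are sound: the rational-level reduction, the atomlessness of $\tau$, the no-simultaneous-jumps step, and the coupling argument giving $\mu_{\tau^Y}\left((Y_{\tau^Y-},Y_{\tau^Y})\right)=0$ (which in effect re-proves one direction of Lemma \ref{lemma:never crosses MSupp cond}).

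The genuine gap is exactly the claim you flag and defer: $Y_{\tau^Y-},Y_{\tau^Y}\in\support{X_{\tau^Y}}$ at the \emph{random} time $\tau^Y$. This is not a routine limiting step, and the route you sketch does not close it. From $Y_{q_n}\in C_{q_n}$, $Y_{q_n}\rightarrow Y_{\tau^Y}$ and $\mu_{q_n}\rightarrow\mu_{\tau^Y}$ weakly, all one obtains is that $\mu_{q_n}$ assigns \emph{positive} mass to a small ball around $Y_{q_n}$; this is perfectly compatible with $\limsup_n\mu_{q_n}(F)\le\mu_{\tau^Y}(F)=0$ for a closed neighbourhood $F$ of $Y_{\tau^Y}$ lying outside $C_{\tau^Y}$, since the positive masses need not be bounded away from zero (compare $\mu_n=(1-n^{-1})\delta_0+n^{-1}\delta_1\rightarrow\delta_0$, where $1\in\support{\mu_n}$ for every $n$). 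So no contradiction arises and the limit passage fails as stated. The paper does not prove this claim within the present argument either: it imports it as Lemma \ref{lemma:process contained in msupp} (Lemmas 4.3 and 4.4 of \citep{Lowther1}), which is the stronger pathwise statement that with probability one $(t,X_t)$ and $(t,X_{t-})$ lie in $\msupport{X}$ for \emph{all} $t$ simultaneously --- a statement that then applies automatically at any random time, the left-limit half using continuity in probability. Since you yourself identify this step as carrying ``essentially all the technical difficulty'', the proposal as written is not a complete proof. If you instead invoke Lemma \ref{lemma:process contained in msupp}, the remainder of your argument goes through and is essentially the paper's proof with the marginal-support machinery unpacked into an explicit coupling.
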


The proof of this result is left until the end of Section \ref{sec:ac}, and follows quite easily from the properties of the \emph{marginal support} of a process, which we studied in \citep{Lowther1}.

The results above (Theorem \ref{thm:limit of acd is acd} and Corollary \ref{cor:limit of acd mart is acd}) are, in a sense, best possible. Certainly, it is possible for a continuous and Markov (but non-strong Markov) martingale to converge to a process that is neither almost-continuous nor Markov. Similarly, a strong Markov but discontinuous martingale can converge to a process that is not Markov.
Furthermore, these results do not extend in any obvious way to multidimensional diffusions --- in Section \ref{sec:examples} we shall construct an example of a sequence of continuous martingale diffusions taking values in $\reals^2$, and which converge to a discontinuous and non-Markov process.

Now, suppose that we have any sequence of probability measures $\PP_n$ on $(\D,\setsF)$ under which $X$ is an almost-continuous diffusion.
In order to apply Theorem \ref{thm:limit of acd is acd} we would need to be able to pass to a subsequence whose finite-dimensional distributions converge.
It is well known that if the sequence is tight with repect to the Skorokhod topology, then it is possible to pass to a subsequence that converges weakly with respect to this topology (see \citep{Revuz} Chapter XIII or \citep{HeWangYan} Chapter 15). We do not want to restrict ourselves to this situation. Fortunately, it turns out that under fairly weak conditions on $X$ then it is possible to pass to a subsequence that converges in the sense of finite-dimensional distributions. This follows from the results in \citep{Meyer}, where they consider convergence under a topology that is much weaker than the Skorokhod topology, but is still strong enough to give convergence of the finite-dimensional distrubutions in an almost-everywhere sense.

By `convergence almost everywhere' in the statement of the result below, we mean that there is an $S\subseteq\reals_+$ such that $\reals_+\setminus S$ has zero Lebesgue measure and $\PP_{n_k}\rightarrow\PP$ in the sense of finite-dimensional distributions on $S$. In particular, $S$ must be a dense subset of $\reals_+$. Recall that we are working under the natural filtration $\setsF_\cdot$ on Skorokhod space $(D,\setsF)$.

\begin{theorem}\label{thm:ae convergent subsequence}
Let $(\PP_n)_{n\in\nat}$ be a sequence of probability measures on $(\D,\setsF)$ under which $X$ has the decomposition
\begin{equation*}
X=M^n+A^n,
\end{equation*}
where $M^n$ is a cadlag $\PP_n$-martingale and $A^n$ is an adapted process with locally finite variation. Suppose further that for every $t\in\reals_+$ the sequence
\begin{equation*}
\EP{\PP_n}{|X_t|}+\EP{\PP_n}{\int_0^t\,|dA^n_s|}
\end{equation*}
is finite and bounded.

Then, there exists a subsequence $(\PP_{n_k})_{k\in\nat}$ and a measure $\PP$ on $(\D,\setsF)$ such that $\PP_{n_k}\rightarrow\PP$ in the sense of finite-dimensional distributions almost everywhere on $\reals_+$.
\end{theorem}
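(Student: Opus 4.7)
The plan is to apply the Meyer--Zheng tightness theorem for sequences of quasimartingales in the pseudo-path topology (\citep{Meyer}), which is precisely the tool designed to yield finite-dimensional convergence at almost every time from $L^1$-type bounds that are much weaker than Skorokhod tightness. First I would verify the Meyer--Zheng hypotheses. On every finite interval $[0,T]$, the decomposition $X=M^n+A^n$ exhibits $X$ as a quasimartingale under $\PP_n$ whose conditional variation is controlled by $\EP{\PP_n}{\int_0^T|dA^n_s|}$, and combined with the uniform bound on $\EP{\PP_n}{|X_T|}$, the stated hypothesis supplies exactly the uniform $L^1$-quasimartingale bound that Meyer and Zheng require on each compact interval.

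Second, I would apply their tightness theorem on $[0,T]$ for each $T\in\nat$ and then extract a diagonal subsequence $(\PP_{n_k})$ converging in the pseudo-path topology on $\reals_+$ to a limit law $\PP$. Because uniform bounds on conditional variation pass to pseudo-path limits, the coordinate process remains a quasimartingale under $\PP$ on every compact interval, and Rao's regularization theorem then guarantees a cadlag modification, so $\PP$ can be realized as a measure on $(\D,\setsF)$.

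Third, I would invoke the standard property of the pseudo-path topology that $\PP_{n_k}\rightarrow\PP$ implies weak convergence of the one-dimensional marginals of $X_t$ for every $t$ outside the at-most-countable set $N$ of discontinuity points of the map $t\mapsto\PP\circ X_t^{-1}$ (which is countable since each marginal is a probability measure on $\reals$). Applying the same fact to $d$-tuples shows that the $d$-dimensional marginals converge whenever none of the $t_i$ lies in $N$. Taking $S=\reals_+\setminus N$, which is co-countable and hence has full Lebesgue measure, delivers the claimed almost-everywhere convergence in finite-dimensional distributions.

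The main obstacle I anticipate is technical rather than conceptual: the pseudo-path topology identifies measurable functions that agree Lebesgue-almost-everywhere, so one must verify that the limit really lives on Skorokhod space $(\D,\setsF)$ rather than on this quotient. This reduces to the preservation of the quasimartingale property under Meyer--Zheng limits, for which the uniform bound on $\EP{\PP_n}{\int_0^t|dA^n_s|}$ is essential; once this step is in place, the remainder of the argument is a direct reading of Meyer and Zheng's results together with Rao's theorem.
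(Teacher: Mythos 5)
Your proposal rests on the same engine as the paper---the Meyer--Zheng tightness and finite-dimensional-convergence theorems for processes of bounded conditional variation under the pseudo-path topology---so the core of the argument is sound. The genuine difference is in how you pass from bounds that are only uniform on each compact interval to a single application of the tightness theorem on all of $\reals_+$: you propose applying the theorem on each $[0,T]$, $T\in\nat$, and extracting a diagonal subsequence, which forces you to check consistency of the limit laws across intervals and to reassemble them into one measure on $(\D,\setsF)$; the paper instead multiplies $X$ by a deterministic cadlag weight $\theta(t)=\sum_k 2^{-k}(L_k+1)^{-1}1_{\{t<k\}}$ so that the rescaled process $Z=\theta(t)X_t$ has conditional variation at most $1$ uniformly in $n$ on all of $\reals_+$, applies Theorems 4 and 5 of \citep{Meyer} once, and recovers $X=\theta(t)^{-1}Z_t$. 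The weighting trick buys you a cleaner argument with no diagonalization or consistency step, and it sidesteps your worry about realizing the limit on Skorokhod space, since that is handled inside Meyer--Zheng's Theorem 4 for the single process $Z$. One point in your third step needs correction: convergence in law under the pseudo-path topology does \emph{not} by itself give convergence of the marginals of $X_t$ at every $t$ outside the countable set of discontinuity points of $t\mapsto\PP\circ X_t^{-1}$ (that is the Skorokhod-topology statement); Meyer--Zheng's Theorem 5 only yields, after passing to a \emph{further} subsequence, a set of full Lebesgue measure on which the finite-dimensional distributions converge. Since the theorem only claims almost-everywhere convergence along a subsequence, this weaker conclusion suffices, but you should cite the correct mechanism and include the extra subsequence extraction.
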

\begin{proof}
We use the results from \citep{Meyer} for tightness under the pseudo-path topology of a sequence of processes with bounded conditional variation.

For every $k\in\nat$ define the process $Y^k_t \equiv 1_{\{t<k\}}X_t$. Then the conditional variation of $Y^k$ under the measure $\PP_n$ satisfies
\begin{equation*}
V_n(Y^k)\le \EP{\PP_n}{|X_k|}+\EP{\PP_n}{\int_0^k\,|dA^n_s|}
\end{equation*}
which is bounded over all $n\in\nat$ by some constant $L_k$. Now define the process
\begin{equation*}
Z_t = \sum_{k=1}^\infty 2^{-k}\left(L_k+1\right)^{-1}Y^k_t= \theta(t)X_t
\end{equation*}
where $\theta$ is the cadlag function
\begin{equation*}
\theta(t) = \sum_{k=1}^\infty 2^{-k}\left(L_k+1\right)^{-1}1_{\{t<k\}}.
\end{equation*}
Then, the conditional variation of $Z$ satisfies
\begin{equation*}
V_n(Z)\le\sum_{k=1}^\infty 2^{-k}\left(L_k+1\right)^{-1}V_n(Y^k)\le\sum_{k=1}^\infty 2^{-k}=1.
\end{equation*}
So, by Theorem 4 of \citep{Meyer}, there exists a subsequence $(\PP_{n_k})_{k\in\nat}$ under which the laws of the process $Z$ converge weakly (w.r.t. the pseudo-path topology) to the law of $Z$ under a probability measure $\PP$.
Then by Theorem 5 of \citep{Meyer}, we can pass to a further subsequence such that the finite-dimensional distributions of $Z$ converge almost everywhere to those under $\PP$. Finally, as $X_t=\theta(t)^{-1}Z_t$, we see that the finite-dimensional distributions of $X$ also converge almost everywhere.
\end{proof}

These results give us a general technique that can be used to construct almost-continuous diffusions whose finite-dimensional distributions satisfy a desired property. That is, we first construct a sequence of almost-continuous diffusions whose distributions satisfy the required property in the limit. Then, we can appeal to Theorem \ref{thm:ae convergent subsequence} in order to pass to a convergent subsequence and use Theorem \ref{thm:limit of acd is acd} to show that the limit is an almost-continuous diffusion.
This is a method that we shall use in a later paper in order to construct \acd\ martingales with prescribed marginal distributions.

\section{Examples}
\label{sec:examples}

We give examples demonstrating how the convergence described  in Theorem \ref{thm:limit of acd is acd} behaves, and in particular show how a continuous diffusion can converge to a discontinuous process satisfying the almost-continuity condition.

We then give an example showing that Theorem \ref{thm:limit of acd is acd} and Corollary \ref{cor:limit of acd mart is acd} do not extend to multidimensional diffusions.

\subsection{Convergence to a reflecting Brownian motion}
We construct a simple example of continuous martingale diffusions converging to a reflecting Brownian motion. Consider the SDE
\begin{align}
&dX^n_t = \sigma(X^n_t)\,dW_t,\label{eqn:refl bm}\\
&\sigma_n(x) = \max(1,-nx)\nonumber
\end{align}
for each $n\in\nat$, with $X^n_0=0$. Here, $W$ is a standard Brownian motion. As $\sigma_n$ are Lipschitz continuous functions, these SDEs have a unique solution and $X^n$ will be strong Markov martingales. In particular, they will be almost-continuous diffusions. We shall show that they converge to a reflecting Brownian motion.

The SDE (\ref{eqn:refl bm}) can be solved by a time change method, where we first choose any Brownian motion $B$ and define the processes
\begin{align}
&A^n_t = \int_0^t\sigma_n(B_s)^{-2}\,ds,\label{eqn:time change defs}\\
&T^n_t = \inf\left\{T\in\reals_+:A^n_T>t\right\}.\nonumber
\end{align}
Then the process
\begin{equation*}
X^n_t=B_{T^n_t}
\end{equation*}
gives a weak solution to SDE (\ref{eqn:refl bm}). We can take limits as $n\rightarrow\infty$,
\begin{equation*}
A^n_t\rightarrow A_t\equiv\int_0^t1_{\left\{B_s\ge 0\right\}}\,ds.
\end{equation*}
If we now use $A$ to define the time change,
\begin{align}
&T_t = \inf\left\{T\in\reals_+:A_T>t\right\},\label{eqn:X lim time change defn}\\
&X_t = B_{T_t}\nonumber
\end{align}
then $X$ is a Brownian motion with the negative excursions removed, and so is a reflecting Brownian motion.

For every $t\in\reals_+$ we have $X_t>0$ (a.s.) and so $A$ is strictly increasing in a neighbourhood of $t$. Therefore, $T^n_t\rightarrow T_t$. This shows that $X^n_t\rightarrow X_t$ (a.s.), so the processes $X^n$ do indeed converge to $X$ in the sense of finite-dimensional distributions.
However, it does not converge weakly with respect to the topology of locally uniform convergence. In fact, the minimum of $X^n$ over any interval does not converge weakly to the minimum of $X$.
\begin{equation*}
\inf_{s\le t}X^n_s=\inf_{s\le T^n_t}B_s\rightarrow\inf_{s\le T_t} B_s<0 = \inf_{s\le t}X_s
\end{equation*}
for every $t>0$. This example shows that a limit of martingale diffusions need not be a martingale. However, note that the support of $X_t$ is $[0,\infty)$ for any positive time $t$, and $X$ has no drift over any interval that it does not hit $0$. This is true more generally --- whenever a process is a limit of one dimensional martingale diffusions, then it will behave like a local martingale except when it hits the edge of its support.

\subsection{Convergence to a symmetric Poisson process}
We show how continuous diffusions can converge to a discontinuous process, such as the symmetric Poisson process. By `symmetric Poisson process' with rate $\lambda$, we mean a process with independent increments whose jumps occur according to a standard Poisson process with rate $\lambda$ and such that the jump sizes are independent and take the values $1$ and $-1$, with positive and negative jumps equally likely. Alternatively, it is the difference of two independent Poisson processes with rate $\lambda/2$.

If $X$ is a symmetric Poisson process with $X_0=0$, then it follows that the support of $X_t$ is $\ints$ for every positive time $t$ and it is easy to show that it satisfies the almost-continuous property.

We now let $\sigma_n:\reals\rightarrow\reals$ be positive Lipschitz continuous functions such that $\sigma_n^{-2}$ converges to a sum of delta functions at each integer point. For example, set
\begin{equation}\label{eqn:sn defn for approx poisson}
\sigma_n(x) = \left(\pi/n\right)^{1/4}\left(\sum_{k=-\infty}^\infty\exp(-n(x+k)^2)\right)^{-1/2}
\end{equation}
In particular, this gives
\begin{equation}\label{eqn:s-2 goes to deltas}
\int f(x)\sigma_n(x)^{-2}\,dx\rightarrow\sum_{k\in\ints}f(k)
\end{equation}
as $n\rightarrow\infty$, for all continuous functions $f$ with compact support.
We now consider the SDE
\begin{equation}\label{eqn:sde approx poisson}
dX^n_t=\sigma_n(X^n_t)\,dW_t
\end{equation}
where $W$ is a standard Brownian motion, and $X^n_0=0$. As $\sigma_n$ is Lipschitz continuous, $X^n$ will be an \acd\ martingale. We can solve this SDE by using a time changed Brownian motion, in the same way as for the previous example.
So, let $B$ be a standard Brownian motion and $A^n_t$, $T^n_t$ be defined by equations (\ref{eqn:time change defs}). Then $X^n_t=B_{T^n_t}$ solves SDE (\ref{eqn:sde approx poisson}).

If we let $L^a_t$ be the semimartingale local time of $B$ at $a$, then it is jointly continuous in $t$ and $a$ and Tanaka's formula gives
\begin{equation*}
A^n_t = \int L^a_t \sigma_n(a)^{-2}\,da.
\end{equation*}
Equation (\ref{eqn:s-2 goes to deltas}) allows us to take the limit as $n$ goes to infinity,
\begin{equation*}
A^n_t\rightarrow A_t\equiv\sum_{a\in\ints}L^a_t.
\end{equation*}
Then, $A$ will be constant over any time interval for which $B\not\in\ints$ and it follows that if we define $T_t$ and the time changed process $X$ by equations (\ref{eqn:X lim time change defn}) then the support of $X_t$ will be contained in $\ints$ for every time $t$. In fact, $X$ will be a symmetric Poisson process.

As in the previous example, we have $T^n_t\rightarrow T_t$ as $n\rightarrow\infty$ (a.s.). Therefore $X^n_t\rightarrow X_t$ (a.s.) for every $t\in\reals_+$, showing that the continuous martingale diffusions converge to the discontinuous process $X$.

\subsection{A discontinuous and non-Markov limit of multidimensional martingale diffusions}
We give an example of a sequence of $2$-dimensional continuous diffusions converging to a discontinuous and non-Markov process. This shows that Theorem \ref{thm:limit of acd is acd} and Corollary \ref{cor:limit of acd mart is acd} do not extend to the multidimensional case in any obvious way.
To construct our example, first define the Lipschitz continuous function $f:\reals\rightarrow\reals$ by
\begin{equation*}
f(x) = \min\left\{|x-k|:k\in\ints\right\}.
\end{equation*}
Now let $U$ be a normally distributed random variable with mean $0$ and variance $1$ (any random variable with support equal to $\reals$ and absolutely continuous distribution will do). Also, let $\sigma_n$ be as in the previous example, defined by equation (\ref{eqn:sn defn for approx poisson}). Consider the SDE
\begin{align*}
&dY^n_t=f(nZ^n_t)\sigma_n(Y^n_t)\,dW_t,\\
&dZ^n_t=0
\end{align*}
where $Y^n_0=0$ and $Z^n_0=U$, and $W$ is a standard Brownian motion.
As $f(nx)\sigma_n(x)$ is Lipschitz continuous, the processes $(Y^n,Z^n)$ will be strong Markov martingales.

It is easy to solve this SDE. Let $X^n$ be the processes defined in the previous example. Then a solution is given by
\begin{align*}
&Y^n_t=f(nU)X^n_t,\\
&Z^n_t=U.
\end{align*}

From the previous example, we know that $X^n$ converges to a symmetric Poisson process $X$ in the sense of finite-dimensional distributions.
Also, from the definition of $f$, $f(nU)$ will converge weakly to the uniform distribution on $[0,1]$. So, let $V$ be a random variable uniformly distributed on $[0,1]$ and suppose that $X$, $V$ and $U$ are independent. Setting
\begin{align*}
&Y_t=V X_t,\\
&Z_t=U
\end{align*}
then $(Y^n,Z^n)\rightarrow(Y,Z)$ in the sense of finite-dimensional distributions. This process is both discontinuous and non-Markov, showing that the results of this paper do not extend to two dimensional processes.

In fact, I conjecture that for $d>1$ \emph{any} $d$-dimensional cadlag stochastic process is a limit of martingale diffusions in the sense of finite-dimensional distributions, and for $d>2$ any such process is a limit of homogeneous martingale diffusions.

\section{Almost-Continuity}
\label{sec:ac}

We split the proof of Theorem \ref{thm:limit of acd is acd} into two main parts. First, in this section, we show that the limit is an almost-continuous process, and we leave the proof that it is strong Markov until later.
The main result that we shall prove in this section is the following.
\begin{lemma}\label{lemma:lim of acd is ac}
Let $(\PP_n)_{n\in\nat}$ be a sequence of probability measures on $(\D,\setsF)$ under which $X$ is an almost-continuous diffusion.
If $\PP_n\rightarrow\PP$ in the sense of finite-dimensional distributions on a dense subset of $\reals_+$ and $X$ is continuous in probability under $\PP$, then it is almost-continuous under $\PP$.
\end{lemma}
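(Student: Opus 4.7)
The plan is to lift the problem to the product space $(\D\times\D,\setsF\otimes\setsF)$, whose two coordinate projections $Y,Z$ are independent copies of $X$ under the product measures $\QQ_n=\PP_n\otimes\PP_n$ and $\QQ=\PP\otimes\PP$. Independence ensures that $\QQ_n\to\QQ$ in the sense of finite-dimensional distributions on the same dense set $S\subseteq\reals_+$. Because $X$ is automatically cadlag on $\D$ and continuity in probability under $\PP$ is given, the whole task reduces to verifying the coupling identity
\begin{displaymath}
\QQ(A(s,t))=0,\qquad A(s,t)=\{Y_s<Z_s,\,Y_t>Z_t,\,\phi_u\ne 0\textrm{ for every }u\in(s,t)\}
\end{displaymath}
for each $s<t\in\reals_+$, where $\phi=Y-Z$. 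Continuity in probability of $\PP$ allows us to approximate arbitrary $s,t$ from inside $(s,t)$ by points of $S$, so without loss of generality we take $s,t\in S$.

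Fix a countable dense $D\subseteq S\cap(s,t)$. The key cadlag observation is that, for every $\delta>0$,
\begin{displaymath}
\{|\phi_u|>\delta\textrm{ for every }u\in D\}\subseteq\{\phi_u\ne 0\textrm{ for every }u\in(s,t)\},
\end{displaymath}
since every $u\in(s,t)$ is a right-limit of points of $D$ and $\phi$ is right-continuous. Defining the open finite-dimensional events
\begin{displaymath}
E^{\epsilon,\delta}_F=\{Y_s<Z_s-\epsilon,\,Y_t>Z_t+\epsilon,\,|\phi_u|>\delta\textrm{ for every }u\in F\},\quad F\subseteq D\textrm{ finite},
\end{displaymath}
and their decreasing intersection $E^{\epsilon,\delta}=\bigcap_F E^{\epsilon,\delta}_F$, the inclusion above gives $E^{\epsilon,\delta}\subseteq A(s,t)$, so almost-continuity of $\PP_n$ forces $\QQ_n(E^{\epsilon,\delta})=0$ for every $n$. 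The remaining task is to conclude $\QQ(E^{\epsilon,\delta})=0$ for each $\epsilon,\delta>0$; sending $\epsilon,\delta\downarrow 0$, combined with continuity in probability of $\PP$ to absorb boundary contributions in $\{Y_s=Z_s\}\cup\{Y_t=Z_t\}$, will then recover $\QQ(A(s,t))=0$.

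The main obstacle is precisely this last step. The portmanteau inequality yields only $\QQ(E^{\epsilon,\delta}_F)\le\liminf_n\QQ_n(E^{\epsilon,\delta}_F)$ for each fixed finite $F$, and $\QQ_n(E^{\epsilon,\delta}_F)$ is in general not zero: it only decreases to $\QQ_n(E^{\epsilon,\delta})=0$ as $F\uparrow D$, with a rate that may depend on $n$. To surmount this I would exploit the extra structure of the almost-continuous diffusions $\PP_n$ established in \citep{Lowther1}: their transition kernels preserve stochastic ordering, a monotone Markov property that can be recast as a pointwise inequality between the two-time joint marginals of $X$ which is itself equivalent to the almost-continuity coupling condition. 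Since finite-dimensional convergence on $S$ entails weak convergence of all two-time marginals at pairs in $S\times S$, that marginal reformulation passes directly to $\PP$, bypassing the problematic $F\uparrow D$ limit and delivering $\QQ(E^{\epsilon,\delta})=0$.
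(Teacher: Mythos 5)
Your overall strategy is the right one, and it matches the paper's in outline: isolate a two\-time marginal inequality that is implied by the almost\-continuous diffusion property, pass it through the finite\-dimensional limit, and then recover pathwise almost\-continuity from it. You have also correctly diagnosed why the naive portmanteau argument fails. However, there are two genuine gaps. First, your approximation scheme does not cover the event $A(s,t)$: even granting $\QQ(E^{\epsilon,\delta})=0$ for all $\epsilon,\delta>0$, the union $\bigcup_{\epsilon,\delta}E^{\epsilon,\delta}$ only captures paths on which $\inf_{u\in D}|Y_u-Z_u|>0$, whereas on $A(s,t)$ the difference $Y-Z$ may approach zero without vanishing before $Y$ jumps over $Z$. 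This ``approach from below, then jump across'' scenario is exactly the hard case; the paper devotes Lemmas \ref{lemma:never crosses MSupp cond} and \ref{lemma:mgale never crosses msupp then ac:increasing times} (the marginal\-support machinery from \citep{Lowther1}) and the second half of the proof of Lemma \ref{lemma:finite dist of ac process} to excluding it, via the increasing stopping times $T_n\uparrow T$ and a careful analysis at the limit time. No choice of $\delta$ fixed in advance can see this event.

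Second, your proposed fix is asserted rather than proved. The relevant marginal inequality is statement \ref{item:lemma:ac cond} of Lemma \ref{lemma:finite dist of ac process} in the paper, and it is \emph{not} equivalent to the almost\-continuity coupling condition: the paper only establishes the one\-way chain (ACD) $\Rightarrow$ (marginal inequality) $\Rightarrow$ (almost\-continuous), where the first implication uses the strong Markov property essentially (Lemma \ref{lemma:acd gives finite dist of ac process}, via a coupling at the first crossing time) and the second requires continuity in probability and is the main technical content of Section \ref{sec:ac}. Neither implication is available off the shelf from \citep{Lowther1}. In addition, the passage of the marginal inequality to the limit along a dense subset $S$ is not just a matter of approximating $s,t$ by points of $S$: the inequality must hold for all $\setsF_s$\-measurable weights $U,V$, and the paper handles this with a right\-continuous time change $\theta_m$ mapping $\reals_+$ into $S$ followed by two successive limits (Lemma \ref{lemma:cond2 preserved under lim} and Corollary \ref{cor:cond2 preserved under dense lim}), together with a strict/non\-strict inequality approximation $a_1<a_2$, $b'<c'\le d'<e'$ to control the boundary terms under weak convergence. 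As written, your proposal identifies the correct skeleton but leaves both load\-bearing steps unestablished.
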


The method we use will be to reformulate the pathwise `almost-continuity' property into a condition on the finite distributions of $X$. The idea is that given real numbers (or more generally, subsets of the reals) $x<y$ and $x^\prime<y^\prime$ then a coupling argument can be used to show that the probability of $X$ going from $x$ to $y^\prime$ multiplied by the probability of going from $y$ to $x^\prime$ across a time interval $[s,t]$ is bounded by the probability of going from $x$ to $x^\prime$ multiplied by the probability of going from $y$ to $y^\prime$. The precise statement is as follows.

\begin{lemma}\label{lemma:finite dist of ac process}
Let $\PP$ be a probability measure on $(\D,\setsF)$ under which $X$ is continuous in probability. Then each of the following statements implies the next.
\begin{enumerate}
\item\label{item:lemma:acd} $X$ is an almost-continuous diffusion.
\item\label{item:lemma:ac cond} for every $s<t\in\reals_+$, non-negative $\setsF_s$-measurable random variables $U,V$, and real numbers $a$ and $b<c\le d<e$, then
\begin{equation}\label{eqn:lemma:finite dist of ac process:1}\begin{split}
&\E{U1_{\left\{X_s<a,\ d<X_t<e\right\}}}
\E{V1_{\left\{X_s>a,\ b<X_t<c\right\}}}\\
&\le
\E{U1_{\left\{X_s<a,\ b<X_t<c\right\}}}
\E{V1_{\left\{X_s>a,\ d<X_t<e\right\}}}.
\end{split}\end{equation}
\item $X$ is almost-continuous.
\end{enumerate}
\end{lemma}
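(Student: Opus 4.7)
\emph{Plan.} I would prove the chain $(1) \Rightarrow (2) \Rightarrow (3)$. Since the cadlag and continuity-in-probability clauses of the almost-continuous property in (3) are given hypotheses throughout, in $(2) \Rightarrow (3)$ only the no-crossing-without-meeting condition remains to be derived.

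\emph{For $(1)\Rightarrow(2)$.} I would work on a product space carrying two independent copies $Y, Z$ of $X$. The joint process $(Y, Z)$ inherits the strong Markov property, and from any diagonal state $(x, x)$ its law is symmetric under the exchange $Y \leftrightarrow Z$. Define
\[
\tau = \inf\{u > s : Y_u = Z_u\}
\]
and the post-$\tau$ swap $(\tilde Y, \tilde Z) = (Y, Z)$ on $[0, \tau]$, $(\tilde Y, \tilde Z) = (Z, Y)$ on $(\tau, \infty)$; the joint strong Markov property gives $(\tilde Y, \tilde Z) \stackrel{d}{=} (Y, Z)$. On the event $E = \{Y_s<a,\, Z_s>a,\, Y_t\in(d,e),\, Z_t\in(b,c)\}$ one has $Y_s<Z_s$ and $Y_t>Z_t$, so almost-continuity of $X$ applied to the pair $(Y,Z)$ forces $\tau \in (s,t)$ almost surely (and $\tau$ is attained because $Y-Z$ is cadlag, so $Y_\tau = Z_\tau$). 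After swapping, $(\tilde Y, \tilde Z)$ lies in the mirror event $E' = \{\tilde Y_s<a,\, \tilde Z_s>a,\, \tilde Y_t\in(b,c),\, \tilde Z_t\in(d,e)\}$; since $\tau > s$, the $\setsF_s$-measurable weights $U(Y)$ and $V(Z)$ (each a functional of the pre-$s$ path of a single copy) are preserved. Taking expectations and factorising each side by independence of $Y$ and $Z$ yields (2).

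\emph{For $(2)\Rightarrow(3)$.} With independent copies $Y, Z$, the target is $\Prob{A} = 0$ for
\[
A = \{Y_s<Z_s,\ Y_t>Z_t,\ Y_u \ne Z_u \text{ for every } u \in (s, t)\}.
\]
Choosing a countable dense $D \subseteq (s, t)$ of points at which $X$ is continuous in probability, the cadlag property gives $\Prob{A} \le \inf_F \Prob{A_F}$ where $F$ ranges over finite subsets of $D$ and $A_F$ only imposes non-meeting at the grid points of $F$. I would then decompose $A_F$ by the grid sign pattern $\sigma_i = \mathrm{sign}(Y_{t_i} - Z_{t_i})$, which starts at $-1$, ends at $+1$, and never vanishes, so must contain at least one rising crossing $\sigma_{k-1} = -1, \sigma_k = +1$. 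On such an interval $[t_{k-1}, t_k]$ I would apply (2) with weights $U(Y)$ and $V(Z)$ encoding the separable parts of the preceding sign constraints --- obtained by partitioning each joint inequality $Y_{t_j} \ne Z_{t_j}$ via rational separators $Y_{t_j} < a_j < Z_{t_j}$ or the reverse, so that each factor depends on only one copy. Each application converts a rising crossing into a non-crossing configuration; iterating through the pattern and summing over patterns and over the chosen separators bounds $\Prob{A_F}$ by a quantity that, using continuity in probability, is forced to zero as $F$ refines.

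\emph{Main obstacle.} The second direction is by far the harder. Inequality (2) is a single two-point statement whose weights must factor as a functional of $Y$ alone times a functional of $Z$ alone, whereas $A_F$ carries joint constraints $Y_{t_i} \ne Z_{t_i}$ that do not factor. The essential technical step is to partition $A_F$ over rational separating levels so that all constraints become product-measurable and (2) can be iterated consistently along the grid, and then to verify that the iterated upper bound genuinely vanishes using continuity in probability.
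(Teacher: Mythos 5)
Your first step, $(1)\Rightarrow(2)$, is essentially the paper's argument: both couple two independent copies at the first crossing time after $s$, use almost-continuity to force $Y_\tau=Z_\tau$ there, and exchange the roles of the two copies from $\tau$ onwards. (The paper implements the exchange through the strong Markov functions $f_i(\tau,X_\tau)=\E{g_i(X_t)\mid\setsF_\tau}$ rather than a literal path swap; this matters slightly, because Definition \ref{defn:acd} only provides single-time conditional laws, not the full post-$\tau$ path law your swap identity $(\tilde Y,\tilde Z)\overset{d}{=}(Y,Z)$ presumes --- but since you only evaluate at time $t$, the argument goes through in the paper's formulation.)

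The direction $(2)\Rightarrow(3)$, however, has a fatal gap. You propose to bound $\Prob{A}$ by $\Prob{A_F}$ for finite grids $F$ and then drive $\Prob{A_F}$ to zero as $F$ refines. This cannot work: for any process whose meeting set $\{u:Y_u=Z_u\}$ is Lebesgue-null (e.g.\ $X$ a Brownian motion, which certainly satisfies (2) and is almost-continuous), each constraint $Y_{t_i}\neq Z_{t_i}$ at a \emph{fixed} time removes only a null set, so $\inf_F\Prob{A_F}=\Prob{Y_s<Z_s,\,Y_t>Z_t,\,Y_u\neq Z_u\ \forall u\in D}=\Prob{Y_s<Z_s,\,Y_t>Z_t}>0$. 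No chain of inequalities can therefore bound $\Prob{A_F}$ by a quantity tending to zero, and the grid sign-pattern decomposition is looking at the wrong object: the event $A$ is distinguished from $\bigcap_F A_F$ precisely by what happens at the \emph{random} first crossing time, which no deterministic grid detects. The paper's route is to upgrade (2) to a statement at stopping times (Lemma \ref{lemma:finite dist of ac 2 pathwise}, proved by discretizing the stopping time from above and using right-continuity), apply it at the exit time of a region contained in $\{y<z\}$, and observe that the majorizing event there forces \emph{both} $Y$ and $Z$ to jump simultaneously, which Lemma \ref{lemma:no simultaneous jumps} rules out; this kills the case where $Y$ jumps over $Z$ from a strictly lower position. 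The remaining case --- $Y$ creeping up to $Z$ and then jumping past --- is the genuinely delicate one and requires the marginal-support machinery of Lemmas \ref{lemma:never crosses MSupp cond} and \ref{lemma:mgale never crosses msupp then ac:increasing times}, together with a second application of the stopping-time inequality along the approach times $T_n\uparrow T$. None of these ingredients is recoverable from your outline, so the second implication would need to be redone from scratch.
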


We shall split the proof of this lemma into several parts. The approach that we use is to consider two independent copies of $X$ and look at the first time that they cross. So, we start by defining the probability space on which these processes exist, which is just the product of $(\D,\setsF)$ with itself.
\begin{equation}\label{eqn:D2 F2}\begin{split}
&\D^2 = \D\times\D,\\
&\setsF^2 = \setsF\otimes\setsF.
\end{split}\end{equation}
Then we let $Y$ and $Z$ be the coordinate processes,
\begin{equation}\label{eqn:Y Z}\begin{split}
&Y,Z:\reals_+\times\D^2\rightarrow\reals,\\
&Y_t(\omega_1,\omega_2)\equiv X_t(\omega_1)=\omega_1(t),\\
&Z_t(\omega_1,\omega_2)\equiv X_t(\omega_2)=\omega_2(t).
\end{split}\end{equation}
We also write $\setsF^2_t$ for the filtration generated by $Y$ and $Z$, which is just the product of $\setsF_t$ with itself.
\begin{equation}\label{eqn:F2t}
\setsF^2_t\equiv\setsF_t\otimes\setsF_t=\sigma\left(Y_s,Z_s:s\in[0,t]\right).
\end{equation}
Given any probability measure $\PP$ on $(\D,\setsF)$ we denote the measure on $(\D^2,\setsF^2)$ formed by the product of $\PP$ with itself by $\tilde\PP$.
\begin{equation}\label{eqn:Ptilde}
\tilde\PP\equiv\PP\otimes\PP.
\end{equation}
In what follows, the notation $\Et{\cdot}$ will be used to denote expectations with respect to the measure $\tilde{\PP}$.
From these definitions, $Y$ and $Z$ are adapted cadlag processes, and under $\tilde\PP$ they are independent and identically distributed each with the same distribution as $X$ has under $\PP$. We now rewrite statement \ref{item:lemma:ac cond} of Lemma \ref{lemma:finite dist of ac process} in terms of the finite distributions of $Y$ and $Z$.

\begin{lemma}\label{lemma:finite dist ac 2 dbl}
Given any probability measure $\PP$ on $(\D,\setsF)$,
statement \ref{item:lemma:ac cond} of Lemma \ref{lemma:finite dist of ac process} is equivalent to the statement that for every $s<t\in\reals_+$ and real numbers $b<c\le d<e$ then,
\begin{equation}\label{lemma:finite dist ac 2 dbl:1}\begin{split}
&\Probt{Y_s<Z_s,\,b<Z_t<c,\,d<Y_t<e|\setsF^2_s}\\
&\le\Probt{Y_s<Z_s,\,b<Y_t<c,\,d<Z_t<e|\setsF^2_s}.
\end{split}\end{equation}
\end{lemma}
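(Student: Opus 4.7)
The plan is to prove the two implications separately, using the independence of $Y$ and $Z$ under $\tilde\PP$ to translate between product-form expectations on $(\D^2,\setsF^2)$ and ordinary expectations on $(\D,\setsF)$. For the easier direction---that (\ref{lemma:finite dist ac 2 dbl:1}) implies statement \ref{item:lemma:ac cond} of Lemma \ref{lemma:finite dist of ac process}---I would multiply (\ref{lemma:finite dist ac 2 dbl:1}) by the non-negative $\setsF^2_s$-measurable random variable $U(\omega_1)V(\omega_2)1_{\{Y_s<a\}}1_{\{Z_s>a\}}$ and take $\tilde\PP$-expectations on both sides. Since $\{Y_s<a,Z_s>a\}\subseteq\{Y_s<Z_s\}$ the crossing condition is absorbed, and factoring the two resulting $\tilde\PP$-expectations across the independent coordinates gives exactly (\ref{eqn:lemma:finite dist of ac process:1}).

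For the reverse direction, the conditional inequality is equivalent to the integrated statement
\begin{equation*}
\Et{W1_{\{Y_s<Z_s,\,b<Z_t<c,\,d<Y_t<e\}}}\le\Et{W1_{\{Y_s<Z_s,\,b<Y_t<c,\,d<Z_t<e\}}}
\end{equation*}
holding for every non-negative $\setsF^2_s$-measurable $W$, and a monotone class argument based on $\setsF^2_s=\setsF_s\otimes\setsF_s$ reduces this further to product indicators $W=1_{A_1}(\omega_1)1_{A_2}(\omega_2)$ with $A_1,A_2\in\setsF_s$. For each $n\in\nat$ I would partition $\reals$ into the half-open intervals $I_k^n=[k/n,(k+1)/n)$ and decompose
\begin{equation*}
1_{\{Y_s<Z_s\}}=\sum_{k<l}1_{\{Y_s\in I_k^n\}}1_{\{Z_s\in I_l^n\}}+R_n,
\end{equation*}
where $R_n$ is the indicator of those pairs with $Y_s<Z_s$ but $Y_s,Z_s$ in a common $I_k^n$, so $R_n\le 1_{\{|Y_s-Z_s|<1/n\}}$. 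Since $\bigcap_n\{|Y_s-Z_s|<1/n,\,Y_s<Z_s\}=\emptyset$, continuity of measure gives that the contribution of $R_n$ vanishes as $n\to\infty$. For each pair $k<l$ I would invoke the hypothesis with $U:=1_{A_1}1_{\{X_s\in I_k^n\}}$, $V:=1_{A_2}1_{\{X_s\in I_l^n\}}$ and $a:=(k+1)/n$; to handle a possible atom of $X_s$ at $a$ I would use the $\{X_s<a\},\{X_s\ge a\}$ variant of statement \ref{item:lemma:ac cond}, obtained from the monotone limit $a'=a-\epsilon\uparrow a$ by dominated convergence. The product form of $W$ combined with $\tilde\PP$-independence collapses the $(k,l)$ contribution on each side to a product of single-coordinate $\PP$-expectations, and the hypothesis supplies exactly the required termwise inequality. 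Summing over $k<l$ and letting $n\to\infty$ yields the integrated inequality.

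The main technical wrinkle is that $\{Y_s<Z_s\}$ is not naturally a countable disjoint union of product events $\{Y_s\in B_1,Z_s\in B_2\}$ with $B_1\le B_2$, because $X_s$ may have atoms and the values $Y_s,Z_s$ can be arbitrarily close. The diagonal remainder $R_n$ absorbs this discrepancy, and its vanishing as $n\to\infty$ via continuity of measure is what carries the argument. Note that continuity in probability of $X$ is not used anywhere, consistent with the fact that Lemma \ref{lemma:finite dist ac 2 dbl} does not assume it.
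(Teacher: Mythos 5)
Your proof is correct and follows essentially the same route as the paper's: the forward direction is identical, and for the converse both arguments decompose $\{Y_s<Z_s\}$ into countably many product strips separated by a level $a$, apply statement \ref{item:lemma:ac cond} strip by strip, let the mesh shrink so that the diagonal remainder vanishes, and then extend to general $\setsF^2_s$-measurable $W$ by a monotone class argument. The only cosmetic difference is that your half-open grid forces you to derive the $\{X_s<a\},\{X_s\ge a\}$ variant of the hypothesis by letting $a'\uparrow a$, whereas the paper uses the strips $\{(n-1)\epsilon\le Y_s<n\epsilon<Z_s\}$ with a strict cut at $n\epsilon$ and so never needs it.
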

\begin{proof}
First, suppose that inequality (\ref{lemma:finite dist ac 2 dbl:1}) holds. Choose $s<t\in\reals_+$ and real numbers $a$ and $b<c\le d<e$. Also choose non-negative $\setsF_s$-measurable random variables $U=u(X)$ and $V=v(X)$. Then the definition (\ref{eqn:Ptilde}) of $\tilde\PP$ together with inequality (\ref{lemma:finite dist ac 2 dbl:1}) gives
\begin{equation*}\begin{split}
&\E{U1_{\left\{X_s<a,\,d<X_t<e\right\}}}
\E{V1_{\left\{X_s>a,\,b<X_t<c\right\}}}\\
={}&\Et{u(Y)v(Z)1_{\left\{Y_s<a<Z_s\right\}}1_{\left\{d<Y_t<e\right\}}1_{\left\{b<Z_t<c\right\}}}\\
={}&\Et{u(Y)v(Z)1_{\left\{Y_s<a<Z_s\right\}}\Probt{Y_s<Z_s,\,b<Z_t<c,\,d<Y_t<e|\setsF^2_s}}\\
\le{}&\Et{u(Y)v(Z)1_{\left\{Y_s<a<Z_s\right\}}\Probt{Y_s<Z_s,\,b<Y_t<c,\,d<Z_t<e|\setsF^2_s}}\\
={}&\Et{u(Y)v(Z)1_{\left\{Y_s<a<Z_s\right\}}1_{\left\{b<Y_t<c\right\}}1_{\left\{d<Z_t<e\right\}}}\\
={}&\E{U1_{\left\{X_s<a,\,b<X_t<c\right\}}}
\E{V1_{\left\{X_s>a,\,d<X_t<e\right\}}}
\end{split}\end{equation*}
as required.

Conversely, suppose that statement \ref{item:lemma:ac cond} of Lemma \ref{lemma:finite dist of ac process} holds. Now choose $s<t\in\reals_+$, real numbers $a^\prime<a$ and $b<c\le d<e$ and bounded non-negative $\setsF_s$-measurable random variables $U=u(X)$ and $V=v(X)$.
Defining the $\setsF^2_s$-measurable random variable $W=u(Y)v(Z)$, then the definition (\ref{eqn:Ptilde}) of $\tilde\PP$ together with inequality (\ref{eqn:lemma:finite dist of ac process:1}) gives
\begin{equation*}\begin{split}
&\Et{W1_{\left\{a^\prime\le Y_s<a<Z_s,\,b<Z_t<c,\,d<Y_t<e\right\}}}\\
={}&\E{\left(u(X)1_{\left\{a^\prime\le X_s\right\}}\right)1_{\left\{X_s<a,\,d<X_t<e\right\}}}
\E{v(X)1_{\left\{a<X_s\right\}}1_{\left\{b<X_t<c\right\}}}\\
\le{}&\E{\left(u(X)1_{\left\{a^\prime\le X_s\right\}}\right)1_{\left\{X_s<a,\,b<X_t<c\right\}}}
\E{v(X)1_{\left\{a<X_s\right\}}1_{\left\{d<X_t<e\right\}}}\\
={}&\Et{W1_{\left\{a^\prime\le Y_s<a<Z_s,\,b<Y_t<c,\,d<Z_t<e\right\}}}.
\end{split}\end{equation*}
For any $\epsilon>0$ we can set $a^\prime=(n-1)\epsilon$ and $a=n\epsilon$ in this inequality and sum over $n$,
\begin{equation*}\begin{split}
&\Et{W1_{\left\{\exists n\in\ints\textrm{ s.t.\,}Y_s<n\epsilon<Z_s,\,b<Z_t<c,\,d<Y_t<e\right\}}}\\
={}&\sum_{n=-\infty}^\infty\Et{W1_{\left\{(n-1)\epsilon\le Y_s<n\epsilon<Z_s,\,b<Z_t<c,\,d<Y_t<e\right\}}}\\
\le{}&\sum_{n=-\infty}^\infty\Et{W1_{\left\{(n-1)\epsilon\le Y_s<n\epsilon<Z_s,\,b<Y_t<c,\,d<Z_t<e\right\}}}\\
={}&\Et{W1_{\left\{\exists n\in\ints\textrm{ s.t.\,}Y_s<n\epsilon<Z_s,\,b<Y_t<c,\,d<Z_t<e\right\}}}
\end{split}\end{equation*}
Letting $\epsilon$ decrease to $0$ and using bounded convergence gives
\begin{equation}\label{eqn:proof:finite dist ac 2 dbl:2}
\Et{W1_{\left\{Y_s<Z_s,\,b<Z_t<c,\,d<Y_t<e\right\}}}
\le \Et{W1_{\left\{Y_s<Z_s,\,b<Y_t<c,\,d<Z_t<e\right\}}}.
\end{equation}
Finally, we note that the set of bounded and non-negative $\setsF^2_s$-measurable random variables $W$ for which inequality (\ref{eqn:proof:finite dist ac 2 dbl:2}) holds is closed under taking positive linear combinations, and under taking increasing and decreasing limits. Therefore, inequality (\ref{eqn:proof:finite dist ac 2 dbl:2}) holds for all bounded and non-negative $\setsF^2_s$-measurable random variables $W$, and inequality (\ref{lemma:finite dist ac 2 dbl:1}) follows from this.
\end{proof}
Using this result, it is now easy to prove that the first statement of Lemma \ref{lemma:finite dist of ac process} implies the second. The idea is to look at the processes $Y$ and $Z$ up until the first time that they touch, which is similar to the coupling method used in \citep{Hobson} to investigate the conditional expectations of convex functions of a martingale diffusion.
\begin{lemma}\label{lemma:acd gives finite dist of ac process}
If $\PP$ is a probability measure on $(\D,\setsF)$ under which $X$ is an almost-continuous diffusion then statement \ref{item:lemma:ac cond} of Lemma \ref{lemma:finite dist of ac process} holds.
\end{lemma}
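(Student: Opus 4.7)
The plan is to pass to the product space $(\D^2,\setsF^2,\tilde\PP)$ of (\ref{eqn:D2 F2})--(\ref{eqn:Ptilde}) and prove the equivalent form of statement \ref{item:lemma:ac cond} supplied by Lemma \ref{lemma:finite dist ac 2 dbl}, namely
\[
\Probt{Y_s<Z_s,\,b<Z_t<c,\,d<Y_t<e\mid\setsF^2_s}\le\Probt{Y_s<Z_s,\,b<Y_t<c,\,d<Z_t<e\mid\setsF^2_s}.
\]
The key tool will be a path-swap at the first meeting time of $Y$ and $Z$ after $s$.

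Let $\tau=\inf\{u\ge s:Y_u\ge Z_u\}$, which is an $\setsF^2_\cdot$-stopping time because $Y-Z$ is cadlag. Write $A_1=\{Y_s<Z_s,\,b<Z_t<c,\,d<Y_t<e\}$ and $A_2=\{Y_s<Z_s,\,b<Y_t<c,\,d<Z_t<e\}$. On $A_1$ we have $Y_t>Z_t$ (since $Z_t<c\le d<Y_t$), so applying the almost-continuity of $X$ to $Y,Z$ at the fixed time pair $(s,t)$ forces $Y_u=Z_u$ for some $u\in(s,t)$ almost surely, giving $\tau<t$ on $A_1$. A closely related rational-approximation argument, using cadlag paths and almost-continuity at countably many rational time pairs, shows that $Y_\tau=Z_\tau$ almost surely on $\{Y_s<Z_s,\,\tau<\infty\}$, so $Y-Z$ does not jump across zero at $\tau$.

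Next, I introduce the measurable swap map $\Phi:\D^2\to\D^2$ that is the identity on $\{\tau=\infty\}$ and, on $\{\tau<\infty\}$, exchanges the two coordinate paths from time $\tau$ onward. Because each coordinate is strong Markov under $\PP$ and $Y,Z$ are $\tilde\PP$-independent, the pair $(Y,Z)$ is a two-dimensional strong Markov process in $\setsF^2_\cdot$. Conditional on $\setsF^2_\tau$ on $\{\tau<\infty\}$, the shifted processes $Y_{\tau+\cdot}$ and $Z_{\tau+\cdot}$ are therefore independent copies of $X$ starting from the common value $Y_\tau=Z_\tau$. Their joint conditional law is symmetric under exchange, so $\Phi$ preserves the measure $\tilde\PP$.

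Finally, on $A_1\subseteq\{\tau<t\}$ the swap sends $(Y_t,Z_t)$ to $(Z_t,Y_t)$ while leaving $(Y_s,Z_s)$ unchanged (as $\tau>s$), so $\Phi$ maps $A_1$ into $A_2$. Combined with measure-preservation this gives $\tilde\PP(A_1)\le\tilde\PP(A_2)$; and since $\Phi$ fixes paths on $[0,s]$, any $\setsF^2_s$-measurable set is $\Phi$-invariant, so the same argument applied inside an arbitrary such set delivers the conditional inequality. The step I expect to be the main obstacle is the no-overshoot claim $Y_\tau=Z_\tau$: the stated almost-continuity hypothesis only controls fixed time pairs, and it takes a careful rational approximation together with cadlag right-continuity to promote it to the random time $\tau$. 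The verification that $\Phi$ preserves $\tilde\PP$ is a routine consequence of the two-dimensional strong Markov property once this no-overshoot is in hand.
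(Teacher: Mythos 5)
Your overall strategy --- reduce to the conditional form via Lemma \ref{lemma:finite dist ac 2 dbl}, run two independent copies to the first meeting time $\tau$ after $s$, use almost-continuity to get $Y_\tau=Z_\tau$, and then exploit a symmetry at $\tau$ --- is exactly the paper's coupling argument, and your treatment of the no-overshoot claim and of the final passage from $A_1$ to $A_2$ matches it. But you have misjudged where the difficulty sits. The genuine gap is the step you call routine: that the path-swap map $\Phi$ preserves $\tilde\PP$. That claim needs the conditional law of the \emph{entire future path pair} $(Y_{\tau+\cdot},Z_{\tau+\cdot})$ given $\setsF^2_\tau$ to be an exchangeable product kernel depending only on $(\tau,Y_\tau,Z_\tau)$. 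The strong Markov property of Definition \ref{defn:acd} is much weaker than this: it only supplies, for each fixed $g$ and lag $t$, a single measurable function $f$ with $f(\tau,X_\tau)=\E{g(X_{\tau+t})\mid\setsF_\tau}$. Upgrading that to a regular conditional path law (and then to conditional independence and exchangeability of the two futures on the product space) requires an iteration over finite-dimensional marginals together with a measurable-selection/kernel construction, none of which is routine and none of which you supply. There is also the minor point, which the paper flags, that $\tau$ is only a stopping time for the universally completed filtration.

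The paper's proof is engineered precisely to avoid this. It never swaps paths: it fixes the two indicator functions $g_1=1_{(b,c)}$, $g_2=1_{(d,e)}$, uses the (weak) strong Markov property to write $1_{\{\tau\le t\}}\Et{g_i(Y_t)\mid\setsF^2_\tau}=1_{\{\tau\le t\}}f_i(\tau,Y_\tau)$ and likewise for $Z$ (this one-time-marginal lift to the product space is the content of Lemma 2.2 of \citep{Lowther1}), and then the ``swap'' is the purely algebraic identity $f_1(\tau,Z_\tau)f_2(\tau,Y_\tau)=f_1(\tau,Y_\tau)f_2(\tau,Z_\tau)$, which is trivial once $Y_\tau=Z_\tau$. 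Undoing the conditioning then gives $\Probt{Y_s<Z_s,\,b<Z_t<c,\,d<Y_t<e\mid\setsF^2_s}=\Probt{\tau\le t,\,b<Y_t<c,\,d<Z_t<e\mid\setsF^2_s}\le\Probt{Y_s<Z_s,\,b<Y_t<c,\,d<Z_t<e\mid\setsF^2_s}$. To repair your argument you would either need to prove the path-level exchangeability of the conditional future law from Definition \ref{defn:acd}, or replace the path swap by this function-value swap.
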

\begin{proof}
First choose real numbers $b<c\le d<e$, times $s<t\in\reals_+$, and set
\begin{equation*}
g_1(x)=1_{\{b<x<c\}},\
g_2(x)=1_{\{d<x<e\}}.
\end{equation*}
Then, by the strong Markov property, there exist measurable functions $f_1,f_2:[0,t]\times\reals\rightarrow\reals$ such that
\begin{equation*}
1_{\{\tau\le t\}}f_i(\tau,X_\tau)=1_{\{\tau\le t\}}\E{g_i(X_t)|\setsF_\tau}
\end{equation*}
for $i=1,2$ and for every stopping time $\tau$.
This follows easily from definition \ref{defn:acd} of the strong Markov property (see \citep{Lowther1}, Lemma 2.1). Furthermore it then follows that
\begin{align*}
&1_{\{\tau\le t\}}f_i(\tau,Y_\tau)=1_{\{\tau\le t\}}\Et{g_i(Y_t)|\setsF^2_\tau}\\
&1_{\{\tau\le t\}}f_i(\tau,Z_\tau)=1_{\{\tau\le t\}}\Et{g_i(Z_t)|\setsF^2_\tau}
\end{align*}
for every $\setsF^2_\cdot$-stopping time $\tau$ (see \citep{Lowther1}, Lemma 2.2).

Now let $\tau$ be the following stopping time.
\begin{equation*}
\tau=\left\{
\begin{array}{ll}
\inf\left\{u\in[s,\infty):Y_u\ge Z_u\right\},&\textrm{if }Y_s<Z_s,\\
\infty,&\textrm{otherwise}.
\end{array}
\right.
\end{equation*}
Strictly speaking, $\tau$ will only be a stopping time with respect to the universal completion of the filtration. So, throughout this section we assume that all \salg s are replaced by their universal completions. Note that if $Y_s<Z_s$ and $\tau>t$ then $Y_t<Z_t$ so $g_1(Z_t)g_2(Y_t)=0$. Therefore
\begin{equation*}\begin{split}
\tilde\PP\left(Y_s<Z_s,\,b<Z_t<c,\,d<Y_t<e|\setsF^2_s\right)
&=\Et{1_{\{\tau\le t\}}g_1(Z_t)g_2(Y_t)|\setsF^2_s}\\
&=\Et{1_{\{\tau\le t\}}f_1(\tau,Z_\tau)f_2(\tau,Y_\tau)|\setsF^2_s}.
\end{split}\end{equation*}
However, by almost-continuity, we have $Y_\tau=Z_\tau$ whenever $\tau<\infty$ ($\tilde\PP$ a.s.). So,
\begin{equation*}\begin{split}
&\tilde\PP\left(Y_s<Z_s,\,b<Z_t<c,\,d<Y_t<e|\setsF^2_s\right)\\
={}&\Et{1_{\{\tau\le t\}}f_1(\tau,Y_\tau)f_2(\tau,Z_\tau)|\setsF^2_s}\\
={}&\Et{1_{\{\tau\le t\}}g_1(Y_t)g_2(Z_t)|\setsF^2_s}\\
={}&\tilde\PP\left(\tau\le t,\,b<Y_t<c,\,d<Z_t<e|\setsF^2_s\right)\\
\le{}&\tilde\PP\left(Y_s<Z_s,\,b<Y_t<c,\,d<Z_t<e|\setsF^2_s\right)
\end{split}\end{equation*}
The result now follows from Lemma \ref{lemma:finite dist ac 2 dbl}.
\end{proof}

To prove that the second statement of Lemma \ref{lemma:finite dist of ac process} implies the third, we shall look at what happens when the processes $Y$ and $Z$ first cross after any given time. The idea is to show that they cannot jump past each other at this time, and therefore will be equal. As this will be a stopping time we start by rewriting statement \ref{item:lemma:ac cond} of Lemma \ref{lemma:finite dist of ac process} in terms of the distribution at a stopping time.

\begin{lemma}\label{lemma:finite dist of ac 2 pathwise}
Let $\PP$ be a probability measure on $(\D,\setsF)$
such that statement \ref{item:lemma:ac cond} of Lemma \ref{lemma:finite dist of ac process} holds.

Let $b<c\le d<e$ be real numbers and set $V=(b,c)\times(d,e)$.
Also let $U$ be an open subset of $\{(x,y)\in\reals^2:x<y\}$ that is disjoint from $V$, and for any $\setsF^2_\cdot$-stopping time $S$ define the stopping time
\begin{equation}\label{eqn:lemma:finite dist of ac 2 pathwise:1}
\tau^U_{S}=\left\{
\begin{array}{ll}
\inf\left\{t>S:(Y_t,Z_t)\not\in U\right\},&\textrm{if $S<\infty$ and $(Y_S,Z_S)\in U$},\\
\infty,&\textrm{otherwise}.
\end{array}
\right.
\end{equation}
Then,
\begin{equation*}
\tilde\PP\left(\tau^U_S<\infty,\,(Z_{\tau^U_S},Y_{\tau^U_S})\in V\right)
\le\tilde\PP\left(\tau^U_S<\infty,\,(Y_{\tau^U_S},Z_{\tau^U_S})\in V\right).
\end{equation*}
\end{lemma}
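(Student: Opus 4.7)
The plan is to mimic the coupling argument of Lemma \ref{lemma:acd gives finite dist of ac process}, replacing the deterministic inner time there with the random exit time $\tau^U_S$. Since the strong Markov property is not yet available under $\PP$ at this point in the paper, the extension from deterministic times to stopping times has to be carried out by a two-stage discretization followed by a monotone-convergence argument.

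First I would reduce to a deterministic starting time. Approximate $S$ from above by the dyadic stopping time $S_n$ taking the value $(k+1)2^{-n}$ on $\{k2^{-n}\le S<(k+1)2^{-n}\}$ and $+\infty$ on $\{S=\infty\}$; then $S_n\downarrow S$. On the event $\{S<\infty,\,(Y_S,Z_S)\in U\}$, right-continuity of paths together with the openness of $U$ forces $\tau^U_S>S$ strictly, so for all sufficiently large $n$ one has $S_n\in(S,\tau^U_S)$, hence $(Y_{S_n},Z_{S_n})\in U$ and $\tau^U_{S_n}=\tau^U_S$. Decomposing $S_n$ over its countably many values therefore reduces the claim, via dominated convergence, to the statement that for each deterministic $s\in\reals_+$ and each $A\in\setsF^2_s$ with $A\subseteq\{(Y_s,Z_s)\in U\}$,
\begin{equation*}
\Et{1_A 1_{\{\tau^U_s<\infty,\,(Z_{\tau^U_s},Y_{\tau^U_s})\in V\}}} \le \Et{1_A 1_{\{\tau^U_s<\infty,\,(Y_{\tau^U_s},Z_{\tau^U_s})\in V\}}}.
\end{equation*}

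Next I would discretize the exit time. Set $D_n=\{s+k2^{-n}:k\in\nat\}$ and $\tau^{(n)}=\min\{t\in D_n:t>s,\,(Y_t,Z_t)\notin U\}$, with the convention $\tau^{(n)}=\infty$ if no such grid point exists. Iterating inequality (\ref{lemma:finite dist ac 2 dbl:1}) of Lemma \ref{lemma:finite dist ac 2 dbl} between consecutive grid points $t_{k-1}<t_k$, multiplied by the $\setsF^2_{t_{k-1}}$-measurable indicator of $A\cap\{(Y_{t_i},Z_{t_i})\in U\textrm{ for }i<k\}$, and summing over $k$ yields the discrete version
\begin{equation*}
\Et{1_A 1_{\{\tau^{(n)}<\infty,\,(Z_{\tau^{(n)}},Y_{\tau^{(n)}})\in V\}}} \le \Et{1_A 1_{\{\tau^{(n)}<\infty,\,(Y_{\tau^{(n)}},Z_{\tau^{(n)}})\in V\}}}.
\end{equation*}
Two observations drive the iteration: on the conditioning event one has $(Y_{t_{k-1}},Z_{t_{k-1}})\in U\subseteq\{Y<Z\}$ automatically, so the $Y_s<Z_s$ factor in (\ref{lemma:finite dist ac 2 dbl:1}) is free; and both $V$ and its coordinate-swap $V'=(d,e)\times(b,c)$ are disjoint from $U$ (the swap because $U\subseteq\{x<y\}$ while $V'\subseteq\{x>y\}$), so that landing in either one at a grid time really does identify that grid time as $\tau^{(n)}$. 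By linearity over sub-rectangles of $V$ and monotone convergence, the same inequality persists with $1_V(x,y)$ replaced by $f(x)g(y)$ for any continuous $f,g:\reals\to[0,1]$ supported in $(b,c)$ and $(d,e)$ respectively.

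Finally I would pass to the limit $n\to\infty$, which is the main obstacle. A priori $\tau^{(n)}\downarrow\tau^\ast\ge\tau^U_s$ could fail to equal $\tau^U_s$---for instance if the path only touches $\partial U$ at $\tau^U_s$ and instantly re-enters $U$---and without the convergence $\tau^{(n)}\to\tau^U_s$ the limits of $(Y_{\tau^{(n)}},Z_{\tau^{(n)}})$ are not controlled. The saving observation is that this pathology is ruled out precisely on the events that contribute to our inequality: on $\{(Y_{\tau^U_s},Z_{\tau^U_s})\in V\cup V'\}$, the set $V\cup V'$ is open and disjoint from $U$, so right-continuity of the paths forces $(Y_t,Z_t)\notin U$ on a whole right-neighborhood of $\tau^U_s$; any grid point in this neighborhood gives $\tau^{(n)}\downarrow\tau^U_s$ and hence, again by right-continuity, $(Y_{\tau^{(n)}},Z_{\tau^{(n)}})\to(Y_{\tau^U_s},Z_{\tau^U_s})$. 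Bounded convergence then yields the inequality with the continuous $f,g$, and monotone convergence $f\uparrow 1_{(b,c)}$, $g\uparrow 1_{(d,e)}$ recovers the indicator form, completing the proof.
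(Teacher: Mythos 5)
Your overall strategy (discretize, iterate inequality (\ref{lemma:finite dist ac 2 dbl:1}) between grid points, pass to the limit) is the same as the paper's, and your first two steps are sound --- indeed the reduction to a deterministic starting time is unnecessary, since the events $\{S\le t_{k-1}<\tau^U_S,\ (Y_S,Z_S)\in U\}$ are already $\setsF^2_{t_{k-1}}$-measurable and can be fed directly into Lemma \ref{lemma:finite dist ac 2 dbl}. The genuine gap is in the final limiting step, and it comes from your choice of discretization. You take $\tau^{(n)}$ to be the first \emph{grid point} at which the path is outside $U$, and you correctly observe that $\tau^{(n)}$ need not converge to $\tau^U_s$ when the path leaves $U$ at $\tau^U_s$ only instantaneously and then re-enters. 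Your ``saving observation'' controls this only on the event $\{(Y_{\tau^U_s},Z_{\tau^U_s})\in V\cup V'\}$, where $V'=(d,e)\times(b,c)$ is the coordinate swap of $V$. That is enough for the left-hand side (a Fatou lower bound only needs the paths whose exit point lands in $V'$), but it is not enough for the right-hand side: there you must bound $\limsup_n\Et{1_A1_{\{\tau^{(n)}<\infty\}}f(Y_{\tau^{(n)}})g(Z_{\tau^{(n)}})}$ from \emph{above}, and the paths contributing to this quantity are those with $(Y_{\tau^{(n)}},Z_{\tau^{(n)}})\in V$ --- a condition at time $\tau^{(n)}$, not at $\tau^U_s$. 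A path can exit $U$ at $\tau^U_s$ through a boundary point lying outside $V\cup V'$, immediately re-enter $U$, and at some much later grid time be outside $U$ and inside $V$; such paths contribute a non-vanishing amount to the right-hand side at every $n$ but contribute nothing to the target quantity $\Et{1_A1_{\{\tau^U_s<\infty\}}f(Y_{\tau^U_s})g(Z_{\tau^U_s})}$. In the intended application (Lemma \ref{lemma:no jumps across Z}), $U=(-\infty,b)\times(b,c)$ and this scenario --- $Z$ touching $c$ and returning --- has positive probability for the processes of interest, and it cannot be excluded at this stage without circularity.

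The paper avoids this by anchoring the discretized time to $\tau^U_S$ itself: $T_n=\inf\{t_{n,k}:t_{n,k}\ge\tau^U_S>t_{n,k-1}\ge S\}$, so that $T_n\downarrow\tau^U_S$ unconditionally and $(Y_{T_n},Z_{T_n})\rightarrow(Y_{\tau^U_S},Z_{\tau^U_S})$ by right-continuity. The discrete inequality survives this change because $\{T_n=t_{n,k}\}\cap\{(Y_{t_{n,k}},Z_{t_{n,k}})\in V\}$ and $\{T_n=t_{n,k}\}\cap\{(Z_{t_{n,k}},Y_{t_{n,k}})\in V\}$ each coincide with $A_{n,k-1}$ intersected with the corresponding position event: landing in $V$ or in $V'$ at time $t_{n,k}$ forces the path outside $U$ there, hence $\tau^U_S\le t_{n,k}$. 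The limit then gives $\limsup_n1_{\{(Y_{T_n},Z_{T_n})\in V\}}\le1_{\{b\le Y_{\tau^U_S}\le c,\ d\le Z_{\tau^U_S}\le e\}}$, and the passage from the closed to the open rectangle is handled by shrinking $V$ slightly at the end (your continuous $f,g$ device would serve the same purpose). If you replace your $\tau^{(n)}$ by this $T_n$, the rest of your argument goes through and the deterministic-time reduction can be dropped.
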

\begin{proof}
Let $t_{n,k}=k/n$ for all $k\in\ints_{\ge 0}$ and $n\in\nat$, and set
\begin{equation*}
A_{n,k} = \left\{S\le t_{n,k}<\tau^U_S,\,(Y_S,Z_S)\in U\right\}\in\setsF^2_{t_{n,k}}.
\end{equation*}
We now let $T_n$ be the stopping time
\begin{equation*}
T_n=\inf\left\{t_{n,k}:k\in\nat,\,t_{n,k}\ge\tau^U_S>t_{n,k-1}\ge S\right\}
\end{equation*}
so that $T_n\downarrow\tau^U_S$ as $n\rightarrow\infty$.
Then we can apply Lemma \ref{lemma:finite dist ac 2 dbl},
\begin{equation}\label{eqn:proof:finite dist of ac 2 pathwise:1}\begin{split}
\tilde\PP\left(T_n<\infty,(Z_{T_n},Y_{T_n})\in V\right)
&=\sum_{k=1}^\infty\tilde\PP\left(T_n=t_{n,k},(Z_{t_{n,k}},Y_{t_{n,k}})\in V\right)\\
&=\sum_{k=1}^\infty\tilde\PP\left(A_{n,k-1}\cap\left\{(Z_{t_{n,k}},Y_{t_{n,k}})\in V\right\}\right)\\
&\le\sum_{k=1}^\infty\tilde\PP\left(A_{n,k-1}\cap\left\{(Y_{t_{n,k}},Z_{t_{n,k}})\in V\right\}\right)\\
&=\sum_{k=1}^\infty\tilde\PP\left(T_n=t_{n,k},\,(Y_{t_{n,k}},Z_{t_{n,k}})\in V\right)\\
&=\tilde\PP\left(T_n<\infty,\,(Y_{T_n},Z_{T_n})\in V\right)
\end{split}\end{equation}
Now suppose that $\tau^U_S<\infty$ and $(Z_{\tau^U_S},Y_{\tau^U_S})\in V$. As $T_n\downarrow\tau^U_S$ as $n\rightarrow\infty$, the right-continuity of $Y$ and $Z$ gives $(Z_{T_n},Y_{T_n})\in V$ for large $n$. So, by bounded convergence
\begin{equation}\label{eqn:proof:finite dist of ac 2 pathwise:2}
\tilde\PP\left(\tau^U_S<\infty,\,(Z_{\tau^U_S},Y_{\tau^U_S})\in V\right)
\le\liminf_{n\rightarrow\infty}\tilde\PP\left(T_n<\infty,\,(Z_{T_n},Y_{T_n})\in V\right).
\end{equation}
Similarly, suppose that $(Y_{T_n},Z_{T_n})\in V$ for infinitely many $n$. By the right-continuity of $Y$ and $Z$, this gives $b\le Y_{\tau^U_S}\le c$ and $d\le Z_{\tau^U_S}\le e$. So,
\begin{equation*}
\limsup_{n\rightarrow\infty}1_{\left\{T_n<\infty,\,(Y_{T_n},Z_{T_n})\in V\right\}}
\le1_{\left\{\tau^U_S<\infty,\,b\le Y_{\tau^U_S}\le c,\,d\le Z_{\tau^U_S}\le e\right\}}.
\end{equation*}
Then monotone convergence gives
\begin{equation}\label{eqn:proof:finite dist of ac 2 pathwise:3}\begin{split}
&\limsup_{n\rightarrow\infty}\tilde\PP\left(T_n<\infty,\,(Y_{T_n},Z_{T_n})\in V\right)\\
&\le
\tilde\PP\left(\tau^U_S<\infty,\,b\le Y_{\tau^U_S}\le c,\,d\le Z_{\tau^U_S}\le e\right).
\end{split}\end{equation}
Combining inequalities (\ref{eqn:proof:finite dist of ac 2 pathwise:1}), (\ref{eqn:proof:finite dist of ac 2 pathwise:2}) and (\ref{eqn:proof:finite dist of ac 2 pathwise:3}) gives
\begin{equation}\label{eqn:proof:finite dist of ac 2 pathwise:4}\begin{split}
&\tilde\PP\left(\tau^U_S<\infty,\,(Z_{\tau^U_S},\,Y_{\tau^U_S})\in V\right)\\
&\le
\tilde\PP\left(\tau^U_S<\infty,\,b\le Y_{\tau^U_S}\le c,\,d\le Z_{\tau^U_S}\le e\right).
\end{split}\end{equation}
Finally, set $b_n=b+1/n$, $c_n=c-1/n$, $d_n=d+1/n$ and $e_n=e-1/n$ for every $n\in\nat$. Then inequality (\ref{eqn:proof:finite dist of ac 2 pathwise:4}) with $(b_n,c_n)\times(d_n,e_n)$ in place of $V$ gives
\begin{equation*}\begin{split}
&\tilde\PP\left(\tau^U_S<\infty,\,(Z_{\tau^U_S},Y_{\tau^U_S})\in V\right)\\
={}&\lim_{n\rightarrow\infty}\PP\left(\tau^U_S<\infty,\,b_n<Z_{\tau^U_S}<c_n,\,d_n<Y_{\tau^U_S}<e_n\right)\\
\le{}&\limsup_{n\rightarrow\infty}\PP\left(\tau^U_S<\infty,\,b_n\le Y_{\tau^U_S}\le c_n,\,d_n\le Z_{\tau^U_S}\le e_n\right)\\
={}&
\tilde\PP\left(\tau^U_S<\infty,\,(Y_{\tau^U_S},Z_{\tau^U_S})\in V\right).\qedhere
\end{split}\end{equation*}
\end{proof}

We shall use Lemma \ref{lemma:finite dist of ac 2 pathwise} to prove almost-continuity by showing that the probability of $Y$ jumping from strictly below $Z$ to above it is bounded by the probability of them jumping simultaneously. The following simple result will tell us that $Y$ and $Z$ cannot jump simultaneously.

\begin{lemma}\label{lemma:no simultaneous jumps}
Let $Y$ and $Z$ be independent cadlag processes such that $Y$ is continuous in probability.
Then, with probability $1$, $Y_{t-}=Y_t$ or $Z_{t-}=Z_t$ for every $t>0$.
\end{lemma}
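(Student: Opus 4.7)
The plan is to reduce the pathwise statement to a fixed-time statement by exhausting the jump times of $Z$ by a countable family of $\sigma(Z)$-measurable stopping times, and then exploiting the independence of $Y$ and $Z$.

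The starting observation is that continuity in probability of $Y$ at a fixed time $t>0$ forces $\Prob{Y_{t-}\neq Y_t}=0$: since $Y$ is cadlag, $Y_s\to Y_{t-}$ almost surely as $s\uparrow t$, while $Y_s\to Y_t$ in probability by hypothesis, so the two limits must coincide almost surely.

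Next, since $Z$ is cadlag its jump set $D_Z=\{t>0:Z_{t-}\neq Z_t\}$ is countable, and there is a sequence $(\tau_n)_{n\in\nat}$ of stopping times in the natural filtration of $Z$ that exhausts the jumps of $Z$ --- for instance, for each $k\in\nat$ one can take the successive times at which $|Z_t-Z_{t-}|>1/k$ and then enumerate this countable family. Each $\tau_n$ is $\sigma(Z)$-measurable and therefore, by independence of $Y$ and $Z$, independent of $Y$. Conditioning on $\tau_n$ and applying Fubini,
\begin{equation*}
\Prob{\tau_n<\infty,\,Y_{\tau_n-}\neq Y_{\tau_n}}=\int_0^\infty \Prob{Y_{t-}\neq Y_t}\,\mu_n(dt)=0,
\end{equation*}
where $\mu_n$ denotes the law of $\tau_n$ restricted to $[0,\infty)$. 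A countable union over $n$ then gives that, almost surely, no jump time of $Z$ is also a jump time of $Y$, which is precisely the claim of the lemma.

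The only mildly subtle step is the standard construction of $\sigma(Z)$-measurable stopping times exhausting the jumps of $Z$, together with the joint measurability needed to make sense of $Y_{\tau_n-}$ and to apply Fubini; once this bookkeeping is in place, the argument is essentially immediate from continuity in probability of $Y$ and independence.
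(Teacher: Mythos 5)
Your proof is correct and follows essentially the same route as the paper's: exhaust the jump times by countably many measurable random times, use the independence of $Y$ and $Z$, and invoke continuity in probability to kill the fixed-time events $\{Y_{t-}\neq Y_t\}$. The only cosmetic difference is that the paper covers the jumps of both processes and sums over the atoms of the laws of the two families of random times, whereas you cover only the jumps of $Z$ and integrate $\Prob{Y_{t-}\neq Y_t}$ against the law of $\tau_n$ via Fubini; both are sound.
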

\begin{proof}
As $Y$ is cadlag, there exist $Y$-measurable random times $(S_n)_{n\in\nat}$ such that $\cup_{n\in\nat}\slbrack S_n\srbrack$ contains all the jump times of $Y$ almost-surely (see \citep{HeWangYan} Theorem 3.32). Without loss of generality, we may suppose that $Y_{S_n-}\not=Y_{S_n}$ whenever $S_n<\infty$. Similarly, there exist $Z$-measurable random times $(T_n)_{n\in\nat}$ such that $\cup_{n\in\nat}\slbrack T_n\srbrack$ contains all the jump times of $Z$ almost-surely, and such that $Z_{T_n-}\not=Z_{T_n}$ whenever $T_n<\infty$.
Then,
\begin{equation*}
\Prob{\exists t\in\reals_+\textrm{ s.t.\,}Y_{t-}\not=Y_t\textrm{ and }Z_{t-}\not=Z_t}\le\sum_{m,n=1}^\infty \Prob{S_m=T_n<\infty}.
\end{equation*}
However, the independence of $S_m$ and $T_n$ together with the continuity in probability of $Y$ gives
\begin{equation*}\begin{split}
\Prob{S_m=T_n<\infty}&=\sum_{t\in\reals_+}\Prob{S_m=t}\Prob{T_n=t}\\
&\le\sum_{t\in\reals_+}\Prob{Y_{t-}\not=Y_t}\Prob{Z_{t-}\not=Z_t}\\
&=0.\qedhere
\end{split}\end{equation*}
\end{proof}

This simple result together with Lemma \ref{lemma:finite dist of ac 2 pathwise} gets us some way towards showing that $X$ is almost-continuous.

\begin{lemma}\label{lemma:no jumps across Z}
Let $\PP$ be a probability measure on $(\D,\setsF)$ under which $X$ is continuous in probability, and such that statement \ref{item:lemma:ac cond} of Lemma \ref{lemma:finite dist of ac process} holds. Then
\begin{equation*}
\tilde\PP\left(\exists t>0\rm{\ s.t.\,}Y_{t-}<Z_t<Y_t\right)=0.
\end{equation*}
\end{lemma}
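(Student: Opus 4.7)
The approach is to cover the bad event by countably many exit-into-swapped-rectangle events of Lemma \ref{lemma:finite dist of ac 2 pathwise}, dominate each by its ``straight'' exit analogue, and then drive those dominating events to zero by combining Lemma \ref{lemma:no simultaneous jumps} with the independence of $Y$ and $Z$.

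First I would use Lemma \ref{lemma:no simultaneous jumps} to restrict attention to the event $\{Y_{t-} < Z_{t-} = Z_t < Y_t\}$: on the bad event $Y$ jumps at $t$, so $Z$ must be continuous there. Next, for each quadruple of rationals $b<c<d<e$ with $c<d$, set $V=(b,c)\times(d,e)$ and
\[
U = \{(x,y):x<y\} \setminus [b,c]\times[d,e];
\]
this $U$ is open in $\{x<y\}$ and disjoint from $V$. For each rational $s\ge 0$ consider the stopping time $\tau^U_s$ of Lemma \ref{lemma:finite dist of ac 2 pathwise}. On a bad path with jump at $t$, choose rationals with $b<Z_t<c<d<Y_t<e$; then $Z_t<d$ and $Y_{t-}<Z_t$ place $(Y_{t-},Z_{t-})$ into $U$, and by right-continuity $(Y_u,Z_u)\in U$ throughout a left-neighbourhood of $t$, so for any rational $s$ in that neighbourhood $\tau^U_s=t$ and $(Z_t,Y_t)\in V$. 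The bad event therefore lies in the countable union, over rational $(s,b,c,d,e)$, of the sets $\{\tau^U_s<\infty,(Z_{\tau^U_s},Y_{\tau^U_s})\in V\}$.

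Lemma \ref{lemma:finite dist of ac 2 pathwise} now gives
\[
\tilde\PP\bigl(\tau^U_s<\infty,(Z_{\tau^U_s},Y_{\tau^U_s})\in V\bigr)\le\tilde\PP\bigl(\tau^U_s<\infty,(Y_{\tau^U_s},Z_{\tau^U_s})\in V\bigr)
\]
for each parameter choice. Since $V$ is disjoint from $\bar U$ while $(Y_{\tau-},Z_{\tau-})\in\bar U$, the dominating event forces $(Y,Z)$ to jump at $\tau$; Lemma \ref{lemma:no simultaneous jumps} then says that exactly one of $Y,Z$ jumps, splitting the event into two symmetric subcases (either $Y$ jumps into $(b,c)$ with $Z_\tau\in(d,e)$, or $Z$ jumps into $(d,e)$ with $Y_\tau\in(b,c)$). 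The main obstacle is to kill this residual family of jumping events in aggregate. My plan is to enumerate the jumps of $Y$ (respectively $Z$) by $Y$-measurable (respectively $Z$-measurable) random times $(S_n)_{n\in\nat}$, as in the proof of Lemma \ref{lemma:no simultaneous jumps}, and to condition on the jumping process. Independence of $Y$ and $Z$ then reduces the probability that the continuous process lies in the prescribed interval of $V$ at the jump time to a marginal of $X$, and refining the rational cover to a dyadic grid of shrinking mesh together with continuity in probability of $X$ should force the total contribution to vanish.
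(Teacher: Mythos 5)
Your overall architecture (cover the bad event by countably many ``exit into the swapped rectangle'' events, dominate each by its straight analogue via Lemma \ref{lemma:finite dist of ac 2 pathwise}, kill the dominating events using Lemma \ref{lemma:no simultaneous jumps}) is the paper's architecture, but your choice of $U$ breaks the last step, and this is a genuine gap. With $U=\{x<y\}\setminus[b,c]\times[d,e]$ and $V=(b,c)\times(d,e)$, the straight event $\{\tau^U_s<\infty,\ (Y_{\tau^U_s},Z_{\tau^U_s})\in V\}$ only forces \emph{at least one} of $Y,Z$ to jump at $\tau^U_s$: the pair can enter the box by $Y$ alone jumping from outside $(b,c)$ into $(b,c)$ while $Z$ sits continuously in $(d,e)$, still with $Y<Z$. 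This is an event of genuinely positive probability in general (take $Y,Z$ independent symmetric Poisson processes, as in Section \ref{sec:examples}, and a box containing an integer pair $(j,k)$ with $j<k$), so Lemma \ref{lemma:no simultaneous jumps} cannot dispose of it and no individual term vanishes. Your proposed rescue --- refine to a dyadic mesh and use independence plus continuity in probability to make the non-jumping coordinate's contribution small --- also fails: continuity in probability does not exclude atoms in the marginals of $X$ (again the symmetric Poisson process has $\Prob{X_u=k}>0$), so the probability that the continuous coordinate lies in a shrinking interval around an atom does not tend to zero, and the sum over jump times $n$, start times $s$ and boxes is in any case uncontrolled because the events for different boxes carry different stopping times and are far from disjoint.

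The fix is the paper's choice of regions, which is where the real idea of the proof lives: take $U=(-\infty,b)\times(b,c)$ and $V=(b,c)\times(c,d)$. On $U$ one has $Y<b$ and $Z<c$, so on the straight exit event $(Y_{\tau^U_s},Z_{\tau^U_s})\in V$ the left limits satisfy $Y_{\tau^U_s-}\le b<Y_{\tau^U_s}$ \emph{and} $Z_{\tau^U_s-}\le c<Z_{\tau^U_s}$; both processes jump upward across fixed levels simultaneously, and Lemma \ref{lemma:no simultaneous jumps} annihilates the entire dominating event in one stroke, with no residual cases and no mesh refinement. The covering step then goes through exactly as you describe (on the bad event $Z$ is continuous at $t$ by Lemma \ref{lemma:no simultaneous jumps}, one picks rationals $Y_{t-}<b<Z_t<c<Y_t<d$ and a rational $s$ with $(Y_u,Z_u)\in U$ on $[s,t)$, so $\tau^U_s=t$ and $(Z_t,Y_t)\in V$). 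You should rework your argument with this $U$ and $V$; as written, the residual family you acknowledge as ``the main obstacle'' cannot be killed by the tools available.
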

\begin{proof}
Choose any real numbers $b<c<d$ and let $U,V$ be the sets
\begin{align*}
&U=(-\infty,b)\times(b,c),\\
&V=(b,c)\times(c,d).
\end{align*}
Then $U\cap V=\emptyset$, so letting $\tau^U_s$ be the stopping time given by equation (\ref{eqn:lemma:finite dist of ac 2 pathwise:1}) for any $s\in\reals_+$, we can apply Lemma \ref{lemma:finite dist of ac 2 pathwise} to get
\begin{equation}\label{eqn:proof:no jumps across Z:1}
\tilde\PP\left(\tau^U_s<\infty,\,(Z_{\tau^U_s},Y_{\tau^U_s})\in V\right)
\le\tilde\PP\left(\tau^U_s<\infty,\,(Y_{\tau^U_s},Z_{\tau^U_s})\in V\right).
\end{equation}
However, if $\tau^U_s<\infty$ and $(Y_{\tau^U_s},Z_{\tau^U_s})\in V$ then $Y_{\tau^U_s}>b\ge Y_{\tau^U_s-}$ and $Z_{\tau^U_s}>c\ge Z_{\tau^U_s-}$.
By Lemma \ref{lemma:no simultaneous jumps} the processes $Y$ and $Z$ cannot jump simultaneously, so this has zero probability. Inequality (\ref{eqn:proof:no jumps across Z:1}) then gives
\begin{equation}\label{eqn:proof:no jumps across Z:2}
\tilde\PP\left(\tau^U_s<\infty,\,(Z_{\tau^U_s},Y_{\tau^U_s})\in V\right)=0.
\end{equation}
Now suppose that $(Y_{t-},Z_{t-})\in U$ and $(Z_t,Y_t)\in V$ for some time $t$. Then, by left-continuity, there exists an $s<t$ such that $s\in\rats_+$ and $(Y_u,Z_u)\in U$ for every $u\in[s,t)$. In this case $\tau^U_s=t$.
Therefore equation (\ref{eqn:proof:no jumps across Z:2}) gives
\begin{equation}\label{eqn:proof:no jumps across Z:3}\begin{split}
&\tilde\PP\left(\exists t\in\reals_+\textrm{ s.t.\,} Y_{t-}<b<Z_{t-}=Z_t<c<Y_t<d\right)\\
\le{}&\tilde\PP\left(\exists t\in\reals_+\textrm{ s.t.\,} (Y_{t-},Z_{t-})\in U,\,(Z_t,Y_t)\in V\right)\\
\le{}&\sum_{s\in\rats_+}\tilde\PP\left(\tau^U_s<\infty,\,(Z_{\tau^U_s},Y_{\tau^U_s})\in V\right)\\
={}&0.
\end{split}\end{equation}
Note that for every $t$ such that $Y_{t-}<Z_t<Y_t$ then Lemma \ref{lemma:no simultaneous jumps} tells us that $Z_{t-}=Z_t$. So, by (\ref{eqn:proof:no jumps across Z:3})
\begin{equation*}\begin{split}
&\tilde\PP\left(\exists t\in\reals_+\textrm{ s.t.\,}Y_{t-}<Z_t<Y_t\right)\\
={}&
\tilde\PP\left(\exists t\in\reals_+\textrm{ s.t.\,} Y_{t-}<Z_{t-}=Z_t<Y_t\right)\\
\le{}&\sum_{a<b<c<d\in\rats}\tilde\PP\left(\exists t\in\reals_+\textrm{ s.t.\,} Y_{t-}<b<Z_{t-}=Z_t<c<Y_t<d\right)\\
={}&0.\qedhere
\end{split}\end{equation*}
\end{proof}

Lemma \ref{lemma:no jumps across Z} shows that $Y$ cannot jump from strictly below to strictly above Z. However, it does not rule out the possibility that $Y$ can approach $Z$ from below, then jump to above $Z$ (which would contradict almost-continuity). In order to show that this behaviour is not possible, we shall again make use of Lemma \ref{lemma:finite dist of ac 2 pathwise}. The idea is to reduce it to showing that it is not possible for $Y$ to approach $Z$ from below, then jump downwards. In fact this behaviour is ruled out by the conclusion of Lemma \ref{lemma:no jumps across Z}, but it is far from obvious that this is the case. We shall make use of some results that we proved in \citep{Lowther1}. First, we restate the definition of the marginal support used in \citep{Lowther1}.

\begin{definition}\label{defn:marginal support}
Let $X$ be a real valued stochastic process. Then, its \emph{marginal support} is
\begin{displaymath}
\msupport{X} = \left\{(t,x)\in\halfplane: x \in\support{X_t}\right\}.
\end{displaymath}
\end{definition}
As we showed in \citep{Lowther1}, the marginal support of a process $X$ is Borel measurable, and the relevance of the marginal support to our current argument is given by the following result.
\begin{lemma}\label{lemma:never crosses MSupp cond}
If $X$ is a cadlag real valued process which is continuous in probability then the following are equivalent.
\begin{enumerate}
\item The set
\begin{equation*}
\left\{(t,x)\in\halfplane:X_{t-}<x<X_t\right\}.
\end{equation*}
is disjoint from $\msupport{X}$ with probability one.
\item Given two independent cadlag processes $Y$ and $Z$, each with the same distribution as $X$, then
\begin{equation*}
\Prob{\exists t>0{\rm\ s.t.\,} Y_{t-}<Z_t<Y_t}=0
\end{equation*}
\end{enumerate}
\end{lemma}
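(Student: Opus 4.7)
The plan is to exploit the fact that both conditions concern only upward jumps of the process, of which there are countably many on each sample path, so everything reduces to a sum over an enumeration of the jump times of the first copy $Y$. As in the proof of Lemma \ref{lemma:no simultaneous jumps}, the cadlag property furnishes $Y$-measurable random times $(T_n)_{n\in\nat}$ whose graphs cover all jump times of $Y$, with $Y_{T_n-}\ne Y_{T_n}$ on $\{T_n<\infty\}$. Writing $\mu_t$ for the law of $X_t$, the bridge between the two conditions is the elementary fact that for an open interval $I\subseteq\reals$, we have $I\cap\support{X_t}\ne\emptyset$ if and only if $\mu_t(I)>0$; combined with independence of $Z$ from $Y$ under $\tilde\PP$, this yields $\Probt{Z_\tau\in I\mid Y}=\mu_\tau(I)$ for any $Y$-measurable time $\tau$ and $Y$-measurable interval $I$.

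For the direction $(1)\Rightarrow(2)$, I would work on the event $A_n=\{T_n<\infty,\,Y_{T_n-}<Y_{T_n}\}$. Statement (1) applied to $Y$ says that, almost surely on $A_n$, the open interval $(Y_{T_n-},Y_{T_n})$ is disjoint from $\support{X_{T_n}}$, hence has zero $\mu_{T_n}$-measure, and the conditioning identity above gives
\begin{equation*}
\Probt{Y_{T_n-}<Z_{T_n}<Y_{T_n},\,A_n}=\E{1_{A_n}\,\mu_{T_n}\bigl((Y_{T_n-},Y_{T_n})\bigr)}=0.
\end{equation*}
Summing over $n$ and noting that any $t>0$ with $Y_{t-}<Z_t<Y_t$ must be a jump time of $Y$ (hence equal to some $T_n$) delivers (2).

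For the converse I would argue by contrapositive. If (1) fails then, with positive probability, $Y$ has a jump whose open interval meets $\support{X_t}$; since the jump times are covered by $\{T_n\}$, at least one $n$ has a measurable sub-event $B_n\subseteq A_n$ of positive probability on which $(Y_{T_n-},Y_{T_n})\cap\support{X_{T_n}}\ne\emptyset$. On $B_n$ the interval has strictly positive $\mu_{T_n}$-measure, and the same conditioning argument produces
\begin{equation*}
\Probt{Y_{T_n-}<Z_{T_n}<Y_{T_n}}\ge \E{1_{B_n}\,\mu_{T_n}\bigl((Y_{T_n-},Y_{T_n})\bigr)}>0,
\end{equation*}
contradicting (2).

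The hard part is really just measurability bookkeeping rather than any fresh probabilistic content: one needs $\msupport{X}$ to be Borel so that $B_n$ is a legitimate event, the kernel $(t,I)\mapsto\mu_t(I)$ to be measurable enough to justify the Fubini-style conditioning on $Y$, and the $(T_n)$ to genuinely exhaust the jumps of $Y$. All three are standard --- the first from \citep{Lowther1}, the last from the cadlag section theorems cited in \citep{HeWangYan} --- so once those are invoked the substantive content is the short calculation above.
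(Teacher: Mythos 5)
Your argument is correct, but there is nothing in this paper to compare it against: the paper's ``proof'' of Lemma \ref{lemma:never crosses MSupp cond} is simply a citation to Lemma 4.7 of \citep{Lowther1}, so the substantive content is deferred to the reference. Taken on its own terms, your reconstruction is sound and self-contained. The two pillars are exactly right: covering the upward jump times of $Y$ by countably many $\sigma(Y)$-measurable random times $(T_n)$ (the same device the paper uses in Lemma \ref{lemma:no simultaneous jumps}), and the observation that for an open interval $I$ one has $I\cap\support{X_t}\ne\emptyset$ if and only if $\mu_t(I)>0$, which converts the support condition in statement (1) into the quantity $\mu_{T_n}\bigl((Y_{T_n-},Y_{T_n})\bigr)$ appearing in your Fubini computation. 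The freezing identity $\Probt{Y_{T_n-}<Z_{T_n}<Y_{T_n},\,A_n}=\Et{1_{A_n}\,\mu_{T_n}\bigl((Y_{T_n-},Y_{T_n})\bigr)}$ is a legitimate application of Fubini on $(\D^2,\setsF^2,\tilde\PP)$, since $(t,\omega)\mapsto X_t(\omega)$ is jointly measurable for cadlag $X$ and $(t,a,b)\mapsto\mu_t((a,b))$ is jointly measurable; this also disposes of the measurability of your events $B_n$ without needing to invoke Borel measurability of $\msupport{X}$ directly, since $B_n=A_n\cap\{\mu_{T_n}((Y_{T_n-},Y_{T_n}))>0\}$. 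One small remark: your argument never uses continuity in probability, so you have in fact proved the equivalence for arbitrary cadlag $X$; that hypothesis is presumably needed for the companion results in \citep{Lowther1} (such as the left-limit statement of Lemma \ref{lemma:process contained in msupp}) rather than for this lemma.
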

\begin{proof}See \citep{Lowther1}, Lemma 4.7.
\end{proof}
We also make use of the following result, which says that it is not possible for $Y$ to approach $Z$ from below and then jump downwards to a value strictly less than $Z$.
\begin{lemma}\label{lemma:mgale never crosses msupp then ac:increasing times}
Let $X$ be a cadlag real valued process which is continuous in probability, and such that the set
\begin{equation*}
\left\{(t,x)\in\halfplane:X_{t-}<x<X_t\textrm{ or }X_t<x<X_{t-}\right\}
\end{equation*}
is disjoint from $\msupport{X}$ with probability one.

Also, let $Y$ and $Z$ be independent cadlag processes each with the same distribution as $X$.
For any $s\in\reals_+$ let $T$ be the random time
\begin{equation*}
T = \left\{
\begin{array}{ll}
\inf\{t\in\reals_+:t\ge s,\,Y_t\ge Z_t\},& \textrm{if }Y_s<Z_s,\\
\infty,&\textrm{otherwise},
\end{array}
\right.
\end{equation*}
and $(T_n)_{n\in\nat}$ be the random times
\begin{equation*}
T_n = \left\{
\begin{array}{ll}
\inf\{t\in\reals_+:t\ge s,\,Y_t+1/n\ge Z_t\},& \textrm{if }Y_s<Z_s,\\
\infty,&\textrm{otherwise}.
\end{array}
\right.
\end{equation*}
Then $T_n\uparrow T$ as $n\rightarrow\infty$ (a.s.). Also, $T_n<T$ whenever $T<\infty$ and $Y_T\not=Z_T$ (a.s.).
\end{lemma}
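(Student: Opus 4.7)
The plan is to observe that $T_n$ is nondecreasing in $n$ and bounded above by $T$, so that $T_\infty:=\lim_n T_n\le T$ exists, and then to argue by contradiction for both assertions, in each case reducing to a strict-crossing event ruled out by Lemma \ref{lemma:never crosses MSupp cond}. Monotonicity and the bound $T_n\le T$ are direct from the definitions, since tightening $1/n$ shrinks the defining level set and $\{t\ge s:Y_t\ge Z_t\}\subseteq\{t\ge s:Y_t+1/n\ge Z_t\}$.

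For the second assertion I assume $T<\infty$, $Y_T\ne Z_T$, and $T_n=T$ for some $n$. Right-continuity of $Y-Z$ at the infimum $T$ gives $Y_T\ge Z_T$, so the hypothesis forces $Y_T>Z_T$; meanwhile $Y_u+1/n<Z_u$ for $u\in[s,T)$ passes to the left limit to yield $Y_{T-}<Z_{T-}$. Combined, $Y-Z$ has a strictly positive jump at $T$. By Lemma \ref{lemma:no simultaneous jumps} exactly one of $Y,Z$ jumps there: if it is $Y$ (so $Z_{T-}=Z_T$) we get $Y_{T-}<Z_T<Y_T$; if it is $Z$ (so $Y_{T-}=Y_T$) we get $Z_T<Y_T<Z_{T-}$. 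Either is a strict jump-crossing, which by Lemma \ref{lemma:never crosses MSupp cond} (applied as stated, and via $X\mapsto -X$ to cover the down-direction) has probability zero, a contradiction.

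For the first assertion I suppose $\tilde\PP(T_\infty<T)>0$ and derive a contradiction. On this event $Y_s<Z_s$ and $Y_u<Z_u$ for every $u\in[s,T)$, so in particular $Y_{T_\infty}<Z_{T_\infty}$. I split on the behaviour of $T_n$ near $T_\infty$. If $T_n=T_\infty$ for all large $n$, right-continuity at the infimum gives $Y_{T_\infty}+1/n\ge Z_{T_\infty}$, forcing $Y_{T_\infty}\ge Z_{T_\infty}$ in the limit, a contradiction. Otherwise $T_n<T_\infty$ strictly along a subsequence; along it $Y_{T_n}\to Y_{T_\infty-}$ and $Z_{T_n}\to Z_{T_\infty-}$, and combining $Y_{T_n}+1/n\ge Z_{T_n}$ with $Y_u<Z_u$ for $u<T_\infty$ pinches the left limits to $Y_{T_\infty-}=Z_{T_\infty-}$. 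Together with $Y_{T_\infty}<Z_{T_\infty}$ and Lemma \ref{lemma:no simultaneous jumps}, exactly one of $Y,Z$ must jump at $T_\infty$ (if neither did, we would get $Y_{T_\infty}=Z_{T_\infty}$).

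The main obstacle, and where I expect the real work to be, is closing these two surviving subcases. Say $Y$ jumps (the other is symmetric): then $Z_{T_\infty}=Z_{T_\infty-}=Y_{T_\infty-}$ and $Y$ jumps strictly downward to $Y_{T_\infty}<Z_{T_\infty}$, but the boundary equality $Z_{T_\infty}=Y_{T_\infty-}$ means $Z_{T_\infty}$ sits at the edge of the jump interval rather than strictly inside it, so Lemma \ref{lemma:never crosses MSupp cond} does not apply verbatim. My plan to close this is to approximate the boundary event by strict-crossing ones, applying Lemma \ref{lemma:finite dist of ac 2 pathwise} with open sets $U_m=\{(y,z):y+1/m<z\}$ and a small product neighbourhood $V$ about the limit configuration $(Y_{T_\infty},Z_{T_\infty})$: the pinching $Z_{T_n}-Y_{T_n}\le 1/n$ along the subsequence should, for suitably chosen $m$, exhibit a strictly-crossed configuration at an earlier stopping time with positive probability, contradicting the no-crossing conclusion of Lemma \ref{lemma:never crosses MSupp cond}.
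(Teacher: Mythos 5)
First, note that the paper does not actually prove this lemma: its ``proof'' is the citation to \citep{Lowther1}, Lemma 4.9, so there is no internal argument to compare yours against and your attempt must be judged on its own. Your proof of the second assertion is correct and complete: on the event $\{T_n=T<\infty,\ Y_T\neq Z_T\}$ you correctly extract $Y_{T-}<Z_{T-}$ and $Y_T>Z_T$, use Lemma \ref{lemma:no simultaneous jumps} to isolate which process jumps, and land in one of the two strict straddling events killed by Lemma \ref{lemma:never crosses MSupp cond} (applied to $X$ and to $-X$). The skeleton of the first assertion is also sound: monotonicity gives $T_\infty:=\lim_n T_n\le T$, and on $\{T_\infty<T\}$ the pinching $0<Z_{T_n}-Y_{T_n}\le 1/n$ correctly forces $Y_{T_\infty-}=Z_{T_\infty-}$ while $Y_{T_\infty}<Z_{T_\infty}$, so exactly one of $Y,Z$ jumps at $T_\infty$ and the non-jumping process sits exactly at an endpoint of the other's jump interval.

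Those two surviving subcases are, however, a genuine gap, and your proposed repair does not close it. Lemma \ref{lemma:finite dist of ac 2 pathwise} is only available for measures satisfying statement \ref{item:lemma:ac cond} of Lemma \ref{lemma:finite dist of ac process}, whereas the lemma you are proving assumes only the marginal-support condition; you therefore cannot invoke it here without weakening the statement. (It happens that statement \ref{item:lemma:ac cond} is in force at the one point where this lemma is applied in the paper, so a version carrying that extra hypothesis would still serve the paper's purposes, but it would not be the lemma as stated and you would need to say so.) Moreover, even granting that hypothesis, your sketch stops short of an argument: to derive a contradiction you must produce a specific open set $U$, window $V$ and stopping time for which the boundary configuration $Y_{T_\infty-}=Z_{T_\infty-}=Z_{T_\infty}>Y_{T_\infty}$ yields, with positive probability, a configuration with one process strictly inside the open jump interval of the other, and the estimate $Z_{T_n}-Y_{T_n}\le 1/n$ alone does not obviously do this. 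As written, the first assertion remains unproved; the missing step is precisely what Lemma 4.9 of \citep{Lowther1} is being cited to supply.
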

\begin{proof}
See \citep{Lowther1}, Lemma 4.9.
\end{proof}

We can now combine these results to prove Lemma \ref{lemma:finite dist of ac process}. As we mentioned previously, the idea is to show that it is not possible for $Y$ to approach $Z$ from below and then jump past it.
\begin{proof}
First, statement \ref{item:lemma:acd} implies statement \ref{item:lemma:ac cond} by Lemma \ref{lemma:acd gives finite dist of ac process}. So, we now suppose that statement \ref{item:lemma:ac cond} holds. Then by Lemma \ref{lemma:no jumps across Z} we have
\begin{equation*}
\tilde\PP\left(\exists t>0\rm{\ s.t.\,}Y_{t-}<Z_t<Y_t\right)=0.
\end{equation*}
Applying Lemma \ref{lemma:never crosses MSupp cond} shows that the set
\begin{equation*}
\left\{(t,x)\in\halfplane:X_{t-}<x<X_t\right\}
\end{equation*}
is disjoint from $\msupport{X}$ with probability one. Similarly, we can apply the same argument to $-X$ to see that
\begin{equation*}
\left\{(t,x)\in\halfplane:X_{t}<x<X_{t-}\right\}
\end{equation*}
is also disjoint from $\msupport{X}$ with probability one. Therefore, the requirements of Lemma \ref{lemma:mgale never crosses msupp then ac:increasing times} are satisfied. For any $s\in\reals_+$ and $n\in\nat$ let $T$ and $T_n$ be the stopping times defined by Lemma \ref{lemma:mgale never crosses msupp then ac:increasing times}. Also, define the stopping times
\begin{align*}
&S_n=\left\{
\begin{array}{ll}
T_n,&\textrm{if $T_n<T$},\\
\infty,&\textrm{otherwise},
\end{array}
\right.\\
&S=\left\{
\begin{array}{ll}
T,&\textrm{if $T_n<T$ for every $n\in\nat$},\\
\infty,&\textrm{otherwise}.
\end{array}
\right.
\end{align*}
By Lemma \ref{lemma:mgale never crosses msupp then ac:increasing times}, $T_n\uparrow T$, and so $S_n\uparrow\uparrow S$ whenever $S<\infty$.
Now let $A$ be the set
\begin{equation*}
A = \left\{a\in\reals:\tilde\PP\left(S<\infty,\,Z_{S-}=a\right)=0\right\}.
\end{equation*}
As $\reals\setminus A$ is countable, we see that $A$ is a dense subset of $\reals$.
We now choose any $b<c<d\in A$ and set
\begin{align*}
&U = \left\{(x,y)\in\reals:x<y<c\right\},\\
&V = (b,c)\times(c,d).
\end{align*}
Now fix any $t>s$ and let $S^\prime_n$ be the stopping time,
\begin{equation*}
S^\prime_n=\left\{
\begin{array}{ll}
S_n,&\textrm{if }S_n\le t,\\
\infty,&\textrm{otherwise}.
\end{array}
\right.
\end{equation*}
Also let $\tau^U_{S^\prime_n}$ be the stopping time defined by equation (\ref{eqn:lemma:finite dist of ac 2 pathwise:1}). Then, by Lemma \ref{lemma:finite dist of ac 2 pathwise}
\begin{equation}\label{eqn:proof:finite dist of ac process:1}
\tilde\PP\left(\tau^U_{S^\prime_n}<\infty,\,(Z_{\tau^U_{S^\prime_n}},Y_{\tau^U_{S^\prime_n}})\in V\right)
\le\tilde\PP\left(\tau^U_{S^\prime_n}<\infty,\,(Y_{\tau^U_{S^\prime_n}},Z_{\tau^U_{S^\prime_n}})\in V\right).
\end{equation}
Also, if $S<\infty$ then $S=T$ so, by the definition of $T$ we have $Y_S\ge Z_S$. So, $(Y_{S},Z_{S})\not\in U$. Now consider the following cases,
\begin{itemize}
\item $S\le t$ and $Z_{S-}<c$. Then, as $S_n\uparrow\uparrow S$, we see that $\tau^U_{S^\prime_n}=S$ for large $n$.
\item $S\le t$ and $Z_{S-}> c$. Then $Z_{S_n}>c$ for large $n$ and so $\tau^U_{S^\prime_n}=\infty$.
\item $S> t$. Then $S^\prime_n=\infty$ for large $n$.
\end{itemize}
The case where $Z_{S-}=c$ is ruled out because we chose $c\in A$. 
Therefore, we can take the limit as $n$ goes to infinity in inequality (\ref{eqn:proof:finite dist of ac process:1}),
\begin{align*}
&\tilde\PP\left(S\le t,\,Z_{S-}<c,\,(Z_S,Y_S)\in V\right)\\
&\le\tilde\PP\left(S\le t,\,Z_{S-}<c,\,(Y_S,Z_S)\in V\right)
\end{align*}
As $Y_S\ge Z_S$, the right hand side of this inequality is $0$,
\begin{equation*}
\tilde\PP\left(S\le t,\,Z_{S-}<c,\,b<Z_S<c<Y_S<d\right)=0.
\end{equation*}
Therefore, letting $B$ be any countable and dense subset of $A$,
\begin{align*}
&\tilde\PP\left(S\le t,\,Z_{S-}=Z_S<Y_S\right)\\
\le{}&\sum_{b<c<d\in B}\PP\left(S\le t,\,b<Z_{S-}=Z_S<c<Y_S<d\right)\\
={}&0.
\end{align*}
This shows that it is not possible for $Y$ to approach $Z$ from below, then jump upwards to above $Z$.
Similarly, replacing $(Y,Z)$ by $(-Z,-Y)$ in the above argument gives
\begin{equation*}
\tilde\PP\left(S\le t,\,Z_S<Y_S=Y_{S-}\right)=0.
\end{equation*}
Lemma \ref{lemma:no simultaneous jumps} says that $Z_{S-}=Z_S$ or $Y_{S-}=Y_S$ whenever $S<\infty$,
\begin{equation*}\begin{split}
\tilde\PP\left(S\le t\,,\,Z_S<Y_S\right)={}&\tilde\PP\left(S\le t,\,Z_{S-}=Z_S<Y_S\right)\\
&+\tilde\PP\left(S\le t,\,Z_S<Y_S=Y_{S-}\right)\\
={}&0.
\end{split}\end{equation*}
That is, $Y_S=Z_S$ whenever $S\le t$ (a.s.).
Finally, from the statement of Lemma \ref{lemma:mgale never crosses msupp then ac:increasing times}, we know that $Y_T=Z_T$ whenever $T_n=T<\infty$. So, $Y_T=Z_T$ whenever $T\le t$.
So whenever $Y_s<Z_s$ and $Z_t<Y_t$ we have $s<T<t$ and $Z_T=Y_T$. Therefore
\begin{equation*}
\tilde\PP\left(Y_s<Z_s,\,Y_t>Z_t{\rm\ and\ }Y_u\not= Z_u\textrm{ for every }u\in(s,t)\right)=0.\qedhere
\end{equation*}
\end{proof}

We now move on to the proof of Lemma \ref{lemma:lim of acd is ac}. We start off by considering the case where the finite distributions converge everywhere, rather than just on a dense subset of $\reals_+$.
\begin{lemma}\label{lemma:cond2 preserved under lim}
Let $(\PP_n)_{n\in\nat}$ be probability measures on $(\D,\setsF)$ which satisfy property \ref{item:lemma:ac cond} of Lemma \ref{lemma:finite dist of ac process}. If $\PP_n\rightarrow\PP$ in the sense of finite-dimensional distributions, then $\PP$ also satisfies this property.
\end{lemma}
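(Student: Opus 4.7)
My plan is a three-stage approximation: first use portmanteau to pass \eqref{eqn:lemma:finite dist of ac process:1} from $\PP_n$ to $\PP$ at tuples where the marginals of $X_s,X_t$ have no atoms at the thresholds, then remove the continuity restrictions one parameter at a time. Call $(a,b,c,d,e)$ a \emph{continuity tuple} under $\PP$ if $\PP(X_s=a)=0$ and $\PP(X_t\in\{b,c,d,e\})=0$.

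I would first show that \eqref{eqn:lemma:finite dist of ac process:1} under $\PP$ holds at every continuity tuple. A functional monotone class argument (together with truncation $U\wedge N$ for the unbounded case) shows that the non-negative $\setsF_s$-measurable pairs $(U,V)$ satisfying \eqref{eqn:lemma:finite dist of ac process:1} at a given tuple form a positive cone in each variable and are closed under bounded monotone increasing limits, so it suffices to treat pairs $U=u(X_{t_1},\dots,X_{t_k})$, $V=v(X_{t_1},\dots,X_{t_k})$ with $u,v$ bounded, continuous, non-negative and $0\le t_1<\dots<t_k\le s$. For such $(U,V)$ and a continuity tuple, the joint law of $(X_{t_1},\dots,X_{t_k},X_s,X_t)$ under $\PP_n$ converges weakly to its law under $\PP$, and the discontinuity set of $u(X_{t_1},\dots,X_{t_k})\cdot 1_{\{X_s<a,\,d<X_t<e\}}$ is contained in the $\PP$-null set $\{X_s=a\}\cup\{X_t\in\{d,e\}\}$. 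Portmanteau then gives convergence of all four expectations appearing in \eqref{eqn:lemma:finite dist of ac process:1}, and the inequality passes.

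Next, with $a$ still a continuity point of $X_s$ but $b<c\le d<e$ arbitrary, I would choose continuity-point sequences $b_n\downarrow b$, $c_n\uparrow c$, $d_n\downarrow d$, $e_n\uparrow e$ of $X_t$ with $b_n<c_n\le d_n<e_n$ (possible because atoms of $X_t$ are countable). Then $1_{\{b_n<X_t<c_n\}}\uparrow 1_{\{b<X_t<c\}}$ and $1_{\{d_n<X_t<e_n\}}\uparrow 1_{\{d<X_t<e\}}$, so monotone convergence applied to each of the four expectations at $(a,b_n,c_n,d_n,e_n)$ produces the inequality at $(a,b,c,d,e)$.

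For arbitrary $a\in\reals$, I would exploit that $1_{\{X_s<a\}}$ and $1_{\{X_s>a\}}$ both vanish on $\{X_s=a\}$, so the expectations in \eqref{eqn:lemma:finite dist of ac process:1} are unchanged when $U,V$ are replaced by $U\cdot 1_{\{X_s\ne a\}}$ and $V\cdot 1_{\{X_s\ne a\}}$; it therefore suffices to establish the inequality assuming $U\cdot 1_{\{X_s=a\}}=V\cdot 1_{\{X_s=a\}}=0$. Then I would choose continuity points $a_n\uparrow a$ of $X_s$ and apply the previous step at $(a_n,b,c,d,e)$. Letting $n\to\infty$, $\E{U1_{\{X_s<a_n,\,d<X_t<e\}}}$ rises monotonically to $\E{U1_{\{X_s<a,\,d<X_t<e\}}}$, while $\E{V1_{\{X_s>a_n,\,b<X_t<c\}}}$ decreases to $\E{V1_{\{X_s\ge a,\,b<X_t<c\}}}$, which equals $\E{V1_{\{X_s>a,\,b<X_t<c\}}}$ because $V\cdot 1_{\{X_s=a\}}=0$; the other two terms behave symmetrically, and the inequality passes. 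The main obstacle is that $\PP$ is not assumed continuous in probability, so atoms of $X_s$ and $X_t$ can sit on the thresholds and the product inequality is neither monotone nor continuous in $(a,b,c,d,e)$: no single approximation scheme handles all five thresholds simultaneously, so the resolution combines two asymmetric devices --- shrinking the $X_t$-intervals from within, and killing the atom contribution at $X_s=a$ by exploiting the $\setsF_s$-measurability of $U,V$.
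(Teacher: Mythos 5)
Your proof is correct, and its skeleton is the same as the paper's: reduce by a monotone class argument to $U,V$ that are continuous bounded functions of finitely many coordinates in $[0,s]$, pass inequality (\ref{eqn:lemma:finite dist of ac process:1}) through the weak convergence of the finite-dimensional marginals, then recover arbitrary thresholds by monotone approximation. The one genuine difference is the mechanism for coping with possible atoms of $X_s$ and $X_t$ at the thresholds. The paper never selects continuity points: it applies the $\PP_n$-inequality with two levels $a_1<a_2$ (absorbing $1_{\{X_s<a_1\}}$ into the $\setsF_s$-measurable variable $U$) and strictly nested intervals $b'<c'\le d'<e'$, uses the one-sided portmanteau bounds --- lower semicontinuity for the open events on the left, upper semicontinuity for the closed events on the right --- to obtain a slightly weakened inequality under $\PP$, and then squeezes $a_1\uparrow a$, $a_2\downarrow a$, $b'\downarrow b$, and so on. You instead establish the exact inequality at continuity tuples via the mapping theorem and repair the thresholds afterwards, which is what forces your two asymmetric devices (shrinking the $X_t$-intervals from within, and killing the contribution of $\{X_s=a\}$ by exploiting that both $1_{\{X_s<a\}}$ and $1_{\{X_s>a\}}$ vanish there); both devices are sound, including the passage $\E{V1_{\{X_s>a_n,\cdot\}}}\downarrow\E{V1_{\{X_s\ge a,\cdot\}}}=\E{V1_{\{X_s>a,\cdot\}}}$. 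The paper's open/closed trick is marginally slicker in that atoms never need to be mentioned, while yours stays closer to the standard portmanteau toolkit. One small point to tighten: closure under positive combinations and increasing limits alone does not carry you from continuous cylinder functions to all non-negative $\setsF_s$-measurable $U,V$; you also need closure under bounded decreasing limits (or a $\pi$--$\lambda$ argument), which holds here by dominated convergence after your truncation, and which the paper records explicitly.
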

\begin{proof}
First choose $s<t\in\reals_+$ and real numbers $a$ and $b<c\le d<e$. Also choose times $t_1,t_2,\ldots,t_r\in[0,s]$ and non-negative continuous and bounded functions $u,v:\reals^r\rightarrow\reals$. We let $U,V$ be the $\setsF_s$-measurable random variables
\begin{align*}
&U=u(X_{t_1},X_{t_2},\ldots,X_{t_r}),\\
&V=u(X_{t_1},X_{t_2},\ldots,X_{t_r}).
\end{align*}
 Then, for any real numbers $a_1<a_2$ and $b^\prime<c^\prime\le d^\prime<e^\prime$ inequality (\ref{eqn:lemma:finite dist of ac process:1}) gives
 \begin{equation*}\begin{split}
&\EP{\PP_n}{U1_{\left\{X_s<a_1,\,d^\prime<X_t<e^\prime\right\}}}
 \EP{\PP_n}{V1_{\left\{X_s>a_2,\,b^\prime<X_t<c^\prime\right\}}}\\
={}&
 \EP{\PP_n}{\left(U1_{\left\{X_s<a_1\right\}}\right)1_{\left\{X_s<a_2,\,d^\prime<X_t<e^\prime\right\}}}
 \EP{\PP_n}{V1_{\left\{X_s>a_2,\,b^\prime<X_t<c^\prime\right\}}}\\
\le{}&
 \EP{\PP_n}{\left(U1_{\left\{X_s<a_1\right\}}\right)1_{\left\{X_s<a_2,\,b^\prime<X_t<c^\prime\right\}}}
 \EP{\PP_n}{V1_{\left\{X_s>a_2,\,d^\prime<X_t<e^\prime\right\}}}\\
={}& \EP{\PP_n}{U1_{\left\{X_s<a_1,\,b^\prime<X_t<c^\prime\right\}}}
 \EP{\PP_n}{V1_{\left\{X_s>a_2,\,d^\prime<X_t<e^\prime\right\}}}
 \end{split}\end{equation*}
If we take limits as $n$ goes to infinity and use convergence of the finite-dimensional distributions then this gives
\begin{equation*}\begin{split}
&\EP{\PP}{U1_{\left\{X_s<a_1,\,d^\prime<X_t<e^\prime\right\}}}
\EP{\PP}{V1_{\left\{X_s>a_2,\,b^\prime<X_t<c^\prime\right\}}}\\
\le{}&
\EP{\PP}{U1_{\left\{X_s\le a_1,\,b^\prime\le X_t\le c^\prime\right\}}}
\EP{\PP}{V1_{\left\{X_s\ge a_2,\,d^\prime\le X_t\le e^\prime\right\}}}.
\end{split}\end{equation*}
 Taking limits as $a_1\uparrow a$, $a_2\downarrow a$, $b^\prime\downarrow b$, $c^\prime\uparrow c$, $d^\prime\downarrow d$ and $e^\prime\uparrow e$ gives
\begin{equation*}\begin{split}
&\EP{\PP}{U1_{\left\{X_s<a,\,d<X_t<e\right\}}}
\EP{\PP}{V1_{\left\{X_s>a,\,b<X_t<c\right\}}}\\
\le{}&
\EP{\PP}{U1_{\left\{X_s< a,\,b< X_t< c\right\}}}
\EP{\PP}{V1_{\left\{X_s> a,\,d< X_t< e\right\}}}.
\end{split}\end{equation*}
Note that the set of pairs of random variables $(U,V)$ for which this inequality is true is closed under bounded limits, and under increasing limits. Therefore, it extends to all non-negative and $\setsF_s$-measurable random variables $(U,V)$.
\end{proof}
We now extend this result to the case where convergence is on a dense subset of $\reals_+$.
\begin{corollary}\label{cor:cond2 preserved under dense lim}
Let $(\PP_n)_{n\in\nat}$ be probability measures on $(\D,\setsF)$ which satisfy property \ref{item:lemma:ac cond} of Lemma \ref{lemma:finite dist of ac process}. If $\PP_n\rightarrow\PP$ in the sense of finite-dimensional distributions on a dense subset of $\reals_+$, then $\PP$ also satisfies this property.
\end{corollary}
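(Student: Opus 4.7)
The plan is to adapt the proof of Lemma \ref{lemma:cond2 preserved under lim}, inserting an extra outer approximation to handle the possibility that $s$ and $t$ do not lie in the dense convergence set $S$. The right-continuity of paths in $\D$ is what makes this work: one can approach $s$ and $t$ from the right through $S$, and $X_{s_k}\to X_s$, $X_{t_k}\to X_t$ then hold pointwise.

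In detail, fix $s<t\in\reals_+$, $a\in\reals$, $b<c\le d<e$, times $t_1,\ldots,t_r\in[0,s]$, and non-negative continuous bounded $u,v\colon\reals^r\to\reals$; set $U=u(X_{t_1},\ldots,X_{t_r})$ and $V=v(X_{t_1},\ldots,X_{t_r})$. Using density of $S$ I pick $s_k\downarrow s$ and $t_k\downarrow t$ in $S$ with $s<s_k<t_k$, and for each $i$ pick $t_i^{(k)}\in S$ with $t_i^{(k)}\downarrow t_i$ and $t_i^{(k)}\le s_k$ (possible once $k$ is large, since $t_i\le s<s_k$). Setting $U_k=u(X_{t_1^{(k)}},\ldots,X_{t_r^{(k)}})$ and $V_k$ analogously (both $\setsF_{s_k}$-measurable), I apply inequality (\ref{eqn:lemma:finite dist of ac process:1}) for $\PP_n$ at times $s_k,t_k$ with strict approximating parameters $a_1<a<a_2$ and $b<b'<c'\le d'<e'<e$. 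All the evaluation times lie in $S$, so letting $n\to\infty$ and invoking the Portmanteau theorem (with thresholds chosen among the co-countable set of continuity points of the relevant one-dimensional $\PP$-marginals of $X_{s_k}$ and $X_{t_k}$) yields exactly the same chain of inequalities as in the proof of Lemma \ref{lemma:cond2 preserved under lim}, but with $(s_k,t_k,U_k,V_k)$ in place of $(s,t,U,V)$.

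Now I send $k\to\infty$. Because paths in $\D$ are cadlag, $X_{s_k}\to X_s$, $X_{t_k}\to X_t$ and $X_{t_i^{(k)}}\to X_{t_i}$ almost surely, so $U_k\to U$ and $V_k\to V$ almost surely. Choosing the thresholds $a_1,a_2,b',c',d',e'$ also outside the at-most-countable sets of atoms of the $\PP$-laws of $X_s$ and $X_t$, each indicator converges $\PP$-almost surely (e.g.\ $1_{\{X_{s_k}<a_1\}}\to 1_{\{X_s<a_1\}}$ off $\{X_s=a_1\}$), and bounded convergence transfers the inequality to the original times $s,t$, still with the approximating parameters. The final step $a_1\uparrow a$, $a_2\downarrow a$, $b'\downarrow b$, $c'\uparrow c$, $d'\downarrow d$, $e'\uparrow e$ proceeds by monotone convergence on both sides, exactly as in Lemma \ref{lemma:cond2 preserved under lim}, giving inequality (\ref{eqn:lemma:finite dist of ac process:1}) for $\PP$ with this particular $U,V$; the monotone-class argument at the end of that lemma then extends it to all non-negative $\setsF_s$-measurable pairs.

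The only real obstacle is the familiar bookkeeping: at every one of the two weak-limit steps I must choose the thresholds to avoid the countable atom sets of the relevant marginals of $\PP$, which is always possible. Everything else is a direct combination of the argument of Lemma \ref{lemma:cond2 preserved under lim} with an outer right-approximation of $s$ and $t$ through the dense set $S$.
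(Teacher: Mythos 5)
Your proof is correct. The underlying mechanism is the same as the paper's --- both arguments exploit right-continuity of paths in $\D$ to approach arbitrary times from the right through the dense set $S$ --- but the organization is genuinely different. The paper packages the approximation as a deterministic time change: it builds right-continuous, non-decreasing maps $\theta_m\colon\reals_+\to S$ with $\theta_m(t)\downarrow t$, sets $X^m=X\circ\theta_m$, notes that the laws $\QQ_{n,m}$ of $X^m$ under $\PP_n$ still satisfy property 2 and that $\QQ_{n,m}\to\QQ_m$ everywhere as $n\to\infty$ while $\QQ_m\to\PP$ everywhere as $m\to\infty$, and then concludes by two black-box applications of Lemma \ref{lemma:cond2 preserved under lim}, with no further threshold bookkeeping. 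You instead inline the approximation: you approximate $s$, $t$ and the evaluation times of $U,V$ separately through $S$, rerun the chain of inequalities at level $k$, and add an extra bounded-convergence step as $k\to\infty$, justified by choosing the cut-offs outside the countable union of atom sets of the $\PP$-laws of $X_{s_k}$, $X_{t_k}$, $X_s$ and $X_t$ (the one point you rightly flag; since that union is countable the final monotone limit in the thresholds still goes through). Both routes are valid: the paper's is shorter because it reuses the everywhere-convergence lemma verbatim and confines all the $\epsilon$-management to one place, while yours is more self-contained and makes explicit, at the level of the inequality itself, exactly where right-continuity and the density of $S$ enter.
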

\begin{proof}
Let $S$ be a dense subset of $\reals_+$ such that $\PP_n\rightarrow\PP$ in the sense of finite-dimensional distributions on $S$.
Then, for every $m\in\nat$ we can find a sequence $(t_{m,k})_{k\in\nat}\in S$ such that $(k-1)/m\le t_{m,k}< k/m$. We define
\begin{align*}
&\theta_m:\reals_+\rightarrow S,\\
&\theta_m(t)=\min\left\{t_{m,k}:k\in\nat,\,t_{m,k}> t\right\}.
\end{align*}
Then, $\theta_m$ is a right-continuous and non-decreasing function, so if we define the process $X^m_t\equiv X_{\theta(t)}$ then it is clear that property \ref{item:lemma:ac cond} of Lemma \ref{lemma:finite dist of ac process} is satisfied if we replace $X$ by $X^m$ under the measure $\PP_n$ (and use the natural filtration generated by $X^m$). Therefore, letting $\QQ_{n,m}$ be the measure on $(\D,\setsF)$ under which $X$ has the same distribution as $X^m$ has under $\PP_n$, then property \ref{item:lemma:ac cond} of Lemma \ref{lemma:finite dist of ac process} is satisfied for the measure $\QQ_{n,m}$. Also, for every $m\in\nat$, let $\QQ_m$ be the measure on $(\D,\setsF)$ under which $X$ has the same distribution as $X^m$ has under $\PP$.

As $\PP_n\rightarrow\PP$ in the sense of finite-dimensional distributions on $S$, then it follows that
\begin{equation}\label{proof:cond2 preserved under dense lim:1}
\QQ_{n,m}\rightarrow\QQ_m
\end{equation}
as $n\rightarrow\infty$, in the sense of finite-dimensional distributions (on all of $\reals_+$).
Also, $\theta_m(t)\ge t$ and $\theta_m(t)\rightarrow t$ as $m\rightarrow \infty$. Therefore, right-continuity of $X_t$ gives $X^m_t\rightarrow X_t$. So,
\begin{equation}\label{proof:cond2 preserved under dense lim:2}
\QQ_{m}\rightarrow\PP
\end{equation}
in the sense of finite-dimensional distributions as $m\rightarrow\infty$. Applying Lemma \ref{lemma:cond2 preserved under lim} to the limit (\ref{proof:cond2 preserved under dense lim:1}) tells us that $\QQ_m$ satisfies property \ref{item:lemma:ac cond} of Lemma \ref{lemma:finite dist of ac process}. Finally, applying Lemma \ref{lemma:finite dist of ac process} to limit (\ref{proof:cond2 preserved under dense lim:2}) shows that $\PP$ also satisfies this property.
\end{proof}

Now, we can finish off the proof of Lemma \ref{lemma:lim of acd is ac}.
\begin{proof}
As $X$ is an almost-continuous diffusion under each of the measures $\PP_n$, the second property of Lemma \ref{lemma:finite dist of ac process} is satisfied. Corollary \ref{cor:cond2 preserved under dense lim} then tells us that $\PP$ also satisfies this property. So, Lemma \ref{lemma:finite dist of ac process} says that $X$ is almost-continuous under $\PP$.
\end{proof}

We shall now give a quick proof of Lemma \ref{lemma:ac connected supp is cont}. First, we will make use of the following result that says that the paths of a process lie inside its marginal support.

\begin{lemma}\label{lemma:process contained in msupp}
Let $X$ be a cadlag real valued process. Then, with probability $1$, we have
\begin{equation*}
\left\{(t,X_t):t\in\reals_+\right\}\subseteq\msupport{X}.
\end{equation*}
Furthermore, if $X$ is continuous in probability then
\begin{equation*}
\left\{(t,X_{t-}):t\in\reals_+\right\}\subseteq\msupport{X}
\end{equation*}
with probability $1$.
\end{lemma}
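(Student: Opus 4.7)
The plan is to reduce the claim to countably many null events indexed by pairs of rationals. Since $(t, X_t(\omega)) \notin \msupport{X}$ if and only if there exist rationals $a < b$ with $X_t(\omega) \in (a,b)$ and $\Prob{X_t \in (a,b)} = 0$, it suffices to show that for each such fixed rational pair,
\begin{equation*}
\Prob{\exists t \geq 0 : X_t \in (a,b) \text{ and } t \in T_{a,b}} = 0,
\end{equation*}
where $T_{a,b} = \{t \geq 0 : \Prob{X_t \in (a,b)} = 0\}$. A countable union over $(a,b) \in \rats^2$ then yields the first assertion.

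The first step is to control the structure of $T_{a,b}$. Since $X$ is cadlag, $X_{t_n} \to X_t$ almost surely (hence in distribution) as $t_n \downarrow t$, so the Portmanteau theorem applied to the open set $(a,b)$ gives $\liminf_n \Prob{X_{t_n} \in (a,b)} \geq \Prob{X_t \in (a,b)}$, showing that $T_{a,b}^c$ is right-open. Consequently $T_{a,b}$ has countably many connected components, and the set $J_{a,b}$ consisting of its isolated points together with the right-included endpoints of multi-point components is countable (each such exceptional point admits a disjoint rational right-punctured neighbourhood inside $T_{a,b}^c$).

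Two countable families of null events now suffice. For each rational $q$ with $q \in T_{a,b}$, the event $\{X_q \in (a,b)\}$ is null by definition of $T_{a,b}$; taking the countable union over $q \in \rats$ and $a,b \in \rats$ gives a full-probability event $\Omega_1$ on which $X_q(\omega) \notin (a,b)$ whenever $q \in \rats \cap T_{a,b}$ and $a,b \in \rats$. Similarly, for each of the countably many $t \in J_{a,b}$ (over rational $a<b$), the event $\{X_t \in (a,b)\}$ is null, yielding a full-probability event $\Omega_2$ on which $X_t(\omega) \notin (a,b)$ for every such $t$. On $\Omega_1 \cap \Omega_2$, assume for contradiction that $X_{t_0}(\omega) \in (a,b)$ for some $t_0 \in T_{a,b}$. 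If $t_0 \in J_{a,b}$ this contradicts $\Omega_2$; otherwise $t_0$ is an interior or left-endpoint point of a multi-point component, so $(t_0, t_0 + \delta) \subseteq T_{a,b}$ for some $\delta > 0$. Picking rationals $q_n \downarrow t_0$ with $q_n \in (t_0, t_0+\delta) \cap T_{a,b}$, $\Omega_1$ forces $X_{q_n}(\omega) \notin (a,b)$, and right-continuity gives $X_{t_0}(\omega) = \lim_n X_{q_n}(\omega) \in \reals \setminus (a,b)$, the desired contradiction.

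The second assertion follows by the symmetric argument applied to the left-continuous process $t \mapsto X_{t-}$ in place of $X$, with left-neighbourhoods replacing right-neighbourhoods; the continuity-in-probability hypothesis enters twice, first to identify the distribution of $X_{t-}$ with that of $X_t$ so that $\msupport{X}$ remains the correct target set, and second to strengthen the Portmanteau step into showing that $T_{a,b}$ is also left-sequentially closed, allowing the symmetric structural analysis of its components. The main obstacle is isolating the countably many exceptional times at which no rational sequence inside $T_{a,b}$ approaches $t_0$ from the right; the right-openness of $T_{a,b}^c$ is the structural input that keeps this exceptional set countable and hence controllable by countable additivity.
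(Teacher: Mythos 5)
The paper does not actually prove this lemma --- it cites Lemmas 4.3 and 4.4 of \citep{Lowther1} --- so I can only assess your argument on its own terms, and it has a genuine gap at the structural step. From the right-openness of $T_{a,b}^c$ you conclude that $T_{a,b}$ has countably many connected components, and your endgame rests on the resulting dichotomy: either $t_0$ lies in the countable exceptional set $J_{a,b}$, or $(t_0,t_0+\delta)\subseteq T_{a,b}$ so that rationals of $T_{a,b}$ decrease to $t_0$. Neither the component count nor the dichotomy follows. Right-openness of $T_{a,b}^c$ does force $T_{a,b}^c$ to be a countable union of intervals of positive length, but the complement of such a set can be a Cantor-type set: it has uncountably many singleton components, no point of it has a right-neighbourhood contained in it, and only countably many of its points are right-isolated. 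For the uncountably many remaining points $t_0$ (right-accumulation points of $T_{a,b}$ that are not interior from the right), your argument supplies no sequence $q_n\downarrow t_0$ with $q_n\in T_{a,b}\cap\rats$ --- indeed $T_{a,b}$ may contain no rationals at all near $t_0$ --- so the contradiction via $\Omega_1$ and right-continuity cannot be run, and nothing you have established rules this configuration out.

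The repair stays close to your outline, but the countable family of times fed into the null events must be chosen inside $T_{a,b}$ rather than inside $\rats$. Two facts do the job. First, for any $A\subseteq\reals$ the set of $t\in A$ with $(t,t+\delta)\cap A=\emptyset$ for some $\delta>0$ is countable (for two such points $t<t'$ one has $t'\ge t+\delta_t$, so the punctured right-neighbourhoods are pairwise disjoint and each contains its own rational); take these right-isolated points as $J_{a,b}$ and control them by your $\Omega_2$. Second, $T_{a,b}$, being a subset of $\reals$, is separable, so it admits a countable subset $D_{a,b}$ meeting every nonempty relatively open set $T_{a,b}\cap(t_0,t_0+\delta)$; for every $t_0\in T_{a,b}\setminus J_{a,b}$ this yields $q_n\in D_{a,b}$ with $q_n\downarrow t_0$ strictly, and replacing $\rats\cap T_{a,b}$ by $D_{a,b}$ in the definition of $\Omega_1$ (still a countable union of null events, since $D_{a,b}\subseteq T_{a,b}$) lets your right-continuity argument go through verbatim. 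With this change the Portmanteau step becomes unnecessary for the first assertion; for the second, the left-handed analogue works, with continuity in probability entering exactly where you indicate, namely to identify $\support{X_{t-}}$ with $\support{X_t}$ and to make $\Prob{X_{t-}\in(a,b)}=0$ for $t\in T_{a,b}$.
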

\begin{proof}
See \citep{Lowther1}, Lemma 4.3 and Lemma 4.4.
\end{proof}

The proof of Lemma \ref{lemma:ac connected supp is cont} follows easily.
\begin{proof}
First, by the statement of the lemma, there exists a countable $S\subseteq\reals_+$ such that $\support{X_t}$ is connected for every $t\in\reals_+\setminus S$. As $X$ is cadlag, there exist stopping times $(\tau_n)_{n\in\nat}$ such that the jump times of $X$ are almost-surely contained in $\cup_{n\in\nat}\slbrack \tau_n\srbrack$ (see \citep{HeWangYan} Theorem 3.32). By Lemma \ref{lemma:process contained in msupp} we have
\begin{equation*}
(\tau_n,X_{\tau_n}), (\tau_n,X_{\tau_n-})\in\msupport{X}
\end{equation*}
whenever $\tau_n<\infty$ (a.s.). Also, by almost-continuity, the second condition of Lemma \ref{lemma:never crosses MSupp cond} is satisfied, and therefore the set
\begin{equation*}
\left\{(\tau_n,x):x\in\reals,\,X_{\tau_n-}<x<X_{\tau_n}\right\}
\end{equation*}
is almost-surely disjoint from the marginal support of $X$, whenever $\tau_n<\infty$.
So, the connected open components of the complement of the set
\begin{equation*}
A=\left\{x\in\reals:(\tau_n,x)\in\msupport{X}\right\}
\end{equation*}
includes the interval $(X_{\tau_n-},X_{\tau_n})$ whenever $\tau_n<\infty$ and $X_{\tau_n-}<X_{\tau_n}$, so $A$ is not connected in this case and we see that $\tau_n\in S$.
Therefore, the continuity in probability of $X$ gives
\begin{equation*}
\Prob{\tau_n<\infty,\,X_{\tau_n-}<X_{\tau_n}}\le\sum_{t\in S}\Prob{\tau_n=t,\,X_{t-}<X_t}=0.
\end{equation*}
Similarly, applying the same argument to $-X$,
\begin{equation*}
\Prob{\tau_n<\infty,\,X_{\tau_n-}>X_{\tau_n}}=0.
\end{equation*}
So $X_{\tau_n-}=X_{\tau_n}$ (a.s.) whenever $\tau_n<\infty$, which shows that $X$ is continuous.
\end{proof}

\section{The Strong Markov Property}
\label{sec:sm}

The aim of this section is to complete the proof of Theorem \ref{thm:limit of acd is acd} by showing that the process $X$ is strong Markov under the measure $\PP$. To do this, we make use of the property that conditional expectations of Lipschitz continuous functions of $X_t$ are themselves Lipschitz continuous.
In this definition we write $f^\prime$ and $g^\prime$ to denote the derivatives $df(x)/dx$ and $dg(x)/dx$ in the measure-theoretic sense, which always exist for Lipschitz continuous functions.

\begin{definition}\label{defn:lipschitz prop}
Let $X$ be any real valued and adapted stochastic process. We shall say that it satisfies the \emph{Lipschitz property} if for all $s<t\in\reals_+$ and every bounded Lipschitz continuous $g:\reals\rightarrow\reals$ with $|g^\prime|\le 1$, there exists a Lipschitz continuous $f:\reals\rightarrow\reals$ with $|f^\prime|\le 1$ and,
\begin{equation*}
f(X_s)=\E{g(X_t)|\setsF_s}.
\end{equation*}
\end{definition}
The reason that we use this property is that it is preserved under taking limits in the sense of finite-dimensional distributions (as we shall see), and is a sufficient condition for the process $X$ to be strong Markov.
\begin{lemma}\label{lemma:lipschitz prop gives sm}
Let $X$ be a cadlag adapted real valued process that satisfies the Lipschitz property. Then, it is strong Markov.
\end{lemma}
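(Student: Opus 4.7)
My plan is to promote the Lipschitz property into the strong Markov property in three stages: first, enlarge the class of bounded test functions $g$; second, build a jointly measurable candidate $f:\halfplane\to\reals$; third, verify the strong Markov identity at a general finite stopping time by dyadic approximation.

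For the first stage, scaling $g$ by its Lipschitz constant extends the Lipschitz property to every bounded Lipschitz $g$. Then the family of bounded Borel $g$ for which $\E{g(X_u)\mid\setsF_s}$ is $\sigma(X_s)$-measurable is closed under bounded monotone limits and contains all the Lipschitz functions, so by a monotone class argument it contains every bounded Borel $g$. Hence $X$ is Markov in the ordinary sense, and in what follows we may restrict attention to $g$ bounded and $1$-Lipschitz.

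For the second and third stages, fix a bounded $1$-Lipschitz $g$ and increment $t>0$. For each dyadic rational $q\in D:=\bigcup_{n\in\nat}2^{-n}\ints_{\ge 0}$, the Lipschitz property provides a $1$-Lipschitz $\phi_q$ with $|\phi_q|\le\|g\|_\infty$ and $\phi_q(X_q)=\E{g(X_{q+t})\mid\setsF_q}$ almost surely; by countability of $D$ all these identities hold outside one common null set. Set $q_n(s):=2^{-n}\lceil 2^n s\rceil$ and define $f(s,x):=\limsup_{n\to\infty}\phi_{q_n(s)}(x)$; this is jointly Borel measurable with $1$-Lipschitz, $\|g\|_\infty$-bounded slices, and $f(s,\cdot)=\phi_s$ whenever $s\in D$. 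Given a finite stopping time $\tau$, set $\tau_n:=q_n(\tau)\in D$, a dyadic stopping time with $\tau_n\downarrow\tau$ that is $\setsF_\tau$-measurable as a Borel function of $\tau$. Summing the Markov identity over the countably many values of $\tau_n$ gives $\phi_{\tau_n}(X_{\tau_n})=\E{g(X_{\tau_n+t})\mid\setsF_{\tau_n}}$. The uniform Lipschitz bound gives $|\phi_{\tau_n}(X_{\tau_n})-\phi_{\tau_n}(X_\tau)|\le|X_{\tau_n}-X_\tau|\to 0$ almost surely, and bounded convergence together with continuity of $g$ and right-continuity of $X$ yields $\E{g(X_{\tau_n+t})\mid\setsF_\tau}\to\E{g(X_{\tau+t})\mid\setsF_\tau}$ in $L^1$; conditioning on $\setsF_\tau\subseteq\setsF_{\tau_n}$ then gives $\phi_{\tau_n}(X_\tau)\to\E{g(X_{\tau+t})\mid\setsF_\tau}$ in $L^1$.

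The main obstacle, in my view, is reconciling this $L^1$-limit with the pointwise $\limsup$ that defines $f(\tau,X_\tau)=\limsup_n\phi_{\tau_n}(X_\tau)$, since in general a pointwise limsup can strictly exceed a probability limit. I would handle this either by refining the construction of $f$ via a regular conditional probability kernel, exploiting the Polish structure of the state space and a measurable-selection argument to obtain joint measurability in $(s,x)$ directly, or by a diagonal subsequence extraction that uses the uniform Lipschitz and boundedness estimates to force the pointwise limsup to agree almost surely with the $L^1$ limit. Once the strong Markov identity is established for every bounded $1$-Lipschitz $g$, the monotone class argument from the first stage extends it to all bounded Borel $g$, completing the proof.
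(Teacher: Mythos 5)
Your overall strategy is the same as the paper's: establish the identity at countably many dyadic deterministic times, approximate a general finite stopping time from the right by dyadic-valued stopping times, and use the uniform $1$-Lipschitz bounds together with right-continuity of the paths to pass to the limit. The first stage and the $L^1$-convergence computation in your third stage are correct. But the step you yourself flag as ``the main obstacle'' is a genuine gap, and it is precisely the crux of the lemma: you must show that $\limsup_n\phi_{q_n(\tau)}(X_\tau)$ coincides almost surely with the $L^1$-limit $\E{g(X_{\tau+t})\mid\setsF_\tau}$, and none of the three remedies you sketch is carried out. The diagonal-subsequence idea does not suffice as stated, because $f$ is defined through the $\limsup$ over the \emph{full} sequence, while an $L^1$-limit only controls a subsequence almost surely; the regular-conditional-probability route is a different and heavier argument that you would still need to make compatible with evaluation at a stopping time.

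The paper closes exactly this gap with two observations. First, for a fixed deterministic time $u$ and any sequence $u_n\downarrow\downarrow u$, the convergence $f(u_n,X_u)\rightarrow f(u,X_u)$ in probability (obtained by conditioning the $\setsF_{u+}$-identity back onto $\setsF_u$ and noting that $f(u_n,X_u)$ is already $\setsF_u$-measurable) together with the uniform $1$-Lipschitz bound forces genuine pointwise convergence $f(u_n,x)\rightarrow f(u,x)$ for every $x$ in the support of $X_u$: if the values stayed $\epsilon$ apart at some support point, equicontinuity would keep them at least $\epsilon/2$ apart on a neighbourhood charged by the law of $X_u$, contradicting convergence in probability. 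Hence $t\mapsto f(t,x)$ is right-continuous in $t$ on the marginal support of $X$. Second, Lemma \ref{lemma:process contained in msupp} guarantees that $(t,X_t)\in\msupport{X}$ for all $t$ simultaneously, almost surely, so $f(t,X_t)$ is a right-continuous process and its evaluation at the stopping time $\tau$ is an almost-sure (not merely $L^1$) right limit of $f(\tau_n,X_{\tau_n})$, which identifies it with $\E{g(X_{\tau+t})\mid\setsF_\tau}$. If you add these two steps --- the equicontinuity-on-the-support argument and the appeal to the marginal-support lemma --- your proof is complete; without them the conclusion does not follow.
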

\begin{proof}
Choose any $s>0$ and Lipschitz continuous and bounded $g:\reals\rightarrow\reals$. By the Lipschitz property there exists an $f:\halfplane\rightarrow\reals$ such that $f(t,x)$ is Lipschitz continuous in $x$ and
\begin{equation}\label{eqn:pf:lipschitz prop gives sm}
f(\tau,X_\tau)=\E{g(X_{\tau+s})|\setsF_\tau}\ \textrm{ (a.s.)}
\end{equation}
for every $\tau\in\reals_+$. By linearity, this extends to all stopping times $\tau$ that take only finitely many values in $\reals_+$. We shall show that $f(t,x)$ is right-continuous in $t$ on the marginal support of $X$. So, pick any $t\ge 0$ and sequence $t_n\downarrow\downarrow t$.
By the right-continuity of $X$ and uniform continuity of $f(t,x)$ and $g(x)$ in $x$,
\begin{equation*}
\E{g(X_{t+s})|\setsF_{t+}}=\lim_{n\rightarrow\infty} f(t_n,X_{t_n})=\lim_{n\rightarrow\infty} f(t_n,X_t)
\end{equation*}
where convergence is in probability. Taking conditional expectations with respect to $\setsF_t$,
\begin{equation*}
\lim_{n\rightarrow\infty} f(t_n,X_t)=\E{g(X_{t+s})|\setsF_t}=f(t,X_t).
\end{equation*}
By uniform continuity in $x$, this shows that $f(t_n,x)\rightarrow f(t,x)$ for every $x$ in the support of $X_t$ and it follows that $f(t,x)$ is right-continuous in $t$ on the marginal support of $X$.
So Lemma \ref{lemma:process contained in msupp} shows that $f(t,X_t)$ is a right-continuous process and, by taking right limits in $\tau$, (\ref{eqn:pf:lipschitz prop gives sm}) extends to all finite stopping times $\tau$. Finally, the monotone class lemma extends this to all measurable and bounded $g$.
\end{proof}
We now prove the Lipschitz property for the case where the drift term $b(t,x)$ is decreasing in $x$.
\begin{lemma}\label{lemma:acd gives lip}
Let $\PP$ be a probability measure on $(\D,\setsF)$ under which $X$ is an almost-continuous diffusion which decomposes as
\begin{equation*}
X_t=M_t+\int_0^t b(s,X_s)\,ds
\end{equation*}
where $M$ is a local martingale and $b:\halfplane\rightarrow\reals$ is locally integrable such that $b(t,x)$ is decreasing in $x$.

Then, $X$ satisfies the Lipschitz property under $\PP$.
\end{lemma}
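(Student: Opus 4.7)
The plan is to derive the Lipschitz property from a coupling of two independent copies of $X$ on the product space, combining the drift monotonicity with almost-continuity. Since $X$ is an almost-continuous diffusion under $\PP$, the strong Markov property (via Lemma 2.1 of \citep{Lowther1}) gives, for the fixed $s<t$ and given $g$, a measurable $F\colon[0,t]\times\reals\to\reals$ with $F(s,X_s)=\E{g(X_t)\mid\setsF_s}$. Setting $f(x)=F(s,x)$, the task reduces to showing that $f$ coincides on $\support{X_s}$ with a $1$-Lipschitz function on $\reals$ --- a McShane--Whitney extension then supplies a $1$-Lipschitz $f$ defined on all of $\reals$.

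Working on $(\D^2,\setsF^2,\tilde\PP)$ with the coordinate processes $Y,Z$, the target inequality is
\begin{equation*}
\Et{W\bigl(g(Y_t)-g(Z_t)\bigr)1_{\{Y_s>Z_s\}}}\le\Et{W(Y_s-Z_s)1_{\{Y_s>Z_s\}}}
\end{equation*}
for every bounded non-negative $\setsF^2_s$-measurable $W$, together with the symmetric statement on $\{Y_s<Z_s\}$. A monotone class manipulation of the type used in Lemma \ref{lemma:finite dist ac 2 dbl} then converts these into the pointwise bound $|f(y)-f(z)|\le|y-z|$ for $y,z\in\support{X_s}$.

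To prove the displayed inequality, I would introduce the first meeting time $T=\inf\{u\ge s:Y_u\le Z_u\}$ on $\{Y_s>Z_s\}$ (and $T=\infty$ otherwise). Almost-continuity, applied exactly as in the proof of Lemma \ref{lemma:acd gives finite dist of ac process}, forces $Y_T=Z_T$ almost surely on $\{T<\infty\}$. Since $W$ is $\setsF^2_T$-measurable, Lemma 2.2 of \citep{Lowther1} extends the strong Markov representation to $\setsF^2$-stopping times, giving $\Et{g(Y_t)\mid\setsF^2_T}=F(T,Y_T)=F(T,Z_T)=\Et{g(Z_t)\mid\setsF^2_T}$ on $\{T\le t\}$. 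So the $\{T\le t\}$ contribution to the left-hand side vanishes and, using $|g'|\le1$, the fact that $Y_t>Z_t$ on $\{T>t\}$, and $Y_T=Z_T$ on $\{T\le t\}$, the left-hand side is bounded by $\Et{W(Y_{t\wedge T}-Z_{t\wedge T})1_{\{Y_s>Z_s\}}}$.

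It remains to bound this last quantity by $\Et{W(Y_s-Z_s)1_{\{Y_s>Z_s\}}}$. On $[s,T\wedge t]$ we have $Y_u\ge Z_u$, so the hypothesis that $b(u,x)$ is decreasing in $x$ gives $b(u,Y_u)\le b(u,Z_u)$ and makes the finite-variation part of $Y-Z$ non-increasing; localizing the $\setsF^2$-local martingale $M^Y-M^Z$ by $\tau_k\uparrow\infty$, optional stopping yields the bound at time $t\wedge T\wedge\tau_k$, and Fatou (using $Y_{u\wedge T}\ge Z_{u\wedge T}$ on $\{Y_s>Z_s\}$) removes $\tau_k$ in the limit. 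I expect the main obstacle to be this final bookkeeping: verifying that $M^Y-M^Z$ is a $\tilde\PP$-local martingale with respect to $\setsF^2$ (a consequence of the independence of $Y$ and $Z$), choosing a localizing sequence compatible with $T$, and arranging the Fatou step rigorously over the bounded horizon $[s,t]$.
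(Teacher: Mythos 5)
Your proposal is correct and follows essentially the same route as the paper: couple two independent copies on the product space, stop at the first crossing time where almost-continuity forces $Y_T=Z_T$, use the strong Markov representation at $T$ to kill that contribution, and exploit the monotonicity of $b$ to make the stopped difference a non-negative local supermartingale (the paper packages your localization/Fatou step by observing that the compensated difference is a non-negative local martingale, hence a supermartingale). The only cosmetic difference is that the paper derives the pointwise bound $|h(s,Z_s)-h(s,Y_s)|\le|Z_s-Y_s|$ directly rather than via test random variables $W$.
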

\begin{proof}
Fix any $s<t\in\reals_+$ and let $g:\reals\rightarrow\reals$ be bounded and Lipschitz continuous with $|g^\prime|\le 1$. As $X$ is strong Markov, there exists a measurable $h:[0,t]\times\reals\rightarrow\reals$ such that
\begin{equation*}
1_{\{\tau\le t\}}h(\tau,X_\tau)=1_{\{\tau\le t\}}\E{g(X_t)|\setsF_\tau}
\end{equation*}
for every stopping time $\tau$. This follows easily from the strong Markov property (see \citep{Lowther1}, Lemma 2.1).

We now let $(\D^2,\setsF^2,(\setsF^2_t)_{t\in\reals_+},\tilde\PP)$ be the filtered probability space defined by equations (\ref{eqn:D2 F2}), (\ref{eqn:F2t}) and (\ref{eqn:Ptilde}). We also let $Y,Z$ be the stochastic processes on $(\D^2,\setsF^2)$ defined by equation (\ref{eqn:Y Z}).
Then, $Y,Z$ are independent adapted cadlag processes each with the same distribution under $\tilde\PP$ as $X$ has under $\PP$. So they have the decompositions
\begin{equation}\label{eqn:proof:acd gives lip:1}\begin{split}
&Y_u=M^1_u+\int_0^u b(v,Y_v)\,dv,\\
& Z_u=M^2_u+\int_0^u b(v,Z_v)\,dv
\end{split}\end{equation}
for local martingales $M^1,M^2$. Furthermore, $Y,Z$ will also be strong Markov and satisfy
\begin{equation}\label{eqn:proof:acd gives lip:2}\begin{split}
&1_{\{\tau\le t\}}h(\tau,Y_\tau)=1_{\{\tau\le t\}}\Et{g(Y_t)|\setsF_\tau},\\
&1_{\{\tau\le t\}}h(\tau,Z_\tau)=1_{\{\tau\le t\}}\Et{g(Z_t)|\setsF_\tau}
\end{split}\end{equation}
for all $\setsF^2_\cdot$-stopping times $\tau$. This follows quite easily from the definitions of $Y,Z$ (see \citep{Lowther1}, Lemma 2.2).

Now let $\tau$ be the stopping time
\begin{equation*}
\tau=\inf\left\{u\in[s,\infty):Y_u\ge Z_u\right\}.
\end{equation*}
We note that $\{\tau>s\}=\{Y_s<Z_s\}$. Then equation (\ref{eqn:proof:acd gives lip:2}) gives
\begin{equation*}\begin{split}
1_{\{\tau>s\}}\left(h(s,Z_s)-h(s,Y_s)\right)
={}&1_{\{\tau>s\}}\Et{g(Z_t)-g(Y_t)|\setsF^2_s}\\
={}&\Et{1_{\{\tau\ge t\}}\left(g(Z_t)-g(Y_t)\right)|\setsF^2_s}\\
&+\Et{1_{\{t>\tau>s\}}\left(h(\tau,Z_\tau)-h(\tau,Y_\tau)\right)|\setsF^2_s}.
\end{split}\end{equation*}
The almost-continuity of $X$ gives $Y_\tau=Z_\tau$ whenever $t>\tau>s$, so
\begin{equation}\label{eqn:proof:acd gives lip:3}\begin{split}
1_{\{\tau>s\}}\left(h(s,Z_s)-h(s,Y_s)\right)
&=\Et{1_{\{\tau\ge t\}}\left(g(Z_t)-g(Y_t)\right)|\setsF^2_s}\\
&=\Et{1_{\{\tau>s\}}\left(g(Z_{t\wedge\tau})-g(Y_{t\wedge\tau})\right)|\setsF^2_s}\\
&\le\Et{1_{\{\tau>s\}}\left(Z_{t\wedge\tau}-Y_{t\wedge\tau}\right)|\setsF^2_s}.
\end{split}\end{equation}
Here, we made use of the condition that $|g^\prime|\le 1$.

Let $N$ be the local martingale
\begin{equation}\label{eqn:proof:acd gives lip:4}
N_u=1_{\{\tau>s\}}\left(M^2_{u\wedge\tau}-M^1_{u\wedge\tau}+\int_0^s\left(b(u,Z_u)-b(u,Y_u)\right)\,du\right)
\end{equation}
defined over $u\ge s$.
Then for every $u\ge s$, the condition that $b(u,x)$ is decreasing in $x$ gives
\begin{equation}\label{eqn:proof:acd gives lip:5}
N_u -1_{\{\tau>s\}}\left(Z_{u\wedge\tau}-Y_{u\wedge\tau}\right)
=\int_s^{u\wedge\tau}\left(b(v,Y_v)-b(v,Z_v)\right)\,dv\ge 0.
\end{equation}
So, substituting into inequality (\ref{eqn:proof:acd gives lip:3}),
\begin{equation}\label{eqn:proof:acd gives lip:6}
1_{\{\tau>s\}}\left(h(s,Z_s)-h(s,Y_s)\right)\le 1_{\{\tau>s\}}\left(Z_s-Y_s\right)+\E{N_t-N_s|\setsF^2_s}.
\end{equation}
Inequality (\ref{eqn:proof:acd gives lip:5}) shows that $N_u$ is a non-negative local martingale and is therefore a supermartingale. So $\E{N_t|\setsF_s}\le N_s$ and inequality (\ref{eqn:proof:acd gives lip:6}) gives
\begin{equation*}
h(s,Z_s)-h(s,Y_s)\le Z_s-Y_s
\end{equation*}
whenever $Y_s<Z_s$ (almost surely). Replacing $h$ by $-h$ in the above argument will also give the above inequality with $Y$ and $Z$ interchanged on the left hand side. Furthermore, the inequality still holds if we interchange $Y$ and $Z$ on both sides (by symmetry). So
\begin{equation*}
|h(s,Z_s)-h(s,Y_s)|\le| Z_s-Y_s|
\end{equation*}
(almost surely). As $Y_s,Z_s$ are independent and each have the same distribution as $X_s$, this shows that $h(s,\cdot)$ is Lipschitz continuous in an almost-sure sense. That is, there is a measurable $A\subseteq\reals$ such that $\Prob{X_s\in A}=1$ and such that
\begin{equation*}
|h(s,x)-h(s,y)|\le |x-y|
\end{equation*}
for every $x,y\in A$. Therefore we can define $f:\reals\rightarrow\reals$ by $f(x)=h(s,x)$ for all $x\in A$. By uniform continuity, this extends uniquely to the closure $\bar A$ of $A$ such that
\begin{equation*}
|f(x)-f(y)|\le|x-y|
\end{equation*}
for every $x,y\in\bar A$. Then we can extend $f$ linearly across each open interval in the complement of $\bar A$ so that $f$ is Lipschitz continuous with $|f^\prime|\le 1$. Finally, $f(X_s)=h(s,X_s)$ whenever $X_s\in A$ so,
\begin{equation*}
f(X_s)=h(s,X_s)=\E{g(X_t)|\setsF_s}.\qedhere
\end{equation*}
\end{proof}

We now extend this result to cover the case where $b(t,x)$ just satisfies the Lipschitz condition required by Theorem \ref{thm:limit of acd is acd}.
\begin{corollary}\label{cor:acd with K gives lip}
Let $X$ be an almost-continuous diffusion that decomposes as
\begin{equation*}
X_t=M_t+\int_0^t b(s,X_s)\,ds
\end{equation*}
where $M$ is a local martingale, $b:\halfplane\rightarrow\reals$ is locally integrable and such that there exists a $K\in\reals$ satisfying
\begin{equation*}
b(t,y)-b(t,x)\le K(y-x)
\end{equation*}
for every $t\in\reals_+$ and $x<y\in\reals$.

Then, $e^{-Kt}X_t$ satisfies the Lipschitz property.
\end{corollary}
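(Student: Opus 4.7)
The natural strategy is to reduce to Lemma \ref{lemma:acd gives lip} by making the time-dependent change of variables $\tilde X_t = e^{-Kt} X_t$. The factor $e^{-Kt}$ is chosen precisely so that the upper Lipschitz bound $b(t,y)-b(t,x)\le K(y-x)$ gets absorbed into the deterministic part of the transformation, leaving a drift that is decreasing in the spatial variable.

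First I would apply integration by parts to $e^{-Kt} X_t$. Writing $\tilde M_t = \int_0^t e^{-Ks}\,dM_s$, which is a local martingale since $e^{-Ks}$ is bounded on compacts and locally integrable against $M$, and setting
\begin{equation*}
\tilde b(t,x) = e^{-Kt}\,b(t,e^{Kt}x) - Kx,
\end{equation*}
one obtains the decomposition
\begin{equation*}
\tilde X_t = \tilde M_t + \int_0^t \tilde b(s,\tilde X_s)\,ds.
\end{equation*}
A direct check using the hypothesis on $b$ shows that for $x<y$,
\begin{equation*}
\tilde b(t,y)-\tilde b(t,x) = e^{-Kt}\bigl(b(t,e^{Kt}y)-b(t,e^{Kt}x)\bigr) - K(y-x) \le K(y-x) - K(y-x) = 0,
\end{equation*}
so $\tilde b(t,\cdot)$ is decreasing, as required by Lemma \ref{lemma:acd gives lip}. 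Local integrability of $\tilde b(s,\tilde X_s)$ follows from that of $b(s,X_s)$ together with local boundedness of $e^{Kt}X_t$ (which is inherited because $X$ is cadlag).

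Next I would verify that $\tilde X$ is itself an almost-continuous diffusion. The map $\phi_t(x) = e^{-Kt}x$ is, for each $t$, a strictly increasing homeomorphism of $\reals$, and as a function of $t$ it is continuous. Hence cadlag paths and continuity in probability carry over from $X$ to $\tilde X$. The \salg s $\sigma(X_s : s\le t)$ and $\sigma(\tilde X_s : s\le t)$ coincide, so the strong Markov property transfers as well: if $\E{g(X_{\tau+t})\mid\setsF_\tau} = f(\tau,X_\tau)$ then $\E{g(e^{K(\tau+t)}\tilde X_{\tau+t})\mid\setsF_\tau} = f(\tau, e^{K\tau}\tilde X_\tau)$, and this gives the Markov functional representation for any bounded measurable test function of $\tilde X_{\tau+t}$. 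Finally, for almost-continuity, if $Y,Z$ are iid copies of $\tilde X$ then $e^{Kt}Y_t$ and $e^{Kt}Z_t$ are iid copies of $X$, and since $e^{Kt}>0$ the inequalities $Y_s<Z_s$, $Y_t>Z_t$, $Y_u\neq Z_u$ are equivalent to the corresponding inequalities for $e^{K\cdot}Y$ and $e^{K\cdot}Z$; thus the almost-continuity condition transfers.

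With these checks in place, Lemma \ref{lemma:acd gives lip} applies to $\tilde X$ and yields the Lipschitz property for $\tilde X = e^{-Kt}X_t$. The only really substantive step is the decreasing-drift computation; everything else is routine verification that the elementary multiplicative time change preserves the structural hypotheses of Definition \ref{defn:acd}. I expect the main point worth explicit care is confirming local integrability of the new drift and that the new local martingale is genuinely a local martingale — both of which follow from bounded-on-compacts properties of $e^{\pm Kt}$ and cadlag paths of $X$.
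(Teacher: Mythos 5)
Your argument is essentially identical to the paper's proof: the same substitution $Y_t=e^{-Kt}X_t$, the same transformed local martingale and drift $c(t,y)=e^{-Kt}b(t,e^{Kt}y)-Ky$, and the same observation that the Lipschitz bound on $b$ makes the new drift decreasing in the spatial variable, reducing everything to Lemma \ref{lemma:acd gives lip}; you additionally spell out the routine check that the transformation preserves the almost-continuous diffusion property, which the paper asserts without comment. The only slip is that your martingale part $\tilde M_t=\int_0^t e^{-Ks}\,dM_s$ should include the initial value (the paper takes $N_t=X_0+\int_0^t e^{-Ks}\,dM_s$) so that the decomposition is correct at $t=0$.
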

\begin{proof}
If we set $Y_t=e^{-Kt}X_t$ then $Y$ is an almost-continuous diffusion and integration by parts gives
\begin{equation*}
Y_t = N_t+\int_0^t c(s,Y_s)\,ds
\end{equation*}
where
\begin{align*}
&N_t = X_0+\int_0^t e^{-Ks}\,dM_s,\\
&c(t,y)=e^{-Kt}b(t,e^{Kt}y)-Ky.
\end{align*}
As $N$ is a local martingale and $c(t,y)$ is decreasing in $y$, the result follows from Lemma \ref{lemma:acd gives lip}.
\end{proof}

In order to show that the limit in Theorem \ref{thm:limit of acd is acd} is strong Markov, we shall show that $e^{-Kt}X_t$ satisfies the Lipschitz property. This works because this property is preserved under taking limits in the sense of finite-dimensional distributions.

\begin{lemma}\label{lemma:lim everywhere of lip prop gives lip}
Let $(\PP_n)_{n\in\nat}$ and $\PP$ be probability measures on $(\D,\setsF)$ such that $\PP_n\rightarrow\PP$ in the sense of finite-dimensional distributions.

If the Lipschitz property for $X$ is satisfied under each $\PP_n$ then it is also satisfied under $\PP$.
\end{lemma}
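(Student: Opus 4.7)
Fix $s<t\in\reals_+$ and a bounded Lipschitz $g:\reals\to\reals$ with $|g'|\le 1$. Under each $\PP_n$ the Lipschitz property yields a 1-Lipschitz $f_n$ with $f_n(X_s)=\EP{\PP_n}{g(X_t)\mid\setsF_s}$ $\PP_n$-a.s., so in particular $|f_n(X_s)|\le\|g\|_\infty$ $\PP_n$-a.s. The strategy is to extract a locally uniform subsequential limit $f_n\to f$ by Arzela--Ascoli, then pass to the limit in the defining identity against a sufficiently rich family of $\setsF_s$-measurable test random variables to identify $f(X_s)$ with $\EP{\PP}{g(X_t)\mid\setsF_s}$.

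\textbf{Extracting a limit.} Since $\PP_n\to\PP$ in finite-dimensional distributions, the laws of $X_s$ under $\PP_n$ converge weakly, hence form a tight family. Choose $R>0$ with $\PP_n(|X_s|\le R)\ge 1/2$ for every $n$. Combined with the a.s. bound $|f_n(X_s)|\le\|g\|_\infty$, the Borel set $\{x\in[-R,R]:|f_n(x)|\le\|g\|_\infty\}$ carries positive mass under the law of $X_s$, so is non-empty; pick $y_n$ in it. Lipschitz continuity then gives $|f_n(x)|\le\|g\|_\infty+R+|x|$ for every $x\in\reals$. Thus $\{f_n\}$ is uniformly bounded on each compact set and uniformly equicontinuous, and a diagonal Arzela--Ascoli argument furnishes a subsequence (still written $f_n$) converging locally uniformly on $\reals$ to a 1-Lipschitz function $f$.

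\textbf{Identifying the limit.} Let $r\in\nat$, $t_1,\dots,t_r\le s$, $u:\reals^r\to\reals$ bounded and continuous, and $v:\reals\to\reals$ continuous with compact support. Setting $U=u(X_{t_1},\dots,X_{t_r})$, the defining identity reads
\[\EP{\PP_n}{U\,v(X_s)\,f_n(X_s)}=\EP{\PP_n}{U\,v(X_s)\,g(X_t)}.\]
Since $v$ has compact support and $f_n\to f$ locally uniformly, $v\cdot f_n\to v\cdot f$ uniformly on $\reals$, so the left-hand integrand converges uniformly to the bounded continuous function $U\,v(X_s)\,f(X_s)$. Splitting into the uniform-approximation part and a weak finite-dimensional convergence part gives $\EP{\PP}{U\,v(X_s)\,f(X_s)}$ in the limit; the right-hand side converges directly to $\EP{\PP}{U\,v(X_s)\,g(X_t)}$. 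To remove $v$ by dominated convergence I first bound $f(X_s)$ under $\PP$: using $f_n(X_s)\le\|g\|_\infty$ $\PP_n$-a.s.\ gives $(f(X_s)-\|g\|_\infty)_+\le|f(X_s)-f_n(X_s)|$ $\PP_n$-a.s., and multiplying by a compactly supported cutoff $\psi_R(X_s)$ and using uniform convergence of $f_n$ on $\supp\psi_R$ shows $\EP{\PP_n}{(f(X_s)-\|g\|_\infty)_+\psi_R(X_s)}\to 0$; finite-dimensional weak convergence then forces $\EP{\PP}{(f(X_s)-\|g\|_\infty)_+\psi_R(X_s)}=0$ for every $R$, so $f(X_s)\le\|g\|_\infty$ $\PP$-a.s., and the symmetric argument gives the lower bound. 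Dominated convergence as $v\uparrow 1$ upgrades the identity to $\EP{\PP}{U f(X_s)}=\EP{\PP}{U g(X_t)}$, and a monotone class argument extends this to all bounded $\setsF_s$-measurable $U$, yielding $f(X_s)=\EP{\PP}{g(X_t)\mid\setsF_s}$ $\PP$-a.s.

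\textbf{Main obstacle.} The crux is the uniform local bound on $f_n$. The Lipschitz property controls $f_n$ only a.s.\ on the law of $X_s$ under $\PP_n$, and without some common region of $\reals$ on which all the $f_n(y)$ are simultaneously bounded there is no hope of applying Arzela--Ascoli. Tightness of the laws of $X_s$, which comes for free from weak finite-dimensional convergence, is precisely what pins the functions down; the remaining work is a routine truncate--and--dominate calculation.
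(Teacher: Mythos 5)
Your proof is correct, but it takes a genuinely different route from the paper's. The paper uses no compactness argument: it fixes a point $x$ in the support $S$ of $X_s$ under the limit law $\PP$, tests the identity $f_n(X_s)=\EP{\PP_n}{g(X_t)\mid\setsF_s}$ against a continuous bump $\theta$ concentrated near $x$ with $\EP{\PP}{\theta(X_s)}>0$, and deduces directly that $(f_n(x))_{n}$ is Cauchy; this yields pointwise convergence of the \emph{whole} sequence on $S$, after which $f$ is extended off $S$ by linear interpolation and the limit passage is completed by showing $\EP{\PP_n}{|f_n(X_s)-f(X_s)|}\rightarrow 0$. You instead extract a subsequential locally uniform limit by Arzel\`a--Ascoli --- legitimate here, since only the existence of one admissible $f$ is needed and the subsequence still converges in finite-dimensional distributions --- and then identify the limit by testing against $U\,v(X_s)$ with $v$ compactly supported. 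Both arguments confront the same essential difficulty, namely that the defining identity controls $f_n$ only on the support of the law of $X_s$ under $\PP_n$, which need not contain the support under $\PP$, and both resolve it by localizing with compactly supported test functions and then removing the cutoff; your almost-sure bound $|f(X_s)|\le\|g\|_\infty$ under $\PP$ plays the role of the paper's truncation at level $A$ in its proof that $\EP{\PP_n}{|f_n(X_s)-f(X_s)|}\rightarrow0$. Your route is softer but requires the extra preliminary step of a uniform local bound on the $f_n$, which you correctly obtain from tightness of the laws of $X_s$ and which the paper gets for free by taking $|f_n|\le\|g\|_\infty$ globally (a harmless truncation); the paper's Cauchy argument buys convergence of the full sequence on $S$, which is more than the lemma actually requires.
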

\begin{proof}
Fix any $s<t\in\reals_+$ and any bounded and Lipschitz continuous $g:\reals\rightarrow\reals$ such that $|g|\le K$ and $|g^\prime|\le 1$. Then, by the Lipschitz property for $X$ under $\PP_n$, there exist Lipschitz continuous functions $f_n:\reals\rightarrow\reals$ such that $|f_n|\le K$, $|f^\prime_n|\le 1$ and
\begin{equation*}
f_n(X_s)=\EP{\PP_n}{g(X_t)|\setsF_s}.
\end{equation*}
Now let $S\subseteq\reals$ be the support of $X_s$ under $\PP$. We shall show that $f_n$ converges pointwise on $S$ as $n\rightarrow\infty$.
So, pick any $x\in S$ and any $\epsilon>0$. Let $\theta:\reals\rightarrow\reals$ be any continuous and non-negative function with support contained in $[x-\epsilon,x+\epsilon]$ such that $\theta(x)>0$.
As $x\in S$ we have
\begin{equation*}
\EP{\PP}{\theta(X_s)}=\delta>0.
\end{equation*}
We use the following simple identity
\begin{equation*}\begin{split}
\delta\left(f_n(x)-f_m(x)\right)={}&\EP{\PP_n}{\theta(X_s)f_n(x)}-\EP{\PP_m}{\theta(X_s)f_m(x)}\\
&-\left(\EP{\PP_n}{\theta(X_s)}-\EP{\PP}{\theta(X_s)}\right)f_n(x)\\
&+\left(\EP{\PP_m}{\theta(X_s)}-\EP{\PP}{\theta(X_s)}\right)f_m(x)
\end{split}\end{equation*}
Convergence of the distribution of $X_s$ under $\PP_n$ to its distribution under $\PP$ (as $n\rightarrow\infty$) tells us that the final two terms on the right hand side of this inequality vanish as we take limits, so
\begin{equation}\label{eqn:proof:lim everywhere of lip prop gives lip:1}\begin{split}
&\delta\limsup_{m,n\rightarrow\infty}\left|f_n(x)-f_m(x)\right|\\
&\le\limsup_{m,n\rightarrow\infty}\left|\EP{\PP_n}{\theta(X_s)f_n(x)}-\EP{\PP_m}{\theta(X_s)f_m(x)}\right|.
\end{split}\end{equation}
Also, Lipschitz continuity of $f_n$ on the interval $[x-\epsilon,x+\epsilon]$ gives
\begin{equation*}
\left|\theta(X_s)\left(f_n(X_s)-f_n(x)\right)\right|\le\epsilon\theta(X_s)
\end{equation*}
so
\begin{equation*}\begin{split}
&\left|\EP{\PP_n}{\theta(X_s)f_n(x)}-\EP{\PP_m}{\theta(X_s)f_m(x)}\right|\\
\le{}&
\left|\EP{\PP_n}{\theta(X_s)f_n(X_s)}-\EP{\PP_m}{\theta(X_s)f_m(X_s)}\right|\\
&+ \epsilon\EP{\PP_n}{\theta(X_s)}+\epsilon\EP{\PP_m}{\theta(X_s)}\\
={}&  \left|\EP{\PP_n}{\theta(X_s)g(X_t)}-\EP{\PP_m}{\theta(X_s)g(X_t)}\right|\\
&+ \epsilon\EP{\PP_n}{\theta(X_s)}+\epsilon\EP{\PP_m}{\theta(X_s)}.
\end{split}\end{equation*}
If we take limits as $m,n\rightarrow\infty$ then the convergence of the finite-dimensional distributions of $\PP_n$ and $\PP_m$ to $\PP$ shows that the right hand side of this inequality converges to $2\epsilon\delta$,
\begin{equation*}
\limsup_{m,n\rightarrow\infty}\left|\EP{\PP_n}{\theta(X_s)f_n(x)}-\EP{\PP_m}{\theta(X_s)f_m(x)}\right|\le2\epsilon\delta.
\end{equation*}
Substituting this into inequality (\ref{eqn:proof:lim everywhere of lip prop gives lip:1}) gives
\begin{equation*}
\limsup_{m,n\rightarrow\infty}\left|f_n(x)-f_m(x)\right|\le 2\epsilon.
\end{equation*}
As this is true for every $\epsilon>0$, the sequence $f_n(x)$ is Cauchy, and therefore converges as $n$ goes to infinity. So we can define $f:S\rightarrow\reals$ by $f(x)=\lim_{n\rightarrow\infty}f_n(x)$. Then
\begin{equation}\label{eqn:proof:lim everywhere of lip prop gives lip:2}
|f(x)-f(y)|=\lim_{n\rightarrow\infty}|f_n(x)-f_n(y)|\le |x-y|
\end{equation}
for every $x,y\in S$. So $f$ is Lipschitz continuous on $S$. By interpolating and extrapolating $f$ linearly across the connected open components of $\reals\setminus S$, we can extend it to a function $f:\reals\rightarrow\reals$ such that inequality (\ref{eqn:proof:lim everywhere of lip prop gives lip:2}) is satisfied for all $x,y\in\reals$. So, $f$ is Lipschitz continuous with $|f^\prime|\le 1$. Also, we can choose $f$ such that $|f|\le K$.
To complete the proof of the lemma, it only remains to show that $f(X_s)=\EP{\PP}{g(X_t)|\setsF_s}$.

Now set
\begin{equation*}
h(x)=\limsup_{n\rightarrow\infty}|f_n(x)-f(x)|
\end{equation*}
so that $h$ vanishes on $S$, and is a bounded Lipschitz continuous function satisfying $|h|\le 2K$ and $|h^\prime|\le2$. By uniform continuity of the functions $f_n$, the convergence is uniform on bounded subsets of $\reals$. That is, for every $A>0$,
\begin{equation*}
|f_n(x)-f(x)|\le h(x) + 1/A
\end{equation*}
for all large $n$ and $|x|\le A$. So,
\begin{equation*}\begin{split}
&\limsup_{n\rightarrow\infty}\EP{\PP_n}{|f_n(X_s)-f(X_s)|}\\
\le{}& \limsup_{n\rightarrow\infty}\EP{\PP_n}{h(X_s)}+1/A+2K\limsup_{n\rightarrow\infty}\PP_n\left(|X_s|>A\right)\\
\le{}& \EP{\PP}{h(X_s)}+1/A+2K\PP\left(|X_s|\ge A\right)\\
={}& 1/A +2K\PP\left(|X_s|\ge A\right).
\end{split}\end{equation*}
Letting $A$ increase to infinity gives
\begin{equation}\label{eqn:proof:lim everywhere of lip prop gives lip:3}
\limsup_{n\rightarrow\infty}\EP{\PP_n}{|f_n(X_s)-f(X_s)|}=0.
\end{equation}

Finally, choose any finite set of times $t_1,t_2,\ldots,t_d\in[0,s]$, choose any bounded and continuous $u:\reals^d\rightarrow\reals$, and let $U$ be the $\setsF_s$-measurable random variable
\begin{equation*}
U=u\left(X_{t_1},X_{t_2},\ldots,X_{t_d}\right)
\end{equation*}
Then, letting $L$ be an upper bound for $|u|$, we can use the equality $f_n(X_s)=\EP{\PP_n}{g(X_t)|\setsF_s}$ and equation (\ref{eqn:proof:lim everywhere of lip prop gives lip:3}) to get
\begin{equation*}\begin{split}
\left|\EP{\PP}{U\left(f(X_s)-g(X_t)\right)}\right| ={}& \lim_{n\rightarrow\infty}\left|\EP{\PP_n}{U(f(X_s)-g(X_t))}\right|\\
\le{}&\limsup_{n\rightarrow\infty}\left|\EP{\PP_n}{U(f_n(X_s)-g(X_t))}\right|\\
&+L\limsup_{n\rightarrow\infty}\EP{\PP_n}{\left|f_n(X_s)-f(X_s)\right|}\\
={}&0.
\end{split}\end{equation*}
Therefore, $\EP{\PP}{Uf(X_s)}=\EP{\PP}{Ug(X_t)}$. By the monotone class lemma this extends to all bounded and $\setsF_s$-measurable $U$, so $f(X_s)=\EP{\PP}{g(X_t)|\setsF_s}$.
\end{proof}

This result is for convergence everywhere of the finite-dimensional distributions. It is easy to extend it to only require convergence on a dense subset of $\reals_+$.

\begin{corollary}\label{cor:lim of lip prop gives lip}
Let $(\PP_n)_{n\in\nat}$ and $\PP$ be probability measures on $(\D,\setsF)$ such that $\PP_n\rightarrow\PP$ in the sense of finite-dimensional distributions on a dense subset of $\reals_+$.

If the Lipschitz property for $X$ is satisfied under each $\PP_n$ then it is also satisfied under $\PP$.
\end{corollary}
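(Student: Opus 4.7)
The plan is to mirror the proof of Corollary \ref{cor:cond2 preserved under dense lim}, reducing to Lemma \ref{lemma:lim everywhere of lip prop gives lip} by means of a discretization. Let $S\subseteq\reals_+$ be a dense subset on which $\PP_n\to\PP$ in the sense of finite-dimensional distributions. For each $m\in\nat$ pick $t_{m,k}\in S$ with $(k-1)/m\le t_{m,k}<k/m$, and set
\begin{equation*}
\theta_m(t)=\min\left\{t_{m,k}:k\in\nat,\,t_{m,k}>t\right\},\qquad X^m_t=X_{\theta_m(t)}.
\end{equation*}
Let $\QQ_{n,m}$ and $\QQ_m$ denote the measures on $(\D,\setsF)$ under which the coordinate process has the same law as $X^m$ has under $\PP_n$ and under $\PP$, respectively.

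The key step is to verify that the Lipschitz property for $X$ is preserved under the time change $X\mapsto X^m$, i.e., that it holds under each $\QQ_{n,m}$. Fix $s<t\in\reals_+$ and a bounded Lipschitz $g$ with $|g'|\le 1$. If $\theta_m(s)<\theta_m(t)$, then both times lie in $S$, and the Lipschitz property of $X$ under $\PP_n$ supplies a Lipschitz $f$ with $|f'|\le 1$ such that $f(X_{\theta_m(s)})=\EP{\PP_n}{g(X_{\theta_m(t)})\mid\setsF_{\theta_m(s)}}$. Since the natural filtration of $X^m$ up to time $s$ is contained in $\setsF_{\theta_m(s)}$, tower conditioning yields $f(X^m_s)=\EP{\PP_n}{g(X^m_t)\mid\setsF^m_s}$. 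In the degenerate case $\theta_m(s)=\theta_m(t)$ we have $X^m_s=X^m_t$ and may simply take $f=g$.

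Next, because the finite-dimensional distributions of $X^m$ only sample $X$ at times in $S$, the hypothesis gives $\QQ_{n,m}\to\QQ_m$ in the sense of finite-dimensional distributions on all of $\reals_+$ as $n\to\infty$, so Lemma \ref{lemma:lim everywhere of lip prop gives lip} transfers the Lipschitz property to $\QQ_m$. Finally, since $0\le\theta_m(t)-t\le 2/m$ uniformly in $t$, the right-continuity of paths in $\D$ forces $X^m_t\to X_t$ pointwise as $m\to\infty$, whence $\QQ_m\to\PP$ in the sense of finite-dimensional distributions on all of $\reals_+$. A second application of Lemma \ref{lemma:lim everywhere of lip prop gives lip} completes the proof. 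I expect no serious obstacle; the only point of real care is the preservation-under-time-change step, and in particular separately handling the degenerate case $\theta_m(s)=\theta_m(t)$ where the conditional expectation is trivial.
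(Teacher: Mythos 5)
Your proposal is correct and follows essentially the same route as the paper: the paper's proof also reuses the measures $\QQ_{n,m}$ and $\QQ_m$ from Corollary \ref{cor:cond2 preserved under dense lim} and applies Lemma \ref{lemma:lim everywhere of lip prop gives lip} twice, simply asserting as ``clear'' the preservation of the Lipschitz property under the time change that you verify explicitly (including the degenerate case $\theta_m(s)=\theta_m(t)$). Your added detail on that step is a correct filling-in, not a deviation.
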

\begin{proof}
We imply this result from Lemma \ref{lemma:lim everywhere of lip prop gives lip} in the same way that Corollary \ref{cor:cond2 preserved under dense lim} followed from Lemma \ref{lemma:cond2 preserved under lim}. So, let $\QQ_{n,m}$ and $\QQ_m$ be the probability measures on $(\D,\setsF)$ defined in the proof of Corollary \ref{cor:cond2 preserved under dense lim}.

It is clear that the Lipschitz property for $X$ under $\PP_n$ implies that it also satisfies the Lipschitz property under $\QQ_{n,m}$. Then Lemma \ref{lemma:lim everywhere of lip prop gives lip} applied to the limit (\ref{proof:cond2 preserved under dense lim:1}) says that $X$ satisfies the Lipschitz property under $\QQ_m$. Applying Lemma \ref{lemma:lim everywhere of lip prop gives lip} to the limit (\ref{proof:cond2 preserved under dense lim:2}) gives the result.
\end{proof}

We finally prove Theorem \ref{thm:limit of acd is acd}.
\begin{proof}
First, Lemma \ref{lemma:lim of acd is ac} says that $X$ is almost-continuous under the measure $\PP$.
Also, Corollary \ref{cor:acd with K gives lip} says that $e^{-Kt}X_t$ satisfies the Lipschitz property under $\PP_n$, so by Corollary \ref{cor:lim of lip prop gives lip} it also satisfies the Lipschitz property under $\PP$. Lemma \ref{lemma:lipschitz prop gives sm} then says that $e^{-Kt}X_t$ is a strong Markov process under $\PP$, and therefore $X$ is also a strong Markov process.
\end{proof}

\appendix

\bibliography{limits.bbl}
\bibliographystyle{acmtrans-ims}

\end{document}